\documentclass[11pt]{article}
       \usepackage{amsfonts}
       \usepackage{stmaryrd}
       \usepackage{latexsym,amssymb,mathrsfs,fancyhdr}
       \font\tenmsb=msbm10
       \font\sevenmsb=msbm7
       \font\fivemsb=msbm5
       \catcode`\@=11
       \ifx\amstexloaded@\relax\catcode`\@=\active
       \endinput\else\let\amstexloaded@\relax\fi
       \def\spaces@{\space\space\space\space\space}
       \def\spaces@@{\spaces@\spaces@\spaces@\spaces@\spaces@}
       \def\space@.  {\futurelet\space@\relax}
       \space@.   %
       \def\Err@#1{\errhelp\defaulthelp@\errmessage{AmS-TeX error: #1}}
       \def\relaxnext@{\let\next\relax}
       \def\accentfam@{7}
       \def\noaccents@{\def\accentfam@{0}}
       \def\Cal{\relaxnext@\ifmmode\let\next\Cal@\else
       \def\next{\Err@{Use \string\Cal\space only in math mode}}\fi\next}
       \def\Cal@#1{{\Cal@@{#1}}}
       \def\Cal@@#1{\noaccents@\fam\tw@#1}
       \def\Bbb{\relaxnext@\ifmmode\let\next\Bbb@\else
       \def\next{\Err@{Use \string\Bbb\space only in math mode}}\fi\next}
       \def\Bbb@#1{{\Bbb@@{#1}}}
       \def\Bbb@@#1{\noaccents@\fam\msbfam#1}
       \newfam\msbfam
       \textfont\msbfam=\tenmsb
       \scriptfont\msbfam=\sevenmsb
       \scriptscriptfont\msbfam=\fivemsb
\usepackage{dsfont}
\usepackage[german,english]{babel}
\usepackage{amsmath,amssymb}
\usepackage[square, comma, sort&compress, numbers]{natbib}
\usepackage{cases}
\usepackage{mathrsfs}
\usepackage{amsmath}
\usepackage{amsthm}
\usepackage{amsfonts}
\usepackage{amssymb}
\usepackage{latexsym}
\usepackage{fancyhdr}
\usepackage{extarrows}
\usepackage{geometry}
\usepackage{color}
\geometry{left=3.2cm,right=2.8cm,top=4cm,bottom=4cm}

\usepackage[comma,numbers,square,sort&compress]{natbib}
\usepackage{epstopdf}
\usepackage{graphicx,amsmath}
\usepackage{subfigure}
\usepackage{amssymb}
\theoremstyle{plain}
\newtheorem{theorem}{Theorem}[section]
\newtheorem{corollary}[theorem]{Corollary}
\newtheorem{lemma}[theorem]{Lemma}
\newtheorem{example}[theorem]{Example}

\numberwithin{equation}{section}
\newtheorem{remark}[theorem]{Remark}

\theoremstyle{definition}
\newtheorem{definition}[theorem]{Definition}

\large\normalsize

\usepackage{enumerate}

\begin{document}
\numberwithin{equation}{section}
\title{{\bf Further characterizations and  representations of the Minkowski inverse in Minkowski space}}


\author{{\ Jiale Gao $^{a}$, \ Qingwen Wang $^{b}$, \ Kezheng Zuo $^{a}$\footnote{E-mail address: xiangzuo28@163.com (K. Zuo).}, \ Jiabao Wu $^{a}$}
\\{{\small $^a$ School of Mathematics and Statistics, Hubei Normal University, Huangshi, 435002, PR China}}
\\ {\small $^b$ College of Science, Shanghai University, Shanghai, 200444, PR China}}

\maketitle

\begin{center}
\begin{minipage}{135mm}
{{\small {\bf Abstract:}  This paper is aimed to  identify some new characterizations and  representations of the  Minkowski inverse in Minkowski space. First of all, a few representations of $\{1,3^{\mathfrak{m}}\}$,
$\{1,2,3^{\mathfrak{m}}\}$, $\{1,4^{\mathfrak{m}}\}$ and $\{1,2,4^{\mathfrak{m}}\}$-inverses are given in order to represent the Minkowski inverse. Secondly, some famous characterizations of the Moore-Penrose inverse are extended to that of the Minkowski inverse. Thirdly, using the Hartwig-Spindelb\"{o}ck decomposition we present  a representation of the Minkowski inverse. And, based on this result,   an interesting characterization of the Minkowski inverse is showed   by a rank equation. Finally, we obtain several new representations of the Minkowski inverse in a more general form, by which the Minkowski inverse of a class of block matrices is given.
\begin{description}
\item[{\bf Key words}:]  Minkowski inverse; Minkowski space; Hartwig-Spindelb\"{o}ck decomposition; Full rank factorization
\item[{\bf AMS subject classifications}:]15A03; 15A09; 15A24 
\end{description}
}}
\end{minipage}
\end{center}

\section{Introduction}\label{introductionsection}

In order to easily test that a Mueller matrix maps the forward light cone into itself in studying polarized hight, Renardy   \cite{minSVDre} explored the singular value decomposition in Minkowski space. Subsequently, Meenakshi \cite{minMPre}
defined the Minkowski inverse in Minkowski space,  and  gave a condition for a Mueller matrix to have a singular value decomposition in terms of its Minkowski inverse. Since this article came out, the generalized inverses in Minkowski space have attracted considerable attention. Zekraoui et al. \cite{minMPfurre} derived   some new  algebraic and topological properties of the Minkowski inverse. Meenakshi \cite{minEPre} introduced the concept of a range symmetric matrix  in Minkowski space, which was further studied by \cite{minkEPre,minMPEPre,minEPblockre}. The Minkowski inverse has been  widely used in many applications such as the anti-reflexive  solutions of matrix equations \cite{appequre} and  matrix partial orderings \cite{partordmEPre,partordre}. The weighted Minkowski inverse defined by \cite{WminMPorire} is a generalization of the Minkowski inverse, and many of its properties, representations and approximations were established in \cite{WminMPfurre,WminMPorire,WminMPiterre}. Recently, Wang et al.  introduced the $\mathfrak{m}$-core inverse \cite{mincorere}, the $\mathfrak{m}$-core-EP inverse \cite{mincoreEPre} and the $\mathfrak{m}$-WG inverse \cite{minwgroupre} in Minkowski space, which are viewed as  generalizations of the core inverse,   the core-EP inverse  and   the weak group inverse, respectively.
\par
It is well known that the Moore-Penrose inverse \cite{penrosere} not only  plays an irreplaceable role in solving linear matrix equations, but also  is
 a generally accepted tool in statistics,  extreme-value  problems and other scientific disciplines. Moreover, this inverse pervades  a great number  of mathematical fields: $C^*$-algebras, rings, Hilbert spaces, Banach spaces,  categories, tensors, and the quaternion skew field. The algebraic properties, characterizations, representations, perturbation theory, and iterative computations of the   Moore-Penrose inverse have been  extensively investigated.
For more details on the study of the Moore-Penrose inverse refer to books \cite{ringbookre,benbookre,cambookre,drabookre,Ban02re,wangbookre}.
\par
Although the Minkowski inverse in Minkowski space can be regarded as an extension of the Moore-Penrose inverse, there are many differences between these two classes of generalized inverses, especially in their existence conditions (see \cite{WminMPfurre,minMPre,minMPfurre}). So, it's natural to ask what interesting results  for Minkowski inverse can be drawn by considering  some known conclusions of the Moore-Penrose inverse.
\par
Mainly inspired by \cite{solution,ZlobecMPref,Centralizerref,syl},  we summarize the main topics of this work as below:
\begin{itemize}
  \item A few characterizations and representations of $\{1,3^{\mathfrak{m}}\}$,
$\{1,2,3^{\mathfrak{m}}\}$, $\{1,4^{\mathfrak{m}}\}$ and $\{1,2,4^{\mathfrak{m}}\}$-inverses are showed.
  \item We apply the solvability of matrix equation, the nonsingularity of matrices, the existence of projectors, and the index of matrices to characterize the existence of the Minkowski inverse, which extends  some classic characterizations of the Moore-Penrose inverse in $\mathbb{C}^{m\times n}$ with usual Hermitian adjoint and in a ring with   involution. And, we show various representations of the  Minkowski inverse in different cases.

  \item Using the Hartwig-Spindelb\"{o}ck decomposition, we present a new representation of the Minkowski inverse. Based on this result, an interesting characterization of the Minkowski inverse is presented by a rank equation.

\item   Motivated by the Zlobec formula of the Moore-Penrose inverse, we give a more general representation  of the Minkowski inverse, and   apply  it to compute the Minkowski inverse of a class of block matrices.
\end{itemize}
\par
This paper is organized as follows. Section \ref{Nottermsec} presents notations and terminology. In  Section \ref{Preliminariessec},   some necessary lemmas are given. We  devote Section \ref{character1234sec} to the  characterizations of  $\{1,3^{\mathfrak{m}}\}$,
$\{1,2,3^{\mathfrak{m}}\}$, $\{1,4^{\mathfrak{m}}\}$ and $\{1,2,4^{\mathfrak{m}}\}$-inverses.
Some classic  properties of the Moore-Penrose inverse are extended to the case of the Minkowski inverse in Section \ref{charactermimMPsec}.
 In Section \ref{characterminMPfursec}, we further extend several   characterizations of the Moore-Penrose inverse in a ring  to the Minkowski inverse.
 We characterize the Minkowski   inverse by a rank equation in Section \ref{characterrankminMPsec}. Section \ref{generalizingZminMPsec} focuses  on showing a few  new representations of the Minkowski inverse.

\section{Notations and terminology}\label{Nottermsec}
Throughout this paper, we adopt the following notations and terminology. Let $\mathbb{C}^{n}$, $\mathbb{C}^{m\times n}$, and  $\mathbb{C}^{m\times n}_r$ be the sets of all complex $n$-dimensional vectors,  complex  ${m\times n}$ matrices, and complex  ${m\times n}$ matrices with rank $r$, respectively.
The symbols $A^*$, $\mathcal{R}(A)$, $\mathcal{N}(A)$, ${\rm rank}( A )$,  $A^{-1}_{R}$, and  $A^{-1}_{L}$ stand for the conjugate transpose, the range, the null space, the rank,  a right inverse, and a left inverse of $A\in\mathbb{C}^{ m \times n}$, respectively.
The index of $A\in\mathbb{C}^{n\times n}$, denoted by ${\rm Ind}(A)$, is the smallest nonnegative integer $t$ satisfying ${\rm rank}(A^{t+1})={\rm rank}(A^{t})$.
And, $A^0=I_n$ for $A\in\mathbb{C}^{n\times n}$, where $I_n$ is the identity matrix in $\mathbb{C}^{n\times n}$. We denote the dimension and the orthogonal complementary subspace of  a subspace $\mathcal{L}\subseteq \mathbb{C}^{n}$ by ${\rm dim}(\mathcal{L})$  and $\mathcal{L}^{\perp}$, respectively.
By $P_{\mathcal{S},\mathcal{T}}$ we denote the projector  onto $\mathcal{S}$ along $\mathcal{T}$, where two subspaces $\mathcal{S},\mathcal{T}\subseteq\mathbb{C}^{n}$ satisfy that the direct sum of $\mathcal{S}$ and $\mathcal{T}$ is $\mathbb{C}^{n}$, i.e., $\mathcal{S}\oplus \mathcal{T}=\mathbb{C}^{n}$. In particular, $P_{\mathcal{S}}=P_{\mathcal{S},\mathcal{S}^{\perp}}$.
\par
The Moore-Penrose inverse \cite{penrosere}   of $A\in\mathbb{C}^{m\times n}$ is the unique matrix $X\in\mathbb{C}^{n\times m}$ verifying
\begin{equation*}
   AXA=A,  XAX=X,  (AX)^*=AX, (XA)^*=XA,
\end{equation*}
and is denoted by $A^{\dag}$. The group inverse \cite{grouporgre} of $A\in\mathbb{C}^{n\times n}$ is the unique matrix $X\in\mathbb{C}^{n\times n}$ satisfying
\begin{equation*}
  AXA=X, XAX=X, AX=XA,
\end{equation*}
and is denoted by $A^{\#}$.
For $A\in\mathbb{C}^{m\times n}$, if there is a matrix $X\in\mathbb{C}^{n \times m}$ satisfying
\begin{equation*}
XAX=X, \mathcal{R}(X)=\mathcal{T},\mathcal{N}(X)=\mathcal{S},
\end{equation*}
where $\mathcal{T}\subseteq\mathbb{C}^n$ and $\mathcal{S}\subseteq\mathbb{C}^m$ are two subspaces, then $X$ is unique and is denoted  by $A_{\mathcal{T},\mathcal{S}}^{(2)}$ \cite{benbookre,wangbookre}. Particularly, if $AA_{\mathcal{T},\mathcal{S}}^{(2)}A=A$,  we denote $A_{\mathcal{T},\mathcal{S}}^{(1,2)}=A_{\mathcal{T},\mathcal{S}}^{(2)}$.
\par
Additionally, the Minkowski inner product  \cite{minMPre,minSVDre} of
two elements $x$ and $y$ in $\mathbb{C}^{n}$
is defined by $(x,y)=<x,Gy>$, where
$
G=\left(
    \begin{array}{cc}
      1 & 0 \\
      0 & -I_{n-1} \\
    \end{array}
  \right)
$ represents the Minkowski metric matrix, and  $<\cdot,\cdot>$ is the conventional Euclidean inner product. The complex linear space $\mathbb{C}^{n}$  with Minkowski inner product is called the Minkowski space. Notice  that the Minkowski space is also an  indefinite inner product space \cite{indef,indef2324re}. The Minkowski adjoint of $A\in\mathbb{C}^{m\times n}$ is $A^{\sim}=GA^*F$, where $G$ and $F$ are Minkowski metric matrices of orders $n$ and $m$, respectively.

\begin{definition}\cite{minMPre,indef2324re}\label{minMPdefintion}
Let $A\in\mathbb{C}^{m\times n}$.
\begin{enumerate}[$(1)$]
       \item If there exists $X\in\mathbb{C}^{n\times m}$ such that the following equations
\begin{equation*}
  (1)AXA=A, (2)XAX=X, (3^{\mathfrak{m}})(AX)^{\sim}=AX, (4^{\mathfrak{m}})(XA)^{\sim}=XA,
\end{equation*}
then $X$ is called the Minkowski inverse of $A$, and is denoted by $A^{\mathfrak{m}}$.
\item If $X\in\mathbb{C}^{n\times m}$ satisfies equations $(i),(j),...,(k)$ from among  equations $(1)$--$(4^{\mathfrak{m}})$, then $X$ is called a $\{i,j,...,k\}$-inverse  of $A$, and is denoted by $A^{(i,j,...,k)}$. The set of all $\{i,j,...,k\}$-inverses of $A$ is denoted by $A\{i,j,...,k\}$.
\end{enumerate}
\end{definition}

\section{Preliminaries}\label{Preliminariessec}
This section  begins with  recalling existence conditions and some basic properties of the Minkowski inverse, which will be useful in the later discussion.

\begin{lemma}\cite[Theorem 1]{minMPre}\label{minMPexistconlem}
Let $A\in\mathbb{C}^{m\times n}$. Then $A^{\mathfrak{m}}$ exists if and only if ${\rm rank}(AA^{\sim})={\rm rank}(A^{\sim}A)={\rm rank}(A)$.
\end{lemma}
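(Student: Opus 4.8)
The plan is to treat the two implications separately, after first recording the basic algebra of the Minkowski adjoint that makes everything run. Since $G$ and $F$ are real diagonal with $\pm 1$ entries, they satisfy $G^{*}=G$, $F^{*}=F$, $G^{2}=I_{n}$ and $F^{2}=I_{m}$; from $A^{\sim}=GA^{*}F$ one then reads off, by direct substitution, the involution law $(A^{\sim})^{\sim}=A$, the reverse-order law $(AB)^{\sim}=B^{\sim}A^{\sim}$ (the inner metric factors cancel because $G^{2}=I_{n}$ and $F^{2}=I_{m}$), and the rank identity ${\rm rank}(A^{\sim})={\rm rank}(A)$ (because $G,F$ are invertible). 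These three facts are the only structural inputs I expect to need.

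For necessity, suppose $X=A^{\mathfrak{m}}$ exists and apply $\sim$ to the defining equation $A=AXA$, using the reverse-order law in two different groupings. Writing $A=(AX)A$ gives $A^{\sim}=A^{\sim}(AX)^{\sim}=A^{\sim}(AX)=A^{\sim}AX$ by $(3^{\mathfrak{m}})$, so every column of $A^{\sim}$ lies in the column space of $A^{\sim}A$ and hence ${\rm rank}(A^{\sim})\le{\rm rank}(A^{\sim}A)$. Writing instead $A=A(XA)$ gives $A^{\sim}=(XA)^{\sim}A^{\sim}=(XA)A^{\sim}=XAA^{\sim}$ by $(4^{\mathfrak{m}})$, whence ${\rm rank}(A^{\sim})\le{\rm rank}(AA^{\sim})$. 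Since the reverse inequalities ${\rm rank}(A^{\sim}A)\le{\rm rank}(A^{\sim})$ and ${\rm rank}(AA^{\sim})\le{\rm rank}(A^{\sim})$ hold automatically and ${\rm rank}(A^{\sim})={\rm rank}(A)$, all three ranks coincide, which is exactly the asserted condition.

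For sufficiency, I would pass to a full-rank factorization $A=BC$ with $B\in\mathbb{C}^{m\times r}$ of full column rank and $C\in\mathbb{C}^{r\times n}$ of full row rank, where $r={\rm rank}(A)$. Then $A^{\sim}A=GC^{*}(B^{*}FB)C$ and $AA^{\sim}=B(CGC^{*})B^{*}F$, and because left multiplication by a full-column-rank matrix and right multiplication by a full-row-rank matrix both preserve rank, one gets ${\rm rank}(A^{\sim}A)={\rm rank}(B^{*}FB)$ and ${\rm rank}(AA^{\sim})={\rm rank}(CGC^{*})$. Hence the hypothesis is equivalent to the two $r\times r$ matrices $U:=CGC^{*}$ and $V:=B^{*}FB$ being invertible. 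I would then propose the explicit candidate $X=GC^{*}U^{-1}V^{-1}B^{*}F$ and verify the four defining equations directly: a short computation using $CGC^{*}=U$ and $B^{*}FB=V$ collapses $AX$ to $BV^{-1}B^{*}F$ and $XA$ to $GC^{*}U^{-1}C$, from which $(1)$ and $(2)$ follow at once. For $(3^{\mathfrak{m}})$ and $(4^{\mathfrak{m}})$ the decisive observation is that $U$ and $V$ are Hermitian (since $G,F$ are), so $U^{-1},V^{-1}$ are Hermitian too; then $(AX)^{\sim}=F(AX)^{*}F=BV^{-1}B^{*}F=AX$ and likewise $(XA)^{\sim}=G(XA)^{*}G=GC^{*}U^{-1}C=XA$, using $F^{2}=I_{m}$ and $G^{2}=I_{n}$.

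The conceptual crux, and the only place where the indefinite metric genuinely intervenes, is the sufficiency reduction: unlike the Euclidean Gram matrices $A^{*}A$ and $AA^{*}$, the metric Gram matrices $B^{*}FB$ and $CGC^{*}$ need not be invertible, and the rank hypothesis is precisely what restores their invertibility and hence legitimizes the explicit formula. Verifying $(3^{\mathfrak{m}})$ and $(4^{\mathfrak{m}})$ is then mechanical once the Hermitian symmetry of $U$ and $V$ is noted; the only real care needed is in bookkeeping the metric orders, with $G$ of order $n$ and $F$ of order $m$, so that each application of $\sim$ is taken with respect to the correct pair of metrics.
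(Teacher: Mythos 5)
Your proof is correct, and it is worth noting that the paper itself offers no proof of this statement at all: Lemma \ref{minMPexistconlem} is quoted from \cite[Theorem 1]{minMPre}, so there is no in-paper argument to compare line by line, and your write-up is genuinely self-contained added content. Your necessity direction (applying $\sim$ to $A=AXA$ in the two groupings, invoking $(3^{\mathfrak{m}})$ resp. $(4^{\mathfrak{m}})$, and using ${\rm rank}(A^{\sim})={\rm rank}(A)$) is the standard argument and is watertight. For sufficiency, your candidate $X=GC^{*}(CGC^{*})^{-1}(B^{*}FB)^{-1}B^{*}F$ is in fact exactly the representation the paper quotes as Lemma \ref{minMPfullreprelemma}, namely $A^{\mathfrak{m}}=C^{\sim}(CC^{\sim})^{-1}(B^{\sim}B)^{-1}B^{\sim}$: writing $G_{r}$ for the order-$r$ Minkowski metric, one has $C^{\sim}=GC^{*}G_{r}$ and $B^{\sim}=G_{r}B^{*}F$, so $CC^{\sim}=(CGC^{*})G_{r}$ and $B^{\sim}B=G_{r}(B^{*}FB)$, and the four copies of $G_{r}$ cancel to give your formula. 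The difference in logical role matters, though: Lemma \ref{minMPfullreprelemma} \emph{presupposes} that $A^{\mathfrak{m}}$ exists, whereas you verify the four defining equations directly from the invertibility of $U=CGC^{*}$ and $V=B^{*}FB$, which is precisely what a proof of existence requires; your observation that the rank hypothesis is equivalent to the invertibility of these two indefinite Gram matrices (the point at which the Minkowski setting genuinely departs from the Euclidean one, where $U,V$ are automatically positive definite) is the crux, and your Hermitian-symmetry check of $U$ and $V$ correctly disposes of $(3^{\mathfrak{m}})$ and $(4^{\mathfrak{m}})$. Your phrasing in terms of $G$ and $F$ alone also neatly sidesteps any bookkeeping with the order-$r$ metric. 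The only loose end is the degenerate case $r=0$, where a full-rank factorization is not available; there the statement is trivial since $A=0$ gives $A^{\mathfrak{m}}=0$ and all ranks vanish, so a one-line remark suffices.
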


\begin{lemma}\cite[Theorem 8]{minMPfurre}\label{minMPfullreprelemma}
Let $A\in\mathbb{C}^{m\times n}_r$ and $r > 0$, and let $A=BC$ be a full rank factorization of $A$,
where $B\in\mathbb{C}^{m\times r}_r$ and $C\in\mathbb{C}^{r\times n}_r$.
If $A^{\mathfrak{m}}$ exists,  then $A^{\mathfrak{m}}=C^{\sim}(CC^{\sim})^{-1}(B^{\sim}B)^{-1}B^{\sim}$.
\end{lemma}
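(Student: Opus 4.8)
The plan is to verify directly that the candidate matrix $X:=C^{\sim}(CC^{\sim})^{-1}(B^{\sim}B)^{-1}B^{\sim}$ satisfies the four defining equations $(1)$--$(4^{\mathfrak{m}})$ of the Minkowski inverse and then to invoke its uniqueness. Before doing so, I would record the elementary algebraic properties of the Minkowski adjoint on which the computation rests. Since each Minkowski metric matrix $G$ is Hermitian and involutory ($G^*=G=G^{-1}$), one checks that $\sim$ is an involutive anti-homomorphism, i.e.\ $(UV)^{\sim}=V^{\sim}U^{\sim}$ and $(A^{\sim})^{\sim}=A$, that it preserves rank, ${\rm rank}(A^{\sim})={\rm rank}(A)$, and that $(M^{\sim})^{-1}=(M^{-1})^{\sim}$ whenever $M$ is invertible. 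In particular $A^{\sim}=(BC)^{\sim}=C^{\sim}B^{\sim}$, so that $AA^{\sim}=B(CC^{\sim})B^{\sim}$ and $A^{\sim}A=C^{\sim}(B^{\sim}B)C$.

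The step I expect to be the main obstacle---and the place where the hypothesis that $A^{\mathfrak{m}}$ exists genuinely enters---is showing that the two $r\times r$ matrices $CC^{\sim}$ and $B^{\sim}B$ are invertible. Unlike the classical Moore--Penrose situation, where $CC^*$ and $B^*B$ are automatically nonsingular for full-rank factors, the indefinite metric prevents this from being automatic here. To handle it I would use that $B$ has full column rank $r$ and $C$ has full row rank $r$, whence $B^{\sim}$ has full row rank and $C^{\sim}$ has full column rank by the rank-preservation property. Multiplying on the appropriate side by a full-rank factor does not change rank, so from the factorizations above I obtain ${\rm rank}(CC^{\sim})={\rm rank}(B(CC^{\sim})B^{\sim})={\rm rank}(AA^{\sim})$ and, likewise, ${\rm rank}(B^{\sim}B)={\rm rank}(A^{\sim}A)$. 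By Lemma~\ref{minMPexistconlem}, the existence of $A^{\mathfrak{m}}$ forces ${\rm rank}(AA^{\sim})={\rm rank}(A^{\sim}A)=r$, so both $CC^{\sim}$ and $B^{\sim}B$ have rank $r$ and are therefore invertible; this makes $X$ well defined.

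With invertibility in hand, the remaining verifications are routine telescoping cancellations. Writing $P:=(CC^{\sim})^{-1}$ and $Q:=(B^{\sim}B)^{-1}$, I would compute $AXA=B(CC^{\sim})PQ(B^{\sim}B)C=BC=A$, and $XAX=X$ by the same collapsing of $P(CC^{\sim})$ and $Q(B^{\sim}B)$ to identities. For the two adjoint-symmetry conditions I would first simplify $AX=BQB^{\sim}$ and $XA=C^{\sim}PC$, then apply the anti-homomorphism and double-adjoint identities together with the self-adjointness of $B^{\sim}B$ and $CC^{\sim}$ (hence of $Q$ and $P$) to get $(AX)^{\sim}=(B^{\sim})^{\sim}Q^{\sim}B^{\sim}=BQB^{\sim}=AX$ and, similarly, $(XA)^{\sim}=XA$. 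Since $X$ satisfies all of $(1)$--$(4^{\mathfrak{m}})$ and the Minkowski inverse is unique, this yields $A^{\mathfrak{m}}=X$, as claimed.
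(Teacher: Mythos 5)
Your proof is correct and complete. Note that the paper itself gives no proof of this statement: it is imported as a lemma by citation to Theorem 8 of \cite{minMPfurre}, so there is no in-paper argument to compare against. Your route---direct verification of $(1)$--$(4^{\mathfrak{m}})$ for the candidate matrix followed by uniqueness---is the standard one, and you correctly isolate the only nontrivial point, namely that the existence hypothesis enters through Lemma \ref{minMPexistconlem} to give ${\rm rank}(CC^{\sim})={\rm rank}(AA^{\sim})=r$ and ${\rm rank}(B^{\sim}B)={\rm rank}(A^{\sim}A)=r$, hence the invertibility that fails to be automatic under an indefinite metric.
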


\begin{lemma}\label{mMPprole}\cite[Theorem 9]{indef}
Let $A\in\mathbb{C}^{m\times n}$ with ${\rm rank}(AA^{\sim})={\rm rank}(A^{\sim}A)={\rm rank}(A)$. Then
\begin{enumerate}[$(1)$]
\item\label{mMPRN} $\mathcal{R}(A^{\mathfrak{m}})=\mathcal{R}(A^{\sim})$ and $\mathcal{N}(A^{\mathfrak{m}})=\mathcal{N}(A^{\sim})$;
  \item\label{AAmMPeq} $AA^{\mathfrak{m}}=P_{\mathcal{R}(A),\mathcal{N}(A^{\sim})}$;
  \item\label{AmAMPeq} $A^{\mathfrak{m}}A=P_{\mathcal{R}(A^{\sim}),\mathcal{N}(A)}$.
\end{enumerate}
\end{lemma}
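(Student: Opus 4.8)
The plan is to work directly from the four defining equations $(1)$--$(4^{\mathfrak{m}})$ of $A^{\mathfrak{m}}$, whose existence under the stated rank hypothesis is guaranteed by Lemma \ref{minMPexistconlem}. Two structural facts about the Minkowski adjoint underlie everything. First, since the metric matrices are involutory (hence invertible), $A^{\sim}=GA^*F$ has ${\rm rank}(A^{\sim})={\rm rank}(A)$. Second, the reverse-order law $(XY)^{\sim}=Y^{\sim}X^{\sim}$ holds, which I would verify at the outset from $A^{\sim}=GA^*F$ together with $G^2=I$ and $F^2=I$. I would also record the rank identity ${\rm rank}(A^{\mathfrak{m}})={\rm rank}(A)$, which is immediate from $AA^{\mathfrak{m}}A=A$ and $A^{\mathfrak{m}}AA^{\mathfrak{m}}=A^{\mathfrak{m}}$.

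For item \ref{mMPRN}, I would first use $A^{\mathfrak{m}}=A^{\mathfrak{m}}AA^{\mathfrak{m}}$ to obtain $\mathcal{R}(A^{\mathfrak{m}})\subseteq\mathcal{R}(A^{\mathfrak{m}}A)\subseteq\mathcal{R}(A^{\mathfrak{m}})$, hence $\mathcal{R}(A^{\mathfrak{m}})=\mathcal{R}(A^{\mathfrak{m}}A)$. Applying equation $(4^{\mathfrak{m}})$ and the reverse-order law gives $A^{\mathfrak{m}}A=(A^{\mathfrak{m}}A)^{\sim}=A^{\sim}(A^{\mathfrak{m}})^{\sim}$, so $\mathcal{R}(A^{\mathfrak{m}})=\mathcal{R}(A^{\mathfrak{m}}A)\subseteq\mathcal{R}(A^{\sim})$; since both subspaces have dimension ${\rm rank}(A^{\mathfrak{m}})={\rm rank}(A)={\rm rank}(A^{\sim})$, the inclusion is forced to be an equality. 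The null-space claim is dual: $A^{\mathfrak{m}}=A^{\mathfrak{m}}AA^{\mathfrak{m}}$ yields $\mathcal{N}(A^{\mathfrak{m}})=\mathcal{N}(AA^{\mathfrak{m}})$, and equation $(3^{\mathfrak{m}})$ gives $AA^{\mathfrak{m}}=(AA^{\mathfrak{m}})^{\sim}=(A^{\mathfrak{m}})^{\sim}A^{\sim}$, whence $\mathcal{N}(A^{\sim})\subseteq\mathcal{N}(A^{\mathfrak{m}})$; a matching dimension count ($m-{\rm rank}(A)$ on both sides) closes the argument.

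For items \ref{AAmMPeq} and \ref{AmAMPeq} I would show that each of $AA^{\mathfrak{m}}$ and $A^{\mathfrak{m}}A$ is idempotent and then identify its range and kernel. From $AA^{\mathfrak{m}}A=A$ one gets $(AA^{\mathfrak{m}})^2=AA^{\mathfrak{m}}$ and $(A^{\mathfrak{m}}A)^2=A^{\mathfrak{m}}A$, so both are projectors. For $AA^{\mathfrak{m}}$, the relation $AA^{\mathfrak{m}}A=A$ forces $\mathcal{R}(AA^{\mathfrak{m}})=\mathcal{R}(A)$, while $\mathcal{N}(AA^{\mathfrak{m}})=\mathcal{N}(A^{\mathfrak{m}})=\mathcal{N}(A^{\sim})$ by the computation above and item \ref{mMPRN}; since a projector is determined by its range and null space, this gives $AA^{\mathfrak{m}}=P_{\mathcal{R}(A),\mathcal{N}(A^{\sim})}$. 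Symmetrically, $A^{\mathfrak{m}}AA^{\mathfrak{m}}=A^{\mathfrak{m}}$ gives $\mathcal{R}(A^{\mathfrak{m}}A)=\mathcal{R}(A^{\mathfrak{m}})=\mathcal{R}(A^{\sim})$, and $AA^{\mathfrak{m}}A=A$ gives $\mathcal{N}(A^{\mathfrak{m}}A)=\mathcal{N}(A)$, yielding $A^{\mathfrak{m}}A=P_{\mathcal{R}(A^{\sim}),\mathcal{N}(A)}$.

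The routine manipulations (idempotency and the easy inclusions) are immediate; the only points needing genuine care are the reverse inclusions in item \ref{mMPRN}, where an a priori inclusion into $\mathcal{R}(A^{\sim})$ or $\mathcal{N}(A^{\sim})$ must be upgraded to equality. I expect this to be the main obstacle, and the clean way around it is the rank bookkeeping ${\rm rank}(A^{\mathfrak{m}})={\rm rank}(A)={\rm rank}(A^{\sim})$, which relies essentially on the invertibility of the Minkowski metric matrices rather than on any Euclidean orthogonality.
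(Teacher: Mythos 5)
Your proposal is correct, but there is nothing in the paper to compare it against: the paper does not prove this lemma at all, it simply quotes it as \cite[Theorem 9]{indef} (Kamaraj and Sivakumar). Your argument supplies a valid self-contained proof. The two structural facts you isolate are exactly right and are what make the Moore--Penrose-style bookkeeping go through in the Minkowski setting: since $G$ and $F$ are involutory, $A^{\sim}=GA^*F$ has ${\rm rank}(A^{\sim})={\rm rank}(A)$ and the reverse-order law $(XY)^{\sim}=Y^{\sim}X^{\sim}$ holds, after which the rank hypothesis enters only through Lemma \ref{minMPexistconlem} (existence of $A^{\mathfrak{m}}$) and everything else follows from equations $(1)$--$(4^{\mathfrak{m}})$. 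Your handling of the one delicate point --- upgrading $\mathcal{R}(A^{\mathfrak{m}})\subseteq\mathcal{R}(A^{\sim})$ and $\mathcal{N}(A^{\sim})\subseteq\mathcal{N}(A^{\mathfrak{m}})$ to equalities via ${\rm rank}(A^{\mathfrak{m}})={\rm rank}(A)={\rm rank}(A^{\sim})$ --- is sound, and the identification of the idempotents $AA^{\mathfrak{m}}$ and $A^{\mathfrak{m}}A$ with the stated oblique projectors is then immediate, since an idempotent automatically splits the space as the direct sum of its range and null space. This is essentially the standard argument one would give for the analogous properties of the Moore--Penrose inverse, transported correctly to the indefinite metric; no gaps.
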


\begin{remark}\label{minMPATS2rem}
Under the hypotheses of Lemma \ref{mMPprole}, we immediately  have
 \begin{equation}\label{minMPATS2}
A^{\mathfrak{m}}=A^{(1,2)}_{\mathcal{R}(A^{\sim}),\mathcal{N}(A^{\sim})}.
\end{equation}
\end{remark}
\par
Furthermore, we recall an important application of $\{1\}$-inverses to solve matrix equations.

\begin{lemma}\cite[Theorem 1.2.5]{wangbookre}\label{AXDsolle}
Let  $A\in\mathbb{C}^{m\times n}$, $B\in\mathbb{C}^{p\times q}$ and $D\in\mathbb{C}^{m\times q}$. Then there is a solution $X\in\mathbb{C}^{n\times p}$ to the matrix equation $AXB=D$ if and only if,  for some $A^{(1)}\in A\{1\}$ and $B^{(1)}\in B\{1\}$,  $AA^{(1)}DB^{(1)}B=D$, in which case, the general solution is
\begin{equation*}
  X=A^{(1)}DB^{(1)}+(I_n-A^{(1)}A)Y+Z(I_p-BB^{(1)}),
\end{equation*}
 where $A^{(1)}\in A\{1\}$ and $B^{(1)}\in B\{1\}$ are fixed but arbitrary, and $Y\in\mathbb{C}^{n\times p}$ and $Z\in\mathbb{C}^{n\times p}$ are  arbitrary.
\end{lemma}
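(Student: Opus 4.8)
The plan is to separate the statement into two parts: first the consistency criterion (the stated ``if and only if''), and then the description of the general solution as a particular solution plus the general homogeneous solution. Throughout I would use nothing beyond the defining identities of a $\{1\}$-inverse, namely $AA^{(1)}A=A$ and $BB^{(1)}B=B$, so the argument stays entirely at the level of matrix manipulation.

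For necessity of the consistency condition, I would suppose $X$ solves $AXB=D$ and substitute $D=AXB$ into $AA^{(1)}DB^{(1)}B$. Regrouping gives $AA^{(1)}(AXB)B^{(1)}B=(AA^{(1)}A)X(BB^{(1)}B)=AXB=D$. Since this computation is valid for every choice of $A^{(1)}\in A\{1\}$ and $B^{(1)}\in B\{1\}$, the condition in fact holds for \emph{all} $\{1\}$-inverses once it holds for one, which is worth recording even though the statement only asks for ``some''. For sufficiency, I would simply exhibit $X_0=A^{(1)}DB^{(1)}$ as a particular solution: by hypothesis $AX_0B=AA^{(1)}DB^{(1)}B=D$.

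For the general solution, the first step is to verify that every matrix of the displayed form solves the equation. Multiplying out $A\big(A^{(1)}DB^{(1)}+(I_n-A^{(1)}A)Y+Z(I_p-BB^{(1)})\big)B$, the first term contributes $D$ by the consistency hypothesis, while $A(I_n-A^{(1)}A)=A-AA^{(1)}A=0$ annihilates the second term and $(I_p-BB^{(1)})B=B-BB^{(1)}B=0$ annihilates the third. The converse direction, that every solution is captured by this parametrization, is the step I expect to be the main obstacle, since it requires producing explicit parameters $Y,Z$. Here the idea is to pass to the homogeneous equation: if $X$ is any solution, then $X-X_0$ satisfies $A(X-X_0)B=0$, so it suffices to represent an arbitrary homogeneous solution. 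For such a solution, choosing $Y=X-X_0$ and $Z=A^{(1)}A(X-X_0)$ works, because $(I_n-A^{(1)}A)W+A^{(1)}AW(I_p-BB^{(1)})=W-A^{(1)}(AWB)B^{(1)}=W$ whenever $AWB=0$, the last equality using $AWB=0$. Combining with the particular solution $X_0=A^{(1)}DB^{(1)}$ then yields the claimed form, and the arbitrariness of $Y,Z$ is immediate.
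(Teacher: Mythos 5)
Your proposal is correct, and it is the standard argument for this result: consistency via substituting $D=AXB$ into $AA^{(1)}DB^{(1)}B$, the particular solution $A^{(1)}DB^{(1)}$, and the homogeneous-equation step with the explicit choices $Y=X-X_0$, $Z=A^{(1)}A(X-X_0)$. Note that the paper itself offers no proof of this lemma --- it is quoted verbatim from \cite[Theorem 1.2.5]{wangbookre} --- and your argument is essentially the one given in that reference (going back to Penrose), so there is nothing to fault here.
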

\par
Two significant results of $A_{\mathcal{T},\mathcal{S}}^{(2)}$ are reviewed in order to show existence conditions of the Minkowski inverse in Section \ref{charactermimMPsec} and to represent the Minkowski inverse in Section \ref{generalizingZminMPsec}, respectively.

\begin{lemma}\cite[Theorem 2.1]{rankatsre}\label{ATS2Gchale}
Let $A\in\mathbb{C}^{m\times n}_r$, and let
two subspaces $\mathcal{T}\subseteq \mathbb{C}^{n}$  and $\mathcal{S}\subseteq \mathbb{C}^{m}$  be such that ${\rm dim}(\mathcal{T})\leq r$ and ${\rm dim}(\mathcal{S})= m-{\rm dim}(\mathcal{T})$.
 Suppose that $H\in\mathbb{C}^{n\times m}$ is such that $\mathcal{R}(H)=\mathcal{T}$  and $\mathcal{N}(H)=\mathcal{S}$. If $A_{\mathcal{T},\mathcal{S}}^{(2)}$ exists, then ${\rm Ind}(AH)={\rm Ind}(HA)=1$. Further, we have $A_{\mathcal{T},\mathcal{S}}^{(2)}=(HA)^{\#}H=H(AH)^{\#}$.
\end{lemma}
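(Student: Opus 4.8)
The plan is to set $X=A^{(2)}_{\mathcal T,\mathcal S}$ and $s=\dim(\mathcal T)$, and to prove the two asserted formulas by checking that each candidate matrix $Y$ satisfies the three defining relations of the $\{2\}$-inverse with prescribed range and null space, namely $YAY=Y$, $\mathcal R(Y)=\mathcal T$ and $\mathcal N(Y)=\mathcal S$, after which uniqueness forces $Y=X$. The first thing I would extract is structural information from the mere existence of $X$. Since $XAX=X$, both $AX$ and $XA$ are idempotent, and the implication ``$AXv=0\Rightarrow Xv=XAXv=0$'' gives $\mathcal N(AX)=\mathcal N(X)=\mathcal S$, while $\mathcal R(AX)=A\mathcal R(X)=A\mathcal T$. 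As $AX$ is a projector, this yields the key decomposition $A\mathcal T\oplus\mathcal S=\mathbb C^{m}$, and the dimension count $\dim(A\mathcal T)=m-\dim(\mathcal S)=s=\dim(\mathcal T)$ shows that $A$ is injective on $\mathcal T$. Note also that $\mathcal R(H)=\mathcal R(X)=\mathcal T$ and $\mathcal N(H)=\mathcal N(X)=\mathcal S$, so $H$ has rank $s$.

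Next I would pin down the range and null space of $AH$ and $HA$, which is where the real work lies. For $AH$ one has $\mathcal R(AH)=A\mathcal T$ at once, while $\mathcal N(AH)=\mathcal S$: if $AHx=0$ then $Hx\in\mathcal N(A)\cap\mathcal T=\{0\}$ by injectivity of $A$ on $\mathcal T$, so $Hx=0$ and $x\in\mathcal N(H)=\mathcal S$. Hence $\mathcal R(AH)\oplus\mathcal N(AH)=A\mathcal T\oplus\mathcal S=\mathbb C^{m}$, which is exactly the condition $\mathrm{Ind}(AH)\le 1$ guaranteeing that $(AH)^{\#}$ exists. For $HA$ a rank count using $\mathcal R(A)=A\mathcal T\oplus(\mathcal R(A)\cap\mathcal S)$ gives $\mathrm{rank}(HA)=r-(r-s)=s$, so $\mathcal R(HA)=\mathcal T$; and the same injectivity argument shows $\mathcal T\cap\mathcal N(HA)=\{0\}$, whence $\mathcal R(HA)\oplus\mathcal N(HA)=\mathbb C^{n}$ and $(HA)^{\#}$ exists. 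Both indices equal $1$ rather than $0$ precisely because $\mathcal S=\mathcal N(H)\neq\{0\}$ forces $AH$ and $HA$ to be singular. I expect this step to be the main obstacle, since it is the place where the hypotheses $\dim(\mathcal T)\le r$, $\dim(\mathcal S)=m-\dim(\mathcal T)$ and the existence of $X$ all get converted into the two direct-sum decompositions.

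Finally I would verify the formulas using the standard group-inverse identities $MM^{\#}M=M$, $M^{\#}MM^{\#}=M^{\#}$, $M^{\#}M=MM^{\#}=P_{\mathcal R(M),\mathcal N(M)}$, $\mathcal R(M^{\#})=\mathcal R(M)$ and $\mathcal N(M^{\#})=\mathcal N(M)$. Put $Y=H(AH)^{\#}$ and $M=AH$. Then $YAY=HM^{\#}(AH)M^{\#}=HM^{\#}MM^{\#}=HM^{\#}=Y$; moreover $\mathcal R(Y)=H\mathcal R(M^{\#})=H\mathcal R(AH)=(HA)\mathcal T=\mathcal T$, the last equality because $HA$ maps $\mathcal T$ injectively onto $\mathcal R(HA)=\mathcal T$. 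Thus $\mathrm{rank}(Y)=s$, and since $\mathcal N(Y)\supseteq\mathcal N(M^{\#})=\mathcal S$ with $\dim(\mathcal S)=m-s$ we conclude $\mathcal N(Y)=\mathcal S$; by uniqueness $Y=X$. Symmetrically, for $Z=(HA)^{\#}H$ with $N=HA$ one gets $ZAZ=N^{\#}(HA)N^{\#}H=N^{\#}H=Z$, while $ZA=N^{\#}HA=N^{\#}N=P_{\mathcal T,\mathcal N(HA)}$ has rank $s$, forcing $\mathrm{rank}(Z)=s$, $\mathcal R(Z)=\mathcal T$ and (using $\mathcal N(Z)\supseteq\mathcal N(H)=\mathcal S$) $\mathcal N(Z)=\mathcal S$. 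Uniqueness again yields $Z=X$, completing the identification $A^{(2)}_{\mathcal T,\mathcal S}=(HA)^{\#}H=H(AH)^{\#}$.
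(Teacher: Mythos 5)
You should note first that the paper contains no proof of this lemma at all: it is imported verbatim from Wei \cite[Theorem 2.1]{rankatsre} and used as a black box, so there is no internal argument to compare yours against; your proposal has to stand on its own, and in all essentials it does. The chain you build is sound: idempotency of $AX$ plus $\mathcal{N}(AX)=\mathcal{N}(X)=\mathcal{S}$ and $\mathcal{R}(AX)=A\mathcal{T}$ gives $A\mathcal{T}\oplus\mathcal{S}=\mathbb{C}^m$ and injectivity of $A$ on $\mathcal{T}$; the modular-law decomposition $\mathcal{R}(A)=A\mathcal{T}\oplus(\mathcal{R}(A)\cap\mathcal{S})$ correctly yields $\mathcal{R}(AH)=A\mathcal{T}$, $\mathcal{N}(AH)=\mathcal{S}$, $\mathcal{R}(HA)=\mathcal{T}$, $\mathcal{T}\cap\mathcal{N}(HA)=\{0\}$, hence both range--null-space direct sums and the existence of $(AH)^{\#}$ and $(HA)^{\#}$; and the final verification that $H(AH)^{\#}$ and $(HA)^{\#}H$ each satisfy $YAY=Y$, $\mathcal{R}(Y)=\mathcal{T}$, $\mathcal{N}(Y)=\mathcal{S}$, followed by uniqueness of $A^{(2)}_{\mathcal{T},\mathcal{S}}$, is exactly the standard (and correct) way to identify both expressions with $A^{(2)}_{\mathcal{T},\mathcal{S}}$.

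The one soft spot is your parenthetical justification that the indices equal $1$ ``rather than $0$'' because $\mathcal{S}=\mathcal{N}(H)\neq\{0\}$. The hypotheses do not guarantee this: if $r=m$ and $\dim(\mathcal{T})=m$, then $\mathcal{S}=\{0\}$ and $AH$ is nonsingular, so ${\rm Ind}(AH)=0$; similarly $HA$ is nonsingular when $\dim(\mathcal{T})=n=r$, even with $\mathcal{S}\neq\{0\}$. What your argument genuinely proves is ${\rm Ind}(AH)\le 1$ and ${\rm Ind}(HA)\le 1$, i.e., that both group inverses exist. That is not a defect you could have repaired, because in those degenerate cases the literal claim ``${\rm Ind}=1$'' is false; it is a looseness inherited from the cited statement itself, and it recurs in the host paper (e.g., item $(2)$ of Theorem \ref{IndchaAmMPth} fails literally for $A=I_n$, where ${\rm Ind}(A^{\sim}A)=0$). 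Since every later use of the lemma — the derivation of $A^{\mathfrak{m}}=(A^{\sim}A)^{\#}A^{\sim}=A^{\sim}(AA^{\sim})^{\#}$ and of \eqref{indAAeq}, and the proof of Theorem \ref{IndchaAmMPth} — needs only the existence of the group inverses and the two formulas, reading ``${\rm Ind}=1$'' as ``${\rm Ind}\le 1$'' makes your proof complete; it would be cleaner to state that reading explicitly rather than to assert $\mathcal{S}\neq\{0\}$.
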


\begin{lemma}[Urquhart formula, \cite{Urquhartref}]\label{Urquhartth}
Let $A\in\mathbb{C}^{m\times n}$, $U\in\mathbb{C}^{n\times p}$, $V\in\mathbb{C}^{q\times m}$, and
\begin{equation*}
X=U(VAU)^{(1)}V,
\end{equation*}
where $(VAU)^{(1)}\in(VAU)\{1\}$. Then
$X=A^{(1,2)}_{\mathcal{R}(U),\mathcal{N}(V)}$ if and only if ${\rm rank}(VAU)={\rm rank}(U)={\rm rank}(V)={\rm rank}(A)$.
\end{lemma}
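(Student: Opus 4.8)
The plan is to prove the two implications separately. Write $B=VAU\in\mathbb{C}^{q\times p}$, fix $W=(VAU)^{(1)}\in B\{1\}$ (so $BWB=B$), and set $X=UWV$. Throughout I would lean on the elementary product-rank bound ${\rm rank}(VAU)\leq\min\{{\rm rank}(U),{\rm rank}(V),{\rm rank}(A)\}$, which shows that the hypothesis ${\rm rank}(VAU)={\rm rank}(U)={\rm rank}(V)={\rm rank}(A)=:r$ is a maximality condition; the real work is to convert these rank equalities into subspace identities and back.

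For the sufficiency ($\Leftarrow$), I would first record the consequences of the rank equalities: from ${\rm rank}(AU)=r={\rm rank}(A)$ together with $\mathcal{R}(AU)\subseteq\mathcal{R}(A)$ I get $\mathcal{R}(AU)=\mathcal{R}(A)$; from ${\rm rank}(VA)=r={\rm rank}(A)$ together with $\mathcal{N}(A)\subseteq\mathcal{N}(VA)$ I get $\mathcal{N}(VA)=\mathcal{N}(A)$, whence $V$ is injective on $\mathcal{R}(A)$; from ${\rm rank}(AU)=r={\rm rank}(U)$ I get $\mathcal{N}(AU)=\mathcal{N}(U)$, which forces $\mathcal{R}(U)\cap\mathcal{N}(A)=\{0\}$ and hence $\mathcal{R}(U)\oplus\mathcal{N}(A)=\mathbb{C}^{n}$; and from ${\rm rank}(VAU)={\rm rank}(V)$ I get $\mathcal{R}(B)=\mathcal{R}(V)$. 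The rule $XAX=X$ is then immediate: since $BW$ is idempotent with $\mathcal{R}(BW)=\mathcal{R}(B)=\mathcal{R}(V)$ it fixes the columns of $V$, i.e. $BWV=V$, so $XAX=UW(VAU)WV=UW(BW)V=UWV=X$. For $AXA=A$, put $M=AXA-A$; a direct computation gives $VMU=BWB-B=0$, and since $\mathcal{R}(M)\subseteq\mathcal{R}(A)$ while $V$ is injective on $\mathcal{R}(A)$, this forces $MU=0$; as $M=(AUWV-I_m)A$ also vanishes on $\mathcal{N}(A)$ and $\mathcal{R}(U)\oplus\mathcal{N}(A)=\mathbb{C}^{n}$, I conclude $M=0$. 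Finally $\mathcal{R}(X)\subseteq\mathcal{R}(U)$ together with ${\rm rank}(X)\geq{\rm rank}(AXA)={\rm rank}(A)=r={\rm rank}(U)$ gives $\mathcal{R}(X)=\mathcal{R}(U)$, and $\mathcal{N}(V)\subseteq\mathcal{N}(X)$ with matching dimension gives $\mathcal{N}(X)=\mathcal{N}(V)$; the uniqueness of $A^{(1,2)}_{\mathcal{T},\mathcal{S}}$ then yields $X=A^{(1,2)}_{\mathcal{R}(U),\mathcal{N}(V)}$.

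For the necessity ($\Rightarrow$), I would start from $X=A^{(1,2)}_{\mathcal{R}(U),\mathcal{N}(V)}$, so $AXA=A$, $XAX=X$, $\mathcal{R}(X)=\mathcal{R}(U)$ and $\mathcal{N}(X)=\mathcal{N}(V)$. The two Penrose-type identities give ${\rm rank}(X)={\rm rank}(A)=r$, hence ${\rm rank}(U)=\dim\mathcal{R}(X)=r$ and ${\rm rank}(V)=m-\dim\mathcal{N}(X)=r$. It remains to prove ${\rm rank}(VAU)=r$. For this I would use that for a $\{1,2\}$-inverse $AX$ is the projector $P_{\mathcal{R}(A),\mathcal{N}(X)}$, so $\mathcal{R}(A)\oplus\mathcal{N}(X)=\mathbb{C}^{m}$ and in particular $\mathcal{R}(A)\cap\mathcal{N}(V)=\{0\}$; combined with $\mathcal{R}(AU)=\mathcal{R}(A)$ (which follows from $\mathcal{R}(A)=\mathcal{R}(AXA)\subseteq\mathcal{R}(AU)\subseteq\mathcal{R}(A)$), the identity ${\rm rank}(VAU)={\rm rank}(AU)-\dim(\mathcal{R}(AU)\cap\mathcal{N}(V))=r$ closes the argument.

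I expect the main obstacle to be the first Penrose identity $AXA=A$ in the sufficiency direction: unlike $XAX=X$, it cannot be read off from a single idempotent and genuinely requires both the injectivity of $V$ on $\mathcal{R}(A)$ and the complementation $\mathcal{R}(U)\oplus\mathcal{N}(A)=\mathbb{C}^{n}$, i.e. the full strength of all four rank equalities simultaneously. Its dual, the lower bound ${\rm rank}(VAU)\geq r$ in the necessity direction, is the analogous delicate point, which I would handle through the projector $AX=P_{\mathcal{R}(A),\mathcal{N}(X)}$ and the resulting triviality of $\mathcal{R}(A)\cap\mathcal{N}(V)$.
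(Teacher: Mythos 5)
Your proof is correct in both directions, but there is no internal proof to compare it against: the paper states Lemma \ref{Urquhartth} with a citation to Urquhart \cite{Urquhartref} and omits the proof entirely, so your argument functions as a self-contained replacement for that citation. Checking the details: in the sufficiency direction, the four rank equalities do give $\mathcal{R}(AU)=\mathcal{R}(A)$, $\mathcal{R}(A)\cap\mathcal{N}(V)=\{0\}$, $\mathcal{R}(U)\oplus\mathcal{N}(A)=\mathbb{C}^{n}$ and $\mathcal{R}(VAU)=\mathcal{R}(V)$; your identity $XAX=UW(BWV)=UWV$ via the idempotent $BW$ fixing $\mathcal{R}(V)$ is sound, and the two-step derivation of $AXA=A$ (first $VMU=0$ forces $MU=0$ because $\mathcal{R}(M)\subseteq\mathcal{R}(A)$ and $\mathcal{R}(A)\cap\mathcal{N}(V)=\{0\}$, then $M=0$ because $M$ also kills $\mathcal{N}(A)$ and $\mathcal{R}(U)\oplus\mathcal{N}(A)=\mathbb{C}^{n}$) handles the genuinely delicate point correctly; the identifications $\mathcal{R}(X)=\mathcal{R}(U)$, $\mathcal{N}(X)=\mathcal{N}(V)$ and the appeal to uniqueness of $A^{(2)}_{\mathcal{T},\mathcal{S}}$ finish it. In the necessity direction, the projector argument giving $\mathcal{R}(A)\cap\mathcal{N}(V)=\{0\}$, together with $\mathcal{R}(AU)=\mathcal{R}(A)$ and the formula $\mathrm{rank}(VAU)=\mathrm{rank}(AU)-\dim\left(\mathcal{R}(AU)\cap\mathcal{N}(V)\right)$, correctly recovers $\mathrm{rank}(VAU)=\mathrm{rank}(A)$, which is the only nontrivial equality there. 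It is worth noting that your toolkit --- product-rank bounds, the dimension formula for the rank of a product, idempotents, and uniqueness of outer inverses with prescribed range and null space --- is exactly the toolkit this paper uses elsewhere (the same rank-of-product formula appears in the proof of Theorem \ref{mMPexitscondition}), so your proof is stylistically consistent with the surrounding text and could be inserted verbatim as a proof of the lemma.
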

\par
The following three auxiliary lemmas are critical to conclude results of Section \ref{characterrankminMPsec}.

\begin{lemma}[Hartwig-Spindelb\"{o}ck decomposition, \cite{HSdec}]\label{HSth} Let $A\in\mathbb{C}^{n\times n}_r$. Then $A$ can be represented in the form
\begin{equation}\label{HSAdec}
A=U\left(
     \begin{array}{cc}
       \Sigma K & \Sigma L \\
       0 & 0 \\
     \end{array}
   \right)U^*,
\end{equation}
where $U\in\mathbb{C}^{n\times n}$ is unitary,
$\Sigma ={\rm diag}(\sigma_1,\sigma_2,...,\sigma_r)$ is the diagonal matrix of singular values of $A$, $\sigma_i>0(i=1,2,...,r)$, and $K\in\mathbb{C}^{r\times r}$ and $L\in\mathbb{C}^{r\times (n-r)}$ satisfy
\begin{equation}\label{HSKLconditioneq}
KK^*+LL^*=I_r.
\end{equation}
\end{lemma}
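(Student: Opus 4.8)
The plan is to derive the decomposition directly from the singular value decomposition (SVD) of $A$, the essential trick being to force the \emph{same} unitary factor to appear on both the left and the right. Since $A\in\mathbb{C}^{n\times n}_r$, its SVD can be written as
$$A=W\begin{pmatrix}\Sigma & 0\\ 0 & 0\end{pmatrix}V^*,$$
where $W,V\in\mathbb{C}^{n\times n}$ are unitary and $\Sigma=\mathrm{diag}(\sigma_1,\dots,\sigma_r)$ collects the $r$ nonzero singular values. The difficulty is that $W$ and $V$ are in general distinct, whereas the target form \eqref{HSAdec} uses one unitary $U$ on both sides; overcoming this mismatch is the crux of the argument.

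To resolve it, I would set $U:=W$ and introduce the unitary matrix $Q:=U^*V$ (a product of unitaries, hence unitary). Then $V^*=(U^*V)^*U^*=Q^*U^*$, so that
$$A=U\begin{pmatrix}\Sigma & 0\\ 0 & 0\end{pmatrix}Q^*U^*.$$
The entire discrepancy between the left and right unitary factors has now been absorbed into the single unitary matrix $Q^*$ sandwiched in the middle, which is exactly what makes the one-sided form possible.

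Next I would partition $Q^*$ conformally with the block sizes $r$ and $n-r$, denoting its first block row by $(K\ \ L)$ with $K\in\mathbb{C}^{r\times r}$ and $L\in\mathbb{C}^{r\times(n-r)}$. Since the lower block row of the middle diagonal matrix vanishes and its upper block row is $(\Sigma\ \ 0)$, multiplying by $Q^*$ retains only the top block row of $Q^*$ (scaled by $\Sigma$) and kills the rest, so the product collapses to
$$\begin{pmatrix}\Sigma & 0\\ 0 & 0\end{pmatrix}Q^*=\begin{pmatrix}\Sigma K & \Sigma L\\ 0 & 0\end{pmatrix},$$
yielding precisely the claimed expression \eqref{HSAdec}.

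Finally, the norm condition \eqref{HSKLconditioneq} follows from the unitarity of $Q^*$: reading off the leading $r\times r$ diagonal block of the identity $Q^*(Q^*)^*=I_n$ says exactly that the first block row $(K\ \ L)$ consists of orthonormal rows, i.e. $KK^*+LL^*=I_r$. I expect the only genuinely delicate point to be the bookkeeping in the first two steps—committing to $U=W$ rather than $U=V$ and verifying the rewriting $V^*=Q^*U^*$—after which everything is forced by block multiplication and the elementary fact that the rows of a unitary matrix are orthonormal.
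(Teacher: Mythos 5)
Your proof is correct. The paper does not prove this lemma at all---it is quoted verbatim with a citation to Hartwig and Spindelb\"{o}ck's original article---and your argument (take the SVD $A=W\,\mathrm{diag}(\Sigma,0)\,V^*$, set $U=W$, absorb the mismatch of the two unitary factors into the single unitary $Q^*=(U^*V)^*$, and read off $(K\ L)$ as its first block row, with $KK^*+LL^*=I_r$ coming from $Q^*Q=I_n$) is exactly the standard derivation underlying that cited result, with every step (the identity $V^*=Q^*U^*$, the block multiplication, and the orthonormality of the rows of a unitary matrix) checking out.
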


\begin{lemma}\cite[Theorem 1]{solution}\label{raneconle}
Let $A\in\mathbb{C}^{m\times n}$, $B\in\mathbb{C}^{m\times m}$ and $C\in\mathbb{C}^{n\times n}$. Then there exists a solution $X\in\mathbb{C}^{n\times m}$ to the rank equation \begin{equation}\label{rankeq}
{\rm rank}\left(
            \begin{array}{cc}
              A & B \\
              C & X \\
            \end{array}
          \right)={\rm rank}(A)
\end{equation}
 if and only if $\mathcal{R}(B)\subseteq\mathcal{R}(A)$
 and $\mathcal{R}(C^*)\subseteq\mathcal{R}(A^*)$, in which case,
 \begin{equation}\label{ranksolution}
   X=CA^{\dag}B.
 \end{equation}
\end{lemma}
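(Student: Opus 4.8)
The plan is to reduce the bordered matrix to block-diagonal form by invertible (hence rank-preserving) block row and column operations built from the Moore--Penrose inverse $A^{\dagger}$, and then read off both implications from the resulting rank identity; note that $A^{\dagger}$ always exists, so no auxiliary existence hypothesis is needed. The first step is to translate the two range hypotheses into projector equations. Since $AA^{\dagger}$ is the orthogonal projector onto $\mathcal{R}(A)$ and $A^{\dagger}A$ is the orthogonal projector onto $\mathcal{R}(A^*)$, the condition $\mathcal{R}(B)\subseteq\mathcal{R}(A)$ is equivalent to $AA^{\dagger}B=B$, i.e. $(I_m-AA^{\dagger})B=0$, and the condition $\mathcal{R}(C^*)\subseteq\mathcal{R}(A^*)$ is equivalent to $CA^{\dagger}A=C$, i.e. $C(I_n-A^{\dagger}A)=0$.

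The core computation is the following equivalence transformation, valid for \emph{every} $X\in\mathbb{C}^{n\times m}$:
\[
\begin{pmatrix} I_m & 0 \\ -CA^{\dagger} & I_n\end{pmatrix}\begin{pmatrix} A & B \\ C & X\end{pmatrix}\begin{pmatrix} I_n & -A^{\dagger}B \\ 0 & I_m\end{pmatrix}=\begin{pmatrix} A & (I_m-AA^{\dagger})B \\ C(I_n-A^{\dagger}A) & X-CA^{\dagger}B\end{pmatrix}.
\]
Both outer factors are unipotent block-triangular, hence invertible, so the two sides have equal rank; the only nontrivial simplification, which occurs in the $(2,2)$ block, uses $A^{\dagger}AA^{\dagger}=A^{\dagger}$. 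I would verify the four blocks by a direct multiplication, keeping careful track of the sizes ($A$ is $m\times n$, while $B$, $C$, $X$ have the matching square or rectangular shapes).

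From this identity both directions follow. For sufficiency, under the two projector equations the off-diagonal blocks on the right vanish, so the rank equals ${\rm rank}(A)+{\rm rank}(X-CA^{\dagger}B)$; this equals ${\rm rank}(A)$ precisely when $X-CA^{\dagger}B=0$, which at once establishes solvability, pins down the unique solution, and yields the formula $X=CA^{\dagger}B$. For necessity I would bypass the full Marsaglia--Styan rank expansion and argue with submatrices instead: $\begin{pmatrix} A & B\end{pmatrix}$ is a submatrix of the bordered matrix, so once the rank equation holds we get ${\rm rank}\begin{pmatrix} A & B\end{pmatrix}\le{\rm rank}(A)$, while trivially ${\rm rank}\begin{pmatrix} A & B\end{pmatrix}\ge{\rm rank}(A)$; equality forces $\mathcal{R}(B)\subseteq\mathcal{R}(A)$. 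Applying the same reasoning to the submatrix $\begin{pmatrix} A \\ C\end{pmatrix}$ (equivalently to $\begin{pmatrix} A^* & C^*\end{pmatrix}$) yields $\mathcal{R}(C^*)\subseteq\mathcal{R}(A^*)$.

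The argument is largely routine; the one step demanding care is the block elimination, where the cross terms of the $(2,2)$ block must be collapsed via $A^{\dagger}AA^{\dagger}=A^{\dagger}$ and the dimensions of the four products checked against the declared sizes of $A,B,C,X$. I do not anticipate any deeper obstacle.
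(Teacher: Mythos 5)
Your proof is correct, but there is nothing in the paper itself to compare it against: the paper states this result as a lemma with a citation to Gro{\ss}'s note \cite{solution} and supplies no proof of its own. Judged on its own merits, your argument is sound and self-contained. The block identity
\[
\begin{pmatrix} I_m & 0 \\ -CA^{\dagger} & I_n\end{pmatrix}
\begin{pmatrix} A & B \\ C & X\end{pmatrix}
\begin{pmatrix} I_n & -A^{\dagger}B \\ 0 & I_m\end{pmatrix}
=\begin{pmatrix} A & (I_m-AA^{\dagger})B \\ C(I_n-A^{\dagger}A) & X-CA^{\dagger}B\end{pmatrix}
\]
checks out: the $(2,2)$ block collapses via $A^{\dagger}AA^{\dagger}=A^{\dagger}$, all block sizes are consistent with $A\in\mathbb{C}^{m\times n}$, $B\in\mathbb{C}^{m\times m}$, $C\in\mathbb{C}^{n\times n}$, $X\in\mathbb{C}^{n\times m}$, and the outer factors are invertible, so rank is preserved for every $X$. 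The translations $\mathcal{R}(B)\subseteq\mathcal{R}(A)\Leftrightarrow (I_m-AA^{\dagger})B=0$ and $\mathcal{R}(C^*)\subseteq\mathcal{R}(A^*)\Leftrightarrow C(I_n-A^{\dagger}A)=0$ are standard, sufficiency then reads off as $\mathrm{rank}(A)+\mathrm{rank}(X-CA^{\dagger}B)=\mathrm{rank}(A)$, which also yields uniqueness and the formula $X=CA^{\dagger}B$ demanded by the ``in which case'' clause, and your necessity argument via the trivial bounds $\mathrm{rank}(A)\le\mathrm{rank}\begin{pmatrix}A & B\end{pmatrix}\le\mathrm{rank}\begin{pmatrix}A & B\\ C & X\end{pmatrix}$ is airtight. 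For comparison, Gro{\ss}'s original proof reaches the same conclusions through the Marsaglia--Styan rank formula for $2\times 2$ partitioned matrices; your unipotent block elimination is in effect a hands-on special case of that formula, with the advantage of being elementary and requiring no external rank identity as input.
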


\begin{lemma}\cite[Theorem 1]{Liao}\label{XAYBsolle}
Let $A\in\mathbb{C}^{m\times n}_r$  and $B\in\mathbb{C}^{l\times h}_{r_1}$. Assume
$A=P\left(
      \begin{array}{cc}
        I_r & 0 \\
       0 & 0 \\
      \end{array}
    \right)
Q$ and
$B=P_1\left(
      \begin{array}{cc}
        I_{r_1} & 0 \\
       0 & 0 \\
      \end{array}
    \right)
Q_1$,
where  $P\in\mathbb{C}^{m\times m}$, $Q\in\mathbb{C}^{n\times n}$, $P_1\in\mathbb{C}^{l\times l}$ and $Q_1\in\mathbb{C}^{h\times h}$ are nonsingular.
If $r=r_1$, then the general solution of  the matrix equation
$
XAY=B
$
is given by
\begin{equation*}
X=P_1\left(
      \begin{array}{cc}
        X_1 & X_2 \\
       0 & X_4 \\
      \end{array}
    \right)P^{-1},
Y=Q^{-1}\left(
      \begin{array}{cc}
            X_1^{-1} & 0 \\
      Y_3 & Y_4 \\
      \end{array}
    \right)Q_1,
\end{equation*}
where  $X_{1}\in\mathbb{C}^{r\times r}$ is an arbitrary nonsingular matrix, and
$X_{2}\in\mathbb{C}^{r\times (m-r)}$,  $X_{4}\in\mathbb{C}^{(l-r)\times (m-l)}$, $Y_{3}\in\mathbb{C}^{(n-r)\times r}$ and $Y_{4}\in\mathbb{C}^{(n-r)\times (n-r)}$ are arbitrary.
\end{lemma}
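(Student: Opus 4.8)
The plan is to reduce the equation $XAY=B$ to a canonical block form by an invertible change of variables, solve the resulting block equations directly, and then translate back. Because $A$ and $B$ are already presented as $A=PE_{m,n}Q$ and $B=P_1E_{l,h}Q_1$, where I abbreviate by $E_{s,t}$ the $s\times t$ matrix with an identity block $I_r$ in the top-left corner and zeros elsewhere, the natural substitution is $\widetilde{X}=P_1^{-1}XP$ and $\widetilde{Y}=QYQ_1^{-1}$. As $P,Q,P_1,Q_1$ are nonsingular, this is a bijection on the relevant matrix spaces, and cancelling the outer factors (using $r=r_1$, so that the identity block of $E_{l,h}$ also has size $r$) turns the equation into
\[
\widetilde{X}\,E_{m,n}\,\widetilde{Y}=E_{l,h}.
\]

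Next I would partition $\widetilde{X}$ and $\widetilde{Y}$ conformably with $E_{m,n}$, writing $\widetilde{X}=\left(\begin{smallmatrix}A_1&A_2\\A_3&A_4\end{smallmatrix}\right)$ and $\widetilde{Y}=\left(\begin{smallmatrix}B_1&B_2\\B_3&B_4\end{smallmatrix}\right)$ with $A_1,B_1\in\mathbb{C}^{r\times r}$. A direct block multiplication gives
\[
\widetilde{X}\,E_{m,n}\,\widetilde{Y}=\left(\begin{array}{cc}A_1B_1 & A_1B_2\\ A_3B_1 & A_3B_2\end{array}\right),
\]
so the matrix equation is equivalent to the four block equations $A_1B_1=I_r$, $A_1B_2=0$, $A_3B_1=0$, and $A_3B_2=0$.

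The decisive step is the first of these: since $A_1$ and $B_1$ are both $r\times r$ and their product is $I_r$, each is nonsingular and $B_1=A_1^{-1}$. Then $A_1B_2=0$ forces $B_2=0$, and $A_3B_1=0$ forces $A_3=0$ (after which $A_3B_2=0$ is automatic), while $A_2,A_4,B_3,B_4$ are entirely free. Relabelling $A_1,A_2,A_4,B_3,B_4$ as $X_1,X_2,X_4,Y_3,Y_4$ and inverting the substitution via $X=P_1\widetilde{X}P^{-1}$ and $Y=Q^{-1}\widetilde{Y}Q_1$ reproduces precisely the asserted form, the single nonsingular parameter $X_1$ being shared between $X$ (as its top-left block) and $Y$ (as $X_1^{-1}$). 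Running the block multiplication in reverse verifies that every such pair does solve $XAY=B$, so the two inclusions together give the general solution.

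I expect the genuine difficulty here to be organisational rather than conceptual: one must carry four independent outer dimensions $l,m,n,h$ and confirm that all block sizes $r,\,l-r,\,m-r,\,n-r,\,h-r$ are nonnegative and conformable throughout the multiplications --- which they are, since ${\rm rank}(A)=r$ gives $r\le\min(m,n)$ and ${\rm rank}(B)=r$ gives $r\le\min(l,h)$. The mathematical heart of the argument is the elementary observation that a product of two square matrices equal to the identity makes both factors invertible; once the problem is in canonical form, nothing deeper is needed.
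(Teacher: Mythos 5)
Your proof is correct. There is, however, nothing in the paper to compare it against: this statement is Lemma \ref{XAYBsolle}, which the authors import from Liao (1999) without proof, so your argument fills a gap rather than paralleling an existing one. The route you take --- absorbing $P,Q,P_1,Q_1$ by the substitution $\widetilde{X}=P_1^{-1}XP$, $\widetilde{Y}=QYQ_1^{-1}$, reducing to $\widetilde{X}E_{m,n}\widetilde{Y}=E_{l,h}$, and reading off the four block equations $A_1B_1=I_r$, $A_1B_2=0$, $A_3B_1=0$, $A_3B_2=0$ --- is the standard and essentially canonical proof, and it is presumably what the cited reference does (over a skew field, where the same argument works since $A_1B_1=I_r$ still forces both square factors to be invertible). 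One useful by-product of your conformability bookkeeping: it shows that the block dimensions as printed in the lemma contain typos, since $X_4$ must lie in $\mathbb{C}^{(l-r)\times(m-r)}$ (not $\mathbb{C}^{(l-r)\times(m-l)}$) and $Y_4$ in $\mathbb{C}^{(n-r)\times(h-r)}$ (not $\mathbb{C}^{(n-r)\times(n-r)}$), exactly as your proof produces them.
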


\section{Characterizations of  $\{1,3^{\mathfrak{m}}\}$,
$\{1,2,3^{\mathfrak{m}}\}$, $\{1,4^{\mathfrak{m}}\}$ and $\{1,2,4^{\mathfrak{m}}\}$-inverses} \label{character1234sec}
An interesting conclusion proved by Kamaraj and Sivakumar   in \cite[Theorem 4]{indef} is that if $A\in\mathbb{C}^{m\times n}$ is such that $A^{\mathfrak{m}}$ exists,  then
\begin{equation}\label{minMPA13mA14meq}
A^{\mathfrak{m}}=A^{(1,4^{\mathfrak{m}})}AA^{(1,3^{\mathfrak{m}})},
\end{equation}
where $A^{(1,3^{\mathfrak{m}})}\in A\{1,3^{\mathfrak{m}}\}$ and  $A^{(1,4^{\mathfrak{m}})}\in A\{1,4^{\mathfrak{m}}\}$. This result shows the importance of $A^{(1,3^{\mathfrak{m}})}$ and $A^{(1,4^{\mathfrak{m}})}$  to represent the Minkowski inverse $A^{\mathfrak{m}}$. Moreover, Petrovi\'{c} and Stanimirovi\'{c} \cite{indef2324re} have investigated the representations and computations of $\{2,3^{\sim}\}$ and $\{{2,4^{\sim}}\}$-inverses in an indefinite inner product space, which are generalizations of $\{2,3^{\mathfrak{m}}\}$ and $\{{2,4^\mathfrak{m}}\}$-inverses in Minkowski space. Motivated by the above work,  we consider the characterizations of  $\{1,3^{\mathfrak{m}}\}$,
$\{1,2,3^{\mathfrak{m}}\}$, $\{1,4^{\mathfrak{m}}\}$ and $\{1,2,4^{\mathfrak{m}}\}$-inverses in this section.
Before staring, an auxiliary lemma is given as follows.

\begin{lemma}\label{mprojectormgongeth}
Let $A\in\mathbb{C}^{n\times s}$ and $B\in\mathbb{C}^{t\times n}$.
Then
\begin{equation*}
\left(P_{\mathcal{R}({A}),\mathcal{N}(B)}\right)^{\sim}=P_{\mathcal{R}(B^{\sim}),\mathcal{N}(A^{\sim})}.
\end{equation*}
\end{lemma}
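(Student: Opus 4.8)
The plan is to prove the identity by characterizing the right-hand side as an idempotent matrix with a prescribed range and null space. Recall that an idempotent matrix is precisely the projector onto its range along its null space; hence, writing $P:=P_{\mathcal{R}(A),\mathcal{N}(B)}\in\mathbb{C}^{n\times n}$ and $Q:=P^{\sim}$, it suffices to establish that $\textrm{(i)}$ $Q^2=Q$, $\textrm{(ii)}$ $\mathcal{R}(Q)=\mathcal{R}(B^{\sim})$, and $\textrm{(iii)}$ $\mathcal{N}(Q)=\mathcal{N}(A^{\sim})$. By definition $P^2=P$, $\mathcal{R}(P)=\mathcal{R}(A)$ and $\mathcal{N}(P)=\mathcal{N}(B)$, and since $P$ is square of order $n$ we have $Q=GP^*G$, where $G$ denotes the Minkowski metric matrix of order $n$ (while $G_s$ and $G_t$ will denote those of orders $s$ and $t$).

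First I would record the elementary properties of the Minkowski adjoint that drive the argument: $G^*=G$ and $G^2=I_n$, so $G$ is invertible with $G^{-1}=G$; and the anti-multiplicativity $(XY)^{\sim}=Y^{\sim}X^{\sim}$, which follows from $(XY)^*=Y^*X^*$ after the intermediate metric matrices are cancelled via $G^2=I$. Idempotency of $Q$ is then immediate, since $Q^2=(P^{\sim})^2=(P^2)^{\sim}=P^{\sim}=Q$; in particular $\mathbb{C}^n=\mathcal{R}(Q)\oplus\mathcal{N}(Q)$, so the projector on the right-hand side is automatically well defined once $\textrm{(ii)}$ and $\textrm{(iii)}$ are verified.

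The heart of the argument is the computation of $\mathcal{R}(Q)$ and $\mathcal{N}(Q)$. Using the standard identities $\mathcal{R}(X^*)=\mathcal{N}(X)^{\perp}$ and $\mathcal{N}(X^*)=\mathcal{R}(X)^{\perp}$, together with the observation that right-multiplication by the invertible $G$ leaves a column space unchanged while left-multiplication by $G$ carries it to its image, I obtain
\[
\mathcal{R}(Q)=\mathcal{R}(GP^*G)=G\,\mathcal{R}(P^*)=G\,\mathcal{N}(P)^{\perp}=G\,\mathcal{N}(B)^{\perp}.
\]
A parallel computation for the null space (writing $GP^*Gx=0$, discarding the left factor $G$, and transporting $\mathcal{N}(P^*)=\mathcal{R}(P)^{\perp}$ by $G^{-1}=G$) gives $\mathcal{N}(Q)=G\,\mathcal{R}(A)^{\perp}$. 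On the other hand, expanding the defining formulas $B^{\sim}=GB^*G_t$ and $A^{\sim}=G_sA^*G$ and discarding the inert invertible metric factor in each case (the order-$t$ factor multiplying $B^*$ on the right, the order-$s$ factor multiplying $A^*$ on the left) yields $\mathcal{R}(B^{\sim})=G\,\mathcal{R}(B^*)=G\,\mathcal{N}(B)^{\perp}$ and $\mathcal{N}(A^{\sim})=G\,\mathcal{N}(A^*)=G\,\mathcal{R}(A)^{\perp}$, which match exactly the expressions found for $\mathcal{R}(Q)$ and $\mathcal{N}(Q)$.

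The only point demanding care, and the step I expect to be the main obstacle, is the bookkeeping of the three metric matrices of orders $n$, $t$, and $s$ that appear: one must check that the order-$t$ factor in $B^{\sim}$ and the order-$s$ factor in $A^{\sim}$ are genuinely inert for the range and null-space identities (they enter by right-multiplication and left-multiplication, respectively, by invertible matrices), so that only the order-$n$ metric $G$ survives in every subspace equality. Once this is tracked carefully, $\textrm{(i)}$--$\textrm{(iii)}$ hold, and therefore $Q=P_{\mathcal{R}(B^{\sim}),\mathcal{N}(A^{\sim})}$, completing the proof.
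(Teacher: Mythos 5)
Your proof is correct and follows essentially the same route as the paper's: both arguments set $Q=\left(P_{\mathcal{R}(A),\mathcal{N}(B)}\right)^{\sim}=GP^*G$, verify $Q^2=Q$, and identify $\mathcal{R}(Q)=G\left(\mathcal{N}(B)\right)^{\perp}=\mathcal{R}(B^{\sim})$ and $\mathcal{N}(Q)=G\left(\mathcal{R}(A)\right)^{\perp}=\mathcal{N}(A^{\sim})$, so that $Q$ is the stated projector. The only cosmetic difference is that the paper reaches $\mathcal{N}(Q)$ through its orthogonal complement $\mathcal{R}(GA)=\mathcal{R}((A^{\sim})^*)$, whereas you compute it directly; your careful tracking of the order-$s$ and order-$t$ metric factors is exactly the bookkeeping the paper's proof implicitly performs.
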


\begin{proof}
Write $Q=(P_{\mathcal{R}({A}),\mathcal{N}(B)})^{\sim}=GP_{\left(\mathcal{N}(B)\right)^{\perp},\left(\mathcal{R}({A})\right)^{\perp}}G$. Then $Q^2=Q$,
\begin{align*}
  &\mathcal{R}(Q)= G\left(\mathcal{N}(B)\right)^{\perp}=\mathcal{R}(GB^*)=\mathcal{R}(B^{\sim}),\\
  &\left(\mathcal{N}(Q)\right)^{\perp}=\left(\mathcal{N}(P_{\left(\mathcal{N}(B)\right)^{\perp},\left(\mathcal{R}({A})\right)^{\perp}}G)\right)^{\perp} =\mathcal{R}(GP_{\mathcal{R}({A}),\mathcal{N}(B)})=\mathcal{R}(GA)=\mathcal{R}((A^{\sim})^{*}),
\end{align*}
which implies $\mathcal{N}(Q)=\left(\mathcal{R}((A^{\sim})^{*})\right)^{\perp}=\mathcal{N}(A^{\sim})$. Thus, $Q=P_{\mathcal{R}(B^{\sim}),\mathcal{N}(A^{\sim})}$.
\end{proof}

In the following theorems, we prove the equivalence of the existence of $\{1,3^{\mathfrak{m}}\}$ and $\{1,2,3^{\mathfrak{m}}\}$-inverses, and show some of their  characterizations.

\begin{theorem}\label{m13m123eqth}
Let $A\in\mathbb{C}^{m\times n}$. Then
 there exists  $X\in A\{1,3^{\mathfrak{m}}\}$ if and only if there exists $Y\in A\{1,2,3^{\mathfrak{m}}\}$.
\end{theorem}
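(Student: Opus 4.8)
The plan is to prove the two implications separately, with the reverse direction being essentially definitional and the forward direction carried out by an explicit construction.

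For the ``if'' direction, suppose $Y\in A\{1,2,3^{\mathfrak{m}}\}$. Since the defining equations of a $\{1,2,3^{\mathfrak{m}}\}$-inverse include equations $(1)$ and $(3^{\mathfrak{m}})$, the matrix $Y$ automatically satisfies the requirements to lie in $A\{1,3^{\mathfrak{m}}\}$. Hence the existence of a $\{1,2,3^{\mathfrak{m}}\}$-inverse trivially yields the existence of a $\{1,3^{\mathfrak{m}}\}$-inverse, and this direction needs no computation beyond the inclusion $A\{1,2,3^{\mathfrak{m}}\}\subseteq A\{1,3^{\mathfrak{m}}\}$.

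For the ``only if'' direction, suppose $X\in A\{1,3^{\mathfrak{m}}\}$, so that $AXA=A$ and $(AX)^{\sim}=AX$. Following the classical idea used to upgrade a $\{1,3\}$-inverse to a $\{1,2,3\}$-inverse in the Moore--Penrose setting, I would set $Y=XAX$ and verify that $Y\in A\{1,2,3^{\mathfrak{m}}\}$. The crucial preliminary observation is that $AY=AXAX=(AXA)X=AX$, so that left multiplication of $A$ by $Y$ reproduces $AX$ exactly. From this, equation $(3^{\mathfrak{m}})$ is inherited immediately, since $(AY)^{\sim}=(AX)^{\sim}=AX=AY$, and equation $(1)$ follows from $AYA=(AX)A=AXA=A$. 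Equation $(2)$, namely $YAY=Y$, is then confirmed by repeatedly collapsing the factor $AXA$ to $A$ inside the product $XAXAXAX$, which reduces it to $XAX=Y$.

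I do not expect any genuine obstacle here: the statement is a structural fact whose entire content lies in the forward direction, and there the only mild care needed is the bookkeeping in verifying the idempotence-type equation $(2)$. The key conceptual point, worth stating explicitly, is that the Minkowski-adjoint condition $(3^{\mathfrak{m}})$ constrains only the product $AX$, which is left invariant under the replacement $X\mapsto XAX$; this is precisely why the symmetry condition transfers for free, and why no new hypothesis on $A$ (such as the rank condition of Lemma~\ref{minMPexistconlem}) is required for the equivalence.
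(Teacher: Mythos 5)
Your proof is correct, but it takes a genuinely different route from the paper's. The paper handles the forward direction indirectly: from $X\in A\{1,3^{\mathfrak{m}}\}$ it derives $A=AXA=(AX)^{\sim}A=X^{\sim}A^{\sim}A$, concludes ${\rm rank}(A)={\rm rank}(A^{\sim}A)$, and then invokes an external result (Theorem 2 of Meenakshi's paper, cited as \cite{minMPre}) which guarantees that this rank condition implies the existence of a $\{1,2,3^{\mathfrak{m}}\}$-inverse. You instead construct the $\{1,2,3^{\mathfrak{m}}\}$-inverse explicitly as $Y=XAX$, and your verification is sound: $AY=AXAX=AX$ gives $(AY)^{\sim}=AY$ and $AYA=A$ at once, and the reduction $YAY=XAXAXAX=XAXAX=XAX=Y$ by repeated collapsing of $AXA$ to $A$ is exactly right. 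Your argument buys self-containedness and generality: it is purely equational, so it needs no rank theory, no appeal to the literature, and in fact would remain valid verbatim in any ring with involution; it also produces a concrete formula for $Y$ rather than a bare existence claim. What the paper's route buys is a connection to the rank characterization ${\rm rank}(A)={\rm rank}(A^{\sim}A)$, which is the currency in which most of that paper's existence results (e.g.\ Lemma \ref{minMPexistconlem} and Theorem \ref{mMPexitscondition}) are expressed, so their proof slots the theorem into that framework even though it is logically heavier than necessary.
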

\begin{proof}
``$\Leftarrow$". It is obvious. ``$\Rightarrow$". If there exists $X\in A\{1,3^{\mathfrak{m}}\}$, then
\begin{equation*}
  A=AXA=(AX)^{\sim}A=X^{\sim}A^{\sim}A,
\end{equation*}
which implies ${\rm rank}(A)={\rm rank}(A^{\sim}A)$. Using \cite[Theorem 2]{minMPre}, we have that there exists  $Y\in A\{1,2,3^{\mathfrak{m}}\}$.
\end{proof}

\begin{theorem}\label{m13ineqconditionth}
Let $A\in\mathbb{C}^{m\times n}$ and $X\in\mathbb{C}^{n\times m}$. Then the following statements are equivalent:
\begin{enumerate}[$(1)$]
  \item\label{m13ineqconditionit1} $X\in A\{1,3^{\mathfrak{m}}\}$;
  \item\label{m13ineqconditionit2} $A^{\sim}AX=A^{\sim}$;
  \item\label{m13ineqconditionit3} $AX=P_{\mathcal{R}(A),\mathcal{N}(A^{\sim})}$.
\end{enumerate}
In this case,
\begin{equation}\label{m13alleq}
  A\{1,3^{\mathfrak{m}}\}=\left\{
  \begin{array}{c|c}
  A^{(1,3^{\mathfrak{m}})}+(I_n-A^{(1,3^{\mathfrak{m}})}A)Y & Y\in\mathbb{C}^{n\times m} \\
  \end{array}
\right\},
\end{equation}
 where $A^{(1,3^{\mathfrak{m}})}\in A\{1,3^{\mathfrak{m}}\}$  is  fixed  but arbitrary.
\end{theorem}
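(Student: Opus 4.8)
The plan is to prove the equivalences in the cyclic order $(\ref{m13ineqconditionit1}) \Rightarrow (\ref{m13ineqconditionit2}) \Rightarrow (\ref{m13ineqconditionit3}) \Rightarrow (\ref{m13ineqconditionit1})$, relying throughout on the elementary properties of the Minkowski adjoint, namely $(BC)^{\sim}=C^{\sim}B^{\sim}$, $(B^{\sim})^{\sim}=B$, and ${\rm rank}(B^{\sim})={\rm rank}(B)$ (the last because $G$ and $F$ are invertible). For $(\ref{m13ineqconditionit1})\Rightarrow(\ref{m13ineqconditionit2})$ I would simply compute, using $(3^{\mathfrak{m}})$ and then $(1)$,
\begin{equation*}
A^{\sim}AX=A^{\sim}(AX)^{\sim}=A^{\sim}X^{\sim}A^{\sim}=(AXA)^{\sim}=A^{\sim}.
\end{equation*}

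For the step $(\ref{m13ineqconditionit2})\Rightarrow(\ref{m13ineqconditionit3})$, which I expect to be the technical heart, set $P=AX$ and read $(\ref{m13ineqconditionit2})$ as $A^{\sim}P=A^{\sim}$. Left-multiplying by $X^{\sim}$ gives $P^{\sim}P=P^{\sim}$; taking the Minkowski adjoint of this identity and using $(B^{\sim})^{\sim}=B$ yields $P^{\sim}P=P$, so that $P=P^{\sim}$ and hence $P^2=P$. Thus $P$ is a Minkowski-Hermitian idempotent. Taking the adjoint of $A^{\sim}P=A^{\sim}$ then gives $PA=A$, whence $\mathcal{R}(A)=\mathcal{R}(PA)\subseteq\mathcal{R}(P)\subseteq\mathcal{R}(A)$, i.e. $\mathcal{R}(P)=\mathcal{R}(A)$. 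From $A^{\sim}(I_m-P)=0$ I obtain $\mathcal{N}(P)=\mathcal{R}(I_m-P)\subseteq\mathcal{N}(A^{\sim})$, and since ${\rm rank}(P)={\rm rank}(A)={\rm rank}(A^{\sim})$ forces ${\rm dim}\,\mathcal{N}(P)={\rm dim}\,\mathcal{N}(A^{\sim})$, the inclusion is an equality. Therefore $AX=P_{\mathcal{R}(A),\mathcal{N}(A^{\sim})}$, which also certifies that the projector appearing in $(\ref{m13ineqconditionit3})$ is well defined. The remaining implication $(\ref{m13ineqconditionit3})\Rightarrow(\ref{m13ineqconditionit1})$ is short: $AXA=P_{\mathcal{R}(A),\mathcal{N}(A^{\sim})}A=A$ because the projector fixes $\mathcal{R}(A)$, and Lemma~\ref{mprojectormgongeth}, applied with its two matrices chosen so that $\mathcal{R}(A')=\mathcal{R}(A)$ and $\mathcal{N}(B')=\mathcal{N}(A^{\sim})$, gives $(AX)^{\sim}=P_{\mathcal{R}(A),\mathcal{N}(A^{\sim})}=AX$, i.e. $(3^{\mathfrak{m}})$.

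Finally, to obtain the representation (\ref{m13alleq}) I would exploit the equivalence $(\ref{m13ineqconditionit1})\Leftrightarrow(\ref{m13ineqconditionit2})$: membership in $A\{1,3^{\mathfrak{m}}\}$ is equivalent to solving the consistent linear equation $A^{\sim}AX=A^{\sim}$, whose solution set is the particular solution $A^{(1,3^{\mathfrak{m}})}$ plus the homogeneous part $\{Z:A^{\sim}AZ=0\}$. Because the existence of a $\{1,3^{\mathfrak{m}}\}$-inverse forces ${\rm rank}(A^{\sim}A)={\rm rank}(A)$ (as established in the proof of Theorem~\ref{m13m123eqth}, or via Lemma~\ref{minMPexistconlem}), one has $\mathcal{N}(A^{\sim}A)=\mathcal{N}(A)$, so the homogeneous part is exactly $\{Z:AZ=0\}$. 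Since $A(I_n-A^{(1,3^{\mathfrak{m}})}A)=0$ while $I_n-A^{(1,3^{\mathfrak{m}})}A$ fixes every vector of $\mathcal{N}(A)$, the range of $I_n-A^{(1,3^{\mathfrak{m}})}A$ equals $\mathcal{N}(A)$, and hence $\{(I_n-A^{(1,3^{\mathfrak{m}})}A)Y:Y\in\mathbb{C}^{n\times m}\}=\{Z:AZ=0\}$; combining with the particular solution yields (\ref{m13alleq}). The main obstacle is the middle implication $(\ref{m13ineqconditionit2})\Rightarrow(\ref{m13ineqconditionit3})$: one must carefully track the product-reversal and involutivity of the Minkowski adjoint to show that $P=AX$ is a Hermitian idempotent, and then pin down $\mathcal{N}(P)$ by a rank/dimension count rather than by proving both inclusions directly.
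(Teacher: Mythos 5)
Your proof is correct, and the three-way equivalence follows the paper's own argument almost step for step: the same computation $A^{\sim}AX=A^{\sim}(AX)^{\sim}=(AXA)^{\sim}=A^{\sim}$ for (1)$\Rightarrow$(2), the same extraction of a Minkowski-Hermitian idempotent $P=AX$ from $A^{\sim}P=A^{\sim}$ for (2)$\Rightarrow$(3), and the same appeal to Lemma~\ref{mprojectormgongeth} for (3)$\Rightarrow$(1). The differences are localized. In (2)$\Rightarrow$(3) you pin down $\mathcal{N}(AX)$ by one inclusion plus a dimension count (${\rm rank}(AX)={\rm rank}(A)={\rm rank}(A^{\sim})$), whereas the paper obtains both inclusions directly from $A^{\sim}AX=A^{\sim}$ and $AX=(AX)^{\sim}=X^{\sim}A^{\sim}$; both are sound. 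Where you genuinely diverge is the solution-set formula \eqref{m13alleq}: the paper rewrites membership via item (3) as the equation $AZ=AA^{(1,3^{\mathfrak{m}})}$ and quotes Lemma~\ref{AXDsolle}, while you use item (2), i.e.\ the equation $A^{\sim}AZ=A^{\sim}$, and solve it by hand as a particular solution plus the homogeneous part, using ${\rm rank}(A^{\sim}A)={\rm rank}(A)$ (hence $\mathcal{N}(A^{\sim}A)=\mathcal{N}(A)$) to identify that part with $\{Z: AZ=0\}=\{(I_n-A^{(1,3^{\mathfrak{m}})}A)Y : Y\in\mathbb{C}^{n\times m}\}$. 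Your route is more self-contained and lands directly on the particular solution $A^{(1,3^{\mathfrak{m}})}$, whereas the paper's application of Lemma~\ref{AXDsolle} (with $A^{(1)}=A^{(1,3^{\mathfrak{m}})}$) produces the particular solution $A^{(1,3^{\mathfrak{m}})}AA^{(1,3^{\mathfrak{m}})}$ and leaves implicit that this differs from $A^{(1,3^{\mathfrak{m}})}$ only by a homogeneous term; the price you pay is the extra rank argument. One small caveat: your parenthetical alternative citation of Lemma~\ref{minMPexistconlem} is misplaced, since that lemma characterizes existence of the full Minkowski inverse (both rank equalities), which is not implied by the existence of a $\{1,3^{\mathfrak{m}}\}$-inverse alone; your primary justification, ${\rm rank}(A^{\sim}A)={\rm rank}(A)$ via $A=X^{\sim}A^{\sim}A$ as in the proof of Theorem~\ref{m13m123eqth}, is the right one.
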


\begin{proof}
\eqref{m13ineqconditionit1} $\Rightarrow$ \eqref{m13ineqconditionit2}. Since $X\in A\{1,3^{\mathfrak{m}}\}$,  it follows that $A^{\sim}A X=A^{\sim}(A X)^{\sim}=(AXA)^{\sim}=A^{\sim}$. \par
\eqref{m13ineqconditionit2} $\Rightarrow$ \eqref{m13ineqconditionit3}. Since $(AX)^{\sim}A=A$ from $A^{\sim}A X=A^{\sim}$, we have $AX=(AX)^{\sim}AX$, implying $(AX)^{\sim}=AX$. Thus,  $AX=(AX)^{\sim}AX=(AX)^2$, that is, $AX$ is a projector. Again by $(AX)^{\sim}=AX$, we have
 $AXA=A$, which, together  with $A^{\sim}A X=A^{\sim}$, shows that $\mathcal{R}(AX)=\mathcal{R}(A)$ and $\mathcal{N}(AX)=\mathcal{N}(A^{\sim})$. Hence, $AX=P_{\mathcal{R}(A),\mathcal{N}(A^{\sim})}$.\par
\eqref{m13ineqconditionit3} $\Rightarrow$ \eqref{m13ineqconditionit1}. Clearly, $AXA=P_{\mathcal{R}(A),\mathcal{N}(A^{\sim})}A=A$.
 Applying Lemma \ref{mprojectormgongeth} to $AX=P_{\mathcal{R}(A),\mathcal{N}(A^{\sim})}$, we see
 $(AX)^{\sim}=P_{\mathcal{R}(A),\mathcal{N}(A^{\sim})}=AX$.\par
In this case,  we have
$A\{1,3^{\mathfrak{m}}\}=\left\{
   \begin{array}{c|c}
   Z\in\mathbb{C}^{n\times m} & AZ=AA^{(1,3^{\mathfrak{m}})} \\
   \end{array}
 \right\}
$,
where $A^{(1,3^{\mathfrak{m}})}$ is a fixed  but arbitrary $\{1,3^{\mathfrak{m}}\}$-inverse of $A$. Thus, applying Lemma \ref{AXDsolle} to $AZ=AA^{(1,3^{\mathfrak{m}})}$, we have  \eqref{m13alleq} directly.
\end{proof}
\par
\begin{theorem}
Let $A\in\mathbb{C}^{m\times n}_r$ with ${\rm rank}(A^{\sim}A)={\rm rank}(A)>0$, and let   a full rank factorization of $A$ be $A =BC$, where $B\in\mathbb{C}^{m\times r}_r$ and $C\in\mathbb{C}^{r\times n}_r$. Then
\begin{equation*}
A\{1,2,3^{\mathfrak{m}}\}=\left\{
   \begin{array}{c|c}
     C_R^{-1}(B^{\sim}B)^{-1}B^{\sim} & C_R^{-1}\in\mathbb{C}^{n\times r}_{r} \\
   \end{array}
 \right\}.
\end{equation*}
\end{theorem}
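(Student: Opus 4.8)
The plan is to prove the set equality by two inclusions, after first securing the structural fact that makes the displayed formula meaningful, namely that $B^{\sim}B$ is invertible. I would establish this as follows. Writing $A=BC$ gives $A^{\sim}A=C^{\sim}B^{\sim}BC$. Since $B$ has full column rank and $C$ has full row rank, the Minkowski adjoint $C^{\sim}$ (a product of $C^{*}$ with two nonsingular metric matrices) has full column rank $r$ and is therefore left-cancellable, while $C$ is right-cancellable; hence ${\rm rank}(A^{\sim}A)={\rm rank}(C^{\sim}(B^{\sim}B)C)={\rm rank}(B^{\sim}B)$. The hypothesis ${\rm rank}(A^{\sim}A)={\rm rank}(A)=r$ then forces the $r\times r$ matrix $B^{\sim}B$ to be nonsingular, so that $M:=(B^{\sim}B)^{-1}B^{\sim}$ is well defined and satisfies $MB=I_r$. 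I would also record the adjoint identities $(PQ)^{\sim}=Q^{\sim}P^{\sim}$ and $(P^{\sim})^{\sim}=P$, together with their consequence $(B^{\sim}B)^{\sim}=B^{\sim}(B^{\sim})^{\sim}=B^{\sim}B$, since this self-adjointness is exactly what makes condition $(3^{\mathfrak{m}})$ work out.

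For the inclusion $\supseteq$, I would take an arbitrary right inverse $C_R^{-1}$ of $C$ (so $CC_R^{-1}=I_r$) and set $X=C_R^{-1}M$. Using the two cancellation relations $CC_R^{-1}=I_r$ and $MB=I_r$, the verifications of $(1)$ and $(2)$ are immediate: $AXA=B(CC_R^{-1})(MB)C=BC=A$ and $XAX=C_R^{-1}(MB)(CC_R^{-1})M=C_R^{-1}M=X$. For $(3^{\mathfrak{m}})$ I would first simplify $AX=BC\,C_R^{-1}M=B(B^{\sim}B)^{-1}B^{\sim}$ and then compute $(AX)^{\sim}=B\big((B^{\sim}B)^{\sim}\big)^{-1}B^{\sim}=B(B^{\sim}B)^{-1}B^{\sim}=AX$, where the middle equality uses $(B^{\sim}B)^{\sim}=B^{\sim}B$. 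This shows every matrix of the displayed form lies in $A\{1,2,3^{\mathfrak{m}}\}$.

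For the reverse inclusion $\subseteq$, I would start from an arbitrary $X\in A\{1,2,3^{\mathfrak{m}}\}$ and recover its shape. By Theorem \ref{m13ineqconditionth}, membership in $A\{1,3^{\mathfrak{m}}\}$ is equivalent to $A^{\sim}AX=A^{\sim}$; substituting $A=BC$ and $A^{\sim}=C^{\sim}B^{\sim}$, cancelling the left-injective factor $C^{\sim}$, and inverting $B^{\sim}B$ yields $CX=(B^{\sim}B)^{-1}B^{\sim}=M$. Next I would invoke equation $(2)$: since $AX=BCX=BM$, we get $X=XAX=X(BM)=(XB)M$. Finally, the matrix $C_R^{-1}:=XB$ is a genuine right inverse of $C$, because $C(XB)=(CX)B=MB=I_r$; and $C(XB)=I_r$ forces $XB\in\mathbb{C}^{n\times r}_r$. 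Hence $X=C_R^{-1}M$ with $C_R^{-1}$ a right inverse of $C$, which completes this inclusion and the proof.

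The only genuinely delicate point, and the one I would flag as the main obstacle, is the invertibility of $B^{\sim}B$: unlike the Euclidean Gram matrix $B^{*}B$, the Minkowski ``Gram'' matrix $B^{\sim}B$ need not be invertible merely because $B$ has full column rank, and it is precisely the hypothesis ${\rm rank}(A^{\sim}A)={\rm rank}(A)$ (rather than the full-rank factorization alone) that guarantees it, via the rank identity ${\rm rank}(A^{\sim}A)={\rm rank}(B^{\sim}B)$. Once this is in place, everything reduces to the two cancellation relations $CC_R^{-1}=I_r$ and $MB=I_r$ and to the self-adjointness $(B^{\sim}B)^{\sim}=B^{\sim}B$.
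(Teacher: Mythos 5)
Your proof is correct, and it diverges from the paper's own argument in the harder (reverse) inclusion. The paper obtains $A\{1,2,3^{\mathfrak{m}}\}\subseteq\left\{C_R^{-1}(B^{\sim}B)^{-1}B^{\sim}\right\}$ by invoking the classical parametrization $A\{1,2\}=\left\{C_R^{-1}B_L^{-1}\right\}$ attached to a full rank factorization, and then showing that the condition $(BB_L^{-1})^{\sim}=BB_L^{-1}$ forces $B_L^{-1}=(B^{\sim}B)^{-1}B^{\sim}$; the forward inclusion is dismissed as an easy verification. You bypass that parametrization entirely: from Theorem \ref{m13ineqconditionth} you get $A^{\sim}AX=A^{\sim}$, cancel the full-column-rank factor $C^{\sim}$ and invert $B^{\sim}B$ to obtain $CX=(B^{\sim}B)^{-1}B^{\sim}$, and then use the $\{2\}$-condition to exhibit $X=(XB)(B^{\sim}B)^{-1}B^{\sim}$ with $XB$ a genuine right inverse of $C$. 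Both arguments are sound; yours is more self-contained (it needs only results already proved in the paper, not the external $\{1,2\}$-parametrization), and it has the additional virtue of explicitly proving that $B^{\sim}B$ is nonsingular via ${\rm rank}(A^{\sim}A)={\rm rank}(B^{\sim}B)$ --- a fact the paper's proof uses silently every time it writes $(B^{\sim}B)^{-1}$, and which, as you rightly stress, is genuinely at issue in Minkowski space: full column rank of $B$ alone does not suffice, e.g.\ $B=(1,1)^{T}$ gives $B^{\sim}B=0$. What the paper's route buys is brevity: once the $\{1,2\}$-parametrization is accepted, the whole reverse inclusion reduces to identifying the left-inverse factor of $B$.
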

\par
\begin{proof}
We can easily verify that $[C_R^{-1}(B^{\sim}B)^{-1}B^{\sim}]\in A\{1,2,3^{\mathfrak{m}}\}$. Conversely,
let $H\in A\{1,2,3^{\mathfrak{m}}\}$. Using the fact that
\begin{equation*}
  A\{1,2\}=\left\{
   \begin{array}{c|c}
     C_R^{-1}B^{-1}_{L}&  B^{-1}_{L}\in\mathbb{C}^{r\times m}_{r},C_R^{-1}\in\mathbb{C}^{n\times r}_{r}  \\
   \end{array}
 \right\},
\end{equation*}
we have $H=C^{-1}_{R}B^{-1}_{L}$ for some $B^{-1}_{L}\in\mathbb{C}^{r\times m}_{r}$ and $C_R^{-1} \in\mathbb{C}^{n\times r}_{r}$. Moreover, it follows from $H\in A\{3^{\mathfrak{m}}\}$ that
\begin{align*}
  (AH)^{\sim}=AH & \Leftrightarrow (BB^{-1}_{L})^{\sim}=BB^{-1}_{L} \\
   & \Leftrightarrow B^{-1}_{L}=(B^{\sim}B)^{-1}B^{\sim}.
\end{align*}
Hence every $H\in A\{1,2,3^{\mathfrak{m}}\}$ must be of the form $C_R^{-1}(B^{\sim}B)^{-1}B^{\sim}$. This completes the proof.
\end{proof}
\par
Using  an obvious fact that $X^{\sim}\in A^{\sim}\{1,3^{\mathfrak{m}}\} $ if and only if $ X\in A\{1,4^{\mathfrak{m}}\}$, where $A\in\mathbb{C}^{m\times n}$, we have  the following  results  which  show that  $\{1,4^{\mathfrak{m}}\}$ and $\{1,2,4^{\mathfrak{m}}\}$-inverses have  properties similar to that of
$\{1,3^{\mathfrak{m}}\}$ and $\{1,2,3^{\mathfrak{m}}\}$-inverses.
\par
\begin{theorem}
Let $A\in\mathbb{C}^{m\times n}$. Then
 there exists  $X\in A\{1,4^{\mathfrak{m}}\}$ if and only if there exists $Y\in A\{1,2,4^{\mathfrak{m}}\}$.
\end{theorem}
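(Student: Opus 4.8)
The plan is to exploit the Minkowski-adjoint duality already signalled just before the statement, namely that passing to the Minkowski adjoint interchanges the roles of the $3^{\mathfrak{m}}$- and $4^{\mathfrak{m}}$-conditions, and thereby to reduce the claim to Theorem \ref{m13m123eqth} applied to $A^{\sim}$ in place of $A$. In this way nothing substantially new has to be done: the whole statement becomes a formal consequence of the $\{1,3^{\mathfrak{m}}\}$ case already proved.

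First I would record the two duality facts on which everything rests, using the antihomomorphism property $(PQ)^{\sim}=Q^{\sim}P^{\sim}$ and the involution property $(P^{\sim})^{\sim}=P$ of the Minkowski adjoint:
\begin{align*}
X\in A\{1,4^{\mathfrak{m}}\}&\iff X^{\sim}\in A^{\sim}\{1,3^{\mathfrak{m}}\},\\
X\in A\{1,2,4^{\mathfrak{m}}\}&\iff X^{\sim}\in A^{\sim}\{1,2,3^{\mathfrak{m}}\}.
\end{align*}
Taking the Minkowski adjoint of $AXA=A$ gives $A^{\sim}X^{\sim}A^{\sim}=A^{\sim}$, and of $XAX=X$ gives $X^{\sim}A^{\sim}X^{\sim}=X^{\sim}$, so the $(1)$- and $(2)$-conditions are preserved under $X\mapsto X^{\sim}$, $A\mapsto A^{\sim}$. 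For the remaining condition, since $(XA)^{\sim}=A^{\sim}X^{\sim}$ and $(A^{\sim}X^{\sim})^{\sim}=XA$, the equation $(XA)^{\sim}=XA$ is equivalent to $(A^{\sim}X^{\sim})^{\sim}=A^{\sim}X^{\sim}$, which is precisely the $3^{\mathfrak{m}}$-condition for $X^{\sim}$ as an inverse of $A^{\sim}$. This yields both displayed equivalences.

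With these in hand the proof is a short chain. If $X\in A\{1,4^{\mathfrak{m}}\}$, then $X^{\sim}\in A^{\sim}\{1,3^{\mathfrak{m}}\}$; applying Theorem \ref{m13m123eqth} to the matrix $A^{\sim}$ produces some $Y'\in A^{\sim}\{1,2,3^{\mathfrak{m}}\}$; setting $Y=(Y')^{\sim}$ and invoking the second equivalence (together with $(Y')^{\sim\sim}=Y'$) gives $Y\in A\{1,2,4^{\mathfrak{m}}\}$. The converse implication is immediate, since every $\{1,2,4^{\mathfrak{m}}\}$-inverse is in particular a $\{1,4^{\mathfrak{m}}\}$-inverse.

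I do not anticipate a genuine obstacle. The only point requiring care is the bookkeeping in verifying the two displayed equivalences, i.e. confirming that the Minkowski adjoint is an anti-involution, $(PQ)^{\sim}=Q^{\sim}P^{\sim}$ and $(P^{\sim})^{\sim}=P$; this follows from the fact that the metric matrices satisfy $G^{*}=G$, $F^{*}=F$ and $G^{2}=I$, $F^{2}=I$, so that the inner metric factors cancel when two adjoints are composed. Once this is settled, the statement follows formally from the already-established $\{1,3^{\mathfrak{m}}\}$/$\{1,2,3^{\mathfrak{m}}\}$ equivalence.
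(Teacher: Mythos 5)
Your proposal is correct and is essentially the paper's own argument: the paper states this theorem without a separate proof, precisely because it follows from the observation (stated just before the theorem) that $X\in A\{1,4^{\mathfrak{m}}\}$ if and only if $X^{\sim}\in A^{\sim}\{1,3^{\mathfrak{m}}\}$, combined with Theorem \ref{m13m123eqth} applied to $A^{\sim}$. Your write-up merely makes explicit the anti-involution bookkeeping that the paper leaves implicit, so there is nothing to correct.
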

\par
\begin{theorem}\label{m14ineqconditionth}
Let $A\in\mathbb{C}^{m\times n}$ and $X\in\mathbb{C}^{n\times m}$. Then the following statements are equivalent:
\begin{enumerate}[$(1)$]
  \item $X\in A\{1,4^{\mathfrak{m}}\}$;
  \item $XAA^{\sim}=A^{\sim}$;
  \item $XA=P_{\mathcal{R}(A^{\sim}),\mathcal{N}(A)}$.
\end{enumerate}
In this case,
\begin{equation*}
A\{1,4^{\mathfrak{m}}\}=\left\{
   \begin{array}{c|c}
     A^{(1,4^{\mathfrak{m}})}+Z(I_m-AA^{(1,4^{\mathfrak{m}})}) &Z\in\mathbb{C}^{n\times m} \\
   \end{array}
 \right\},
\end{equation*}
 where $A^{(1,4^{\mathfrak{m}})}\in A\{1,4^{\mathfrak{m}}\}$  is  fixed  but arbitrary.
\end{theorem}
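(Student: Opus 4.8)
The plan is to derive the entire statement from its already-established dual for $\{1,3^{\mathfrak{m}}\}$-inverses, namely Theorem \ref{m13ineqconditionth}, by transporting everything through the involution $\sim$. First I would record precisely the transfer principle stated just before the theorem: since $\sim$ is an involution with $(MN)^{\sim}=N^{\sim}M^{\sim}$ and $(M^{\sim})^{\sim}=M$, a direct check shows that $X\in A\{1,4^{\mathfrak{m}}\}$ holds if and only if $X^{\sim}\in A^{\sim}\{1,3^{\mathfrak{m}}\}$. Indeed, $AXA=A$ is adjoint to $A^{\sim}X^{\sim}A^{\sim}=A^{\sim}$, while $(XA)^{\sim}=XA$ rewrites as $A^{\sim}X^{\sim}=(A^{\sim}X^{\sim})^{\sim}$, which is exactly the $3^{\mathfrak{m}}$-equation for the pair $(A^{\sim},X^{\sim})$.

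Next I would apply Theorem \ref{m13ineqconditionth} with $A$ replaced by $A^{\sim}\in\mathbb{C}^{n\times m}$ and $X$ by $X^{\sim}$. Its three equivalent conditions read $X^{\sim}\in A^{\sim}\{1,3^{\mathfrak{m}}\}$, then $(A^{\sim})^{\sim}A^{\sim}X^{\sim}=(A^{\sim})^{\sim}$, and finally $A^{\sim}X^{\sim}=P_{\mathcal{R}(A^{\sim}),\mathcal{N}((A^{\sim})^{\sim})}$. Using $(A^{\sim})^{\sim}=A$, the middle one becomes $AA^{\sim}X^{\sim}=A$, and taking its $\sim$-adjoint gives $XAA^{\sim}=A^{\sim}$, which is condition $(2)$. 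For the third condition I would take $\sim$-adjoints of both sides: the left side becomes $(A^{\sim}X^{\sim})^{\sim}=XA$, and the right side is handled by Lemma \ref{mprojectormgongeth}, which yields $\big(P_{\mathcal{R}(A^{\sim}),\mathcal{N}(A)}\big)^{\sim}=P_{\mathcal{R}(A^{\sim}),\mathcal{N}(A)}$; hence condition $(3)$, $XA=P_{\mathcal{R}(A^{\sim}),\mathcal{N}(A)}$, drops out. Together with the transfer principle this establishes $(1)\Leftrightarrow(2)\Leftrightarrow(3)$.

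Finally, for the parametrization of $A\{1,4^{\mathfrak{m}}\}$ I would adjoint the formula \eqref{m13alleq} applied to $A^{\sim}$, namely $X^{\sim}=(A^{\sim})^{(1,3^{\mathfrak{m}})}+\big(I_m-(A^{\sim})^{(1,3^{\mathfrak{m}})}A^{\sim}\big)Y$ with $Y$ arbitrary. Setting $A^{(1,4^{\mathfrak{m}})}:=\big((A^{\sim})^{(1,3^{\mathfrak{m}})}\big)^{\sim}$ (a genuine $\{1,4^{\mathfrak{m}}\}$-inverse by the transfer principle) and $Z:=Y^{\sim}$, and using $I_m^{\sim}=I_m$ together with $(MN)^{\sim}=N^{\sim}M^{\sim}$, the adjoint collapses to $X=A^{(1,4^{\mathfrak{m}})}+Z\big(I_m-AA^{(1,4^{\mathfrak{m}})}\big)$. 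Since $\sim$ is a bijection on the matrix space, $Z$ ranges over all of $\mathbb{C}^{n\times m}$ as $Y$ does, producing exactly the claimed set.

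The routine parts are purely bookkeeping; the one place demanding care is the adjoint of the projector. I must \emph{not} invoke $XA=A^{\sim}X^{\sim}$ while proving the equivalence, since that identity is only available once $4^{\mathfrak{m}}$ is already known. The third condition therefore has to be transported strictly through Lemma \ref{mprojectormgongeth}, which is precisely what makes $P_{\mathcal{R}(A^{\sim}),\mathcal{N}(A)}$ invariant under $\sim$ and lets condition $(3)$ for $A$ correspond to condition $(3)$ for $A^{\sim}$ with no circularity.
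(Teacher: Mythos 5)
Your proposal is correct and is exactly the route the paper intends: the paper states Theorem \ref{m14ineqconditionth} without proof, pointing only to the duality fact that $X\in A\{1,4^{\mathfrak{m}}\}$ if and only if $X^{\sim}\in A^{\sim}\{1,3^{\mathfrak{m}}\}$, which is precisely the transfer principle you formalize and then push through Theorem \ref{m13ineqconditionth}, Lemma \ref{mprojectormgongeth}, and formula \eqref{m13alleq}. Your write-up simply supplies the bookkeeping the paper leaves implicit, including the one genuinely delicate point — that $P_{\mathcal{R}(A^{\sim}),\mathcal{N}(A)}$ is $\sim$-invariant by Lemma \ref{mprojectormgongeth} rather than by an illicit appeal to the $4^{\mathfrak{m}}$ equation — so it matches the paper's argument in substance.
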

\par
\begin{theorem}
Let $A\in\mathbb{C}^{m\times n}_r$ with ${\rm rank}(AA^{\sim})=r>0$, and let   a full rank factorization of $A$ be $A =BC$, where $B\in\mathbb{C}^{m\times r}_r$ and $C\in\mathbb{C}^{r\times n}_r$. Then
\begin{equation*}
A\{1,2,4^{\mathfrak{m}}\}=\left\{
   \begin{array}{c|c}
    C^{\sim}(CC^{\sim})^{-1}B^{-1}_{L} &B^{-1}_{L}\in\mathbb{C}^{r\times m}_{r} \\
   \end{array}
 \right\}.
\end{equation*}
\end{theorem}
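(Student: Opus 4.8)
The plan is to deduce this from the already-established description of $A\{1,2,3^{\mathfrak{m}}\}$ by passing through Minkowski adjoints, exactly along the duality $X^{\sim}\in A^{\sim}\{1,3^{\mathfrak{m}}\}\Leftrightarrow X\in A\{1,4^{\mathfrak{m}}\}$ that was used to obtain the $4^{\mathfrak{m}}$-results from the $3^{\mathfrak{m}}$-results. First I would record the two elementary properties of the Minkowski adjoint that drive everything: it is anti-multiplicative, $(PQ)^{\sim}=Q^{\sim}P^{\sim}$, and involutory, $(M^{\sim})^{\sim}=M$ (both follow from $G^{2}=I$ and $G^{*}=G$ for each metric matrix). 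Since each defining equation $(1),(2),(4^{\mathfrak{m}})$ for $A$ with inverse $X$ transforms, under $X\mapsto X^{\sim}$, into the corresponding equation $(1),(2),(3^{\mathfrak{m}})$ for $A^{\sim}$ (e.g. $XAX=X\Leftrightarrow X^{\sim}A^{\sim}X^{\sim}=X^{\sim}$ and $(XA)^{\sim}=XA\Leftrightarrow(A^{\sim}X^{\sim})^{\sim}=A^{\sim}X^{\sim}$), I get $X\in A\{1,2,4^{\mathfrak{m}}\}$ if and only if $X^{\sim}\in A^{\sim}\{1,2,3^{\mathfrak{m}}\}$.

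Next I would set up the dual factorization. From $A=BC$ and anti-multiplicativity, $A^{\sim}=C^{\sim}B^{\sim}$, and since $\mathrm{rank}(C^{\sim})=\mathrm{rank}(B^{\sim})=r$ this is a full rank factorization of $A^{\sim}\in\mathbb{C}^{n\times m}_r$ with left factor $C^{\sim}\in\mathbb{C}^{n\times r}_r$ and right factor $B^{\sim}\in\mathbb{C}^{r\times m}_r$. The hypothesis $\mathrm{rank}(AA^{\sim})=r$ is precisely $\mathrm{rank}\big((A^{\sim})^{\sim}A^{\sim}\big)=\mathrm{rank}(A^{\sim})=r$, so the $\{1,2,3^{\mathfrak{m}}\}$ theorem applies verbatim to $A^{\sim}$. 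Reading off its conclusion with $B$ replaced by $C^{\sim}$ and $C$ replaced by $B^{\sim}$, and using $(C^{\sim})^{\sim}=C$ together with $(C^{\sim})^{\sim}C^{\sim}=CC^{\sim}$, gives
\[
A^{\sim}\{1,2,3^{\mathfrak{m}}\}=\{\,(B^{\sim})_R^{-1}(CC^{\sim})^{-1}C \ :\ (B^{\sim})_R^{-1}\in\mathbb{C}^{m\times r}_r\,\}.
\]

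Finally I would translate back. For $X\in A\{1,2,4^{\mathfrak{m}}\}$ we have $X^{\sim}=(B^{\sim})_R^{-1}(CC^{\sim})^{-1}C$, so applying $(\cdot)^{\sim}$ once more and using $\big((CC^{\sim})^{-1}\big)^{\sim}=(CC^{\sim})^{-1}$ (because $CC^{\sim}$ is Minkowski self-adjoint) yields $X=C^{\sim}(CC^{\sim})^{-1}\big[(B^{\sim})_R^{-1}\big]^{\sim}$. The remaining point is that $W\mapsto W^{\sim}$ is a bijection from the right inverses of $B^{\sim}$ onto the left inverses of $B$: if $B^{\sim}W=I_r$ then $W^{\sim}B=(B^{\sim}W)^{\sim}=I_r$, the map is its own inverse by involutivity, and it preserves the rank-$r$ constraint. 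Letting $B_L^{-1}=\big[(B^{\sim})_R^{-1}\big]^{\sim}$ range over $\mathbb{C}^{r\times m}_r$ then delivers the claimed equality. The only genuine care needed is the adjoint bookkeeping — tracking the different metric matrices of orders $m,n,r$ hidden inside each $(\cdot)^{\sim}$ and confirming the anti-multiplicative, involutive, and self-adjointness identities; nothing here is deep, but it is where an order mismatch could creep in. As a self-contained alternative one can mirror the $\{1,2,3^{\mathfrak{m}}\}$ proof directly: verify by substitution that $C^{\sim}(CC^{\sim})^{-1}B_L^{-1}\in A\{1,2,4^{\mathfrak{m}}\}$ (here $\mathrm{rank}(AA^{\sim})=\mathrm{rank}(CC^{\sim})=r$ guarantees $CC^{\sim}$ is invertible), and conversely write a general $\{1,2\}$-inverse as $H=C_R^{-1}B_L^{-1}$ and show that $(HA)^{\sim}=HA$, i.e. $(C_R^{-1}C)^{\sim}=C_R^{-1}C$, forces $C_R^{-1}=C^{\sim}(CC^{\sim})^{-1}$ (left-multiply by $C$ to obtain $CC^{\sim}(C_R^{-1})^{\sim}=C$, then take adjoints).
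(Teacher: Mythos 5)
Your proposal is correct and is essentially the paper's own argument: the paper gives no written proof of this theorem, deriving it precisely from the duality $X\in A\{1,2,4^{\mathfrak{m}}\}\Leftrightarrow X^{\sim}\in A^{\sim}\{1,2,3^{\mathfrak{m}}\}$ applied to the dual full rank factorization $A^{\sim}=C^{\sim}B^{\sim}$, which is exactly the translation you carry out (including the correct hypothesis match ${\rm rank}(AA^{\sim})={\rm rank}((A^{\sim})^{\sim}A^{\sim})$ and the bijection between right inverses of $B^{\sim}$ and left inverses of $B$). Your self-contained alternative simply mirrors the paper's proof of the $\{1,2,3^{\mathfrak{m}}\}$ theorem, so both of your routes coincide with the paper's approach.
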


\begin{remark}
 If $A\in\mathbb{C}^{m\times n}$ and $X\in A\{1,2\}$, we derive  \cite[Theorems 3 and  4]{minMPre} directly by Theorems \ref{m13ineqconditionth} and \ref{m14ineqconditionth}, respectively.
\end{remark}

\section{Characterizations of  the Minkowski inverse}\label{charactermimMPsec}
Based  on Lemma \ref{minMPexistconlem}, we start by proposing  several  different existence conditions of $A^{\mathfrak{m}}$ in the following theorem.
\begin{theorem}\label{mMPexitscondition}
Let $A\in\mathbb{C}^{m\times n}$. Then the following statements are equivalent:
\begin{enumerate}[$(1)$]
\item\label{minMPexistATSitem0} $A^{\mathfrak{m}}$ exists;
  \item\label{minMPexistATSitem1} ${\rm rank}(AA^{\sim})={\rm rank}(A^{\sim}A)={\rm rank}(A)$;
        \item\label{minMPexistATSitem3} ${\rm rank}({A^{\sim}}AA^{\sim})={\rm rank}(A)$;
  \item\label{minMPexistATSitem2} $A\mathcal{R}(A^{\sim})\oplus \mathcal{N}(A^{\sim})=\mathbb{C}^{m}$.
\end{enumerate}
\end{theorem}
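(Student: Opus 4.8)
The plan is to prove the cyclic chain of implications $\eqref{minMPexistATSitem0}\Rightarrow\eqref{minMPexistATSitem1}\Rightarrow\eqref{minMPexistATSitem3}\Rightarrow\eqref{minMPexistATSitem2}\Rightarrow\eqref{minMPexistATSitem0}$, since the equivalence $\eqref{minMPexistATSitem0}\Leftrightarrow\eqref{minMPexistATSitem1}$ is already furnished by Lemma \ref{minMPexistconlem} and may be invoked freely. Thus the genuinely new content is tying the single-rank condition \eqref{minMPexistATSitem3} and the direct-sum condition \eqref{minMPexistATSitem2} into the picture.

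For the step $\eqref{minMPexistATSitem1}\Rightarrow\eqref{minMPexistATSitem3}$, I would argue with rank inequalities. The chain ${\rm rank}(A^{\sim}AA^{\sim})\le{\rm rank}(AA^{\sim})={\rm rank}(A)$ is immediate, so the work is the reverse inequality ${\rm rank}(A^{\sim}AA^{\sim})\ge{\rm rank}(A)$. Here I would use that $\eqref{minMPexistATSitem1}$ gives both ${\rm rank}(A^{\sim}A)={\rm rank}(A)$ and ${\rm rank}(AA^{\sim})={\rm rank}(A)$, which force $\mathcal{R}(A^{\sim}A)=\mathcal{R}(A^{\sim})$ and $\mathcal{N}(AA^{\sim})=\mathcal{N}(A^{\sim})$ (equivalently $\mathcal{R}((A^{\sim}A)^{\ast})=\mathcal{R}(A^{\ast})$ on the appropriate side). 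Multiplying $A^{\sim}$ on the right by $AA^{\sim}$ does not drop rank precisely because $\mathcal{R}(AA^{\sim})=\mathcal{R}(A)$, so $A^{\sim}(AA^{\sim})$ retains the full rank of $A^{\sim}$. Conversely, for $\eqref{minMPexistATSitem3}\Rightarrow\eqref{minMPexistATSitem1}$ (needed only if one prefers a two-way link, but the cyclic route avoids it), one sandwiches: ${\rm rank}(A)\ge{\rm rank}(AA^{\sim})\ge{\rm rank}(A^{\sim}AA^{\sim})={\rm rank}(A)$ forces equality throughout, and symmetrically for $A^{\sim}A$. I would run the cyclic version to keep things short.

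For $\eqref{minMPexistATSitem1}\Rightarrow\eqref{minMPexistATSitem2}$, the idea is dimensional plus an intersection-triviality argument. Writing $r={\rm rank}(A)$, note ${\rm dim}\,\mathcal{R}(A^{\sim})=r$ under $\eqref{minMPexistATSitem1}$, and since multiplication by $A$ on $\mathcal{R}(A^{\sim})$ is injective exactly when ${\rm rank}(AA^{\sim})=r$, we get ${\rm dim}\,A\mathcal{R}(A^{\sim})=r$. Also ${\rm dim}\,\mathcal{N}(A^{\sim})=m-{\rm rank}(A^{\sim})=m-r$, so the dimensions add to $m$; it then suffices to show $A\mathcal{R}(A^{\sim})\cap\mathcal{N}(A^{\sim})=\{0\}$. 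A vector in the intersection has the form $Ax$ with $x\in\mathcal{R}(A^{\sim})$ and $A^{\sim}Ax=0$; but $\eqref{minMPexistATSitem1}$ gives $\mathcal{N}(A^{\sim}A)=\mathcal{N}(A)$, and on $\mathcal{R}(A^{\sim})$ the map $A$ is injective, forcing $x=0$. Finally $\eqref{minMPexistATSitem2}\Rightarrow\eqref{minMPexistATSitem0}$: the direct sum lets me define the projector $P_{A\mathcal{R}(A^{\sim}),\mathcal{N}(A^{\sim})}$, and recognizing $A\mathcal{R}(A^{\sim})=\mathcal{R}(A)$ (again using the injectivity that the direct sum secretly encodes), this projector is $P_{\mathcal{R}(A),\mathcal{N}(A^{\sim})}$, which by Theorem \ref{m13ineqconditionth} produces a $\{1,3^{\mathfrak{m}}\}$-inverse; combined with the symmetric $\{1,4^{\mathfrak{m}}\}$-construction and the product formula \eqref{minMPA13mA14meq}, $A^{\mathfrak{m}}$ exists.

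The main obstacle I anticipate is the step $\eqref{minMPexistATSitem2}\Rightarrow\eqref{minMPexistATSitem0}$, because the direct-sum hypothesis must be unpacked to recover the two separate rank conditions hidden inside it. The subtlety is that $A\mathcal{R}(A^{\sim})\oplus\mathcal{N}(A^{\sim})=\mathbb{C}^m$ does not a priori tell us that $\mathcal{R}(A^{\sim}A)$ has full rank $r$; one must deduce from the complementarity that $A$ restricted to $\mathcal{R}(A^{\sim})$ is injective (so ${\rm rank}(A^{\sim}A)=r$) and simultaneously that $A\mathcal{R}(A^{\sim})=\mathcal{R}(A)$ meets $\mathcal{N}(A^{\sim})$ trivially (giving ${\rm rank}(AA^{\sim})=r$). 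Getting both rank conditions cleanly out of the single geometric statement, without circular reasoning, is where I would spend the most care; the cleanest route is probably to show the direct sum forces the restriction $A|_{\mathcal{R}(A^{\sim})}$ to be a bijection onto $\mathcal{R}(A)$, then read off $\eqref{minMPexistATSitem1}$ and close the loop via Lemma \ref{minMPexistconlem}.
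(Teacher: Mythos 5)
Your plan announces the cycle \eqref{minMPexistATSitem0}$\Rightarrow$\eqref{minMPexistATSitem1}$\Rightarrow$\eqref{minMPexistATSitem3}$\Rightarrow$\eqref{minMPexistATSitem2}$\Rightarrow$\eqref{minMPexistATSitem0}, but the implications you actually supply are \eqref{minMPexistATSitem1}$\Rightarrow$\eqref{minMPexistATSitem3}, \eqref{minMPexistATSitem1}$\Rightarrow$\eqref{minMPexistATSitem2}, and \eqref{minMPexistATSitem2}$\Rightarrow$\eqref{minMPexistATSitem0}: in your third paragraph you derive the direct-sum condition from the two-rank condition \eqref{minMPexistATSitem1}, not from \eqref{minMPexistATSitem3} as the announced cycle requires. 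Since you also explicitly decline to include the sandwich argument for \eqref{minMPexistATSitem3}$\Rightarrow$\eqref{minMPexistATSitem1} (``the cyclic route avoids it''), your proof as planned has no implication leaving item \eqref{minMPexistATSitem3} at all: it is shown to be implied by the others, but never to imply any of them, so its equivalence is not established. This is a genuine gap, though an easily repaired one---either reinstate your (correct) sandwich ${\rm rank}(A)\geq{\rm rank}(AA^{\sim})\geq{\rm rank}(A^{\sim}AA^{\sim})={\rm rank}(A)$ together with its twin for $A^{\sim}A$, or prove \eqref{minMPexistATSitem3}$\Rightarrow$\eqref{minMPexistATSitem2} directly, as the paper does: from ${\rm rank}(A)={\rm rank}(A^{\sim}AA^{\sim})={\rm rank}(AA^{\sim})-{\rm dim}(\mathcal{R}(AA^{\sim})\cap\mathcal{N}(A^{\sim}))$ and ${\rm rank}(AA^{\sim})\leq{\rm rank}(A)$ one reads off simultaneously ${\rm rank}(AA^{\sim})={\rm rank}(A)$ and $\mathcal{R}(AA^{\sim})\cap\mathcal{N}(A^{\sim})=\{0\}$, which is exactly the direct sum.

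A second, smaller defect is in your closing step \eqref{minMPexistATSitem2}$\Rightarrow$\eqref{minMPexistATSitem0}: you invoke \eqref{minMPA13mA14meq} to conclude existence of $A^{\mathfrak{m}}$ from a $\{1,3^{\mathfrak{m}}\}$- and a $\{1,4^{\mathfrak{m}}\}$-inverse, but that formula is stated in the paper under the hypothesis that $A^{\mathfrak{m}}$ already exists, so citing it here is circular. The construction can be rescued, since $X=A^{(1,4^{\mathfrak{m}})}AA^{(1,3^{\mathfrak{m}})}$ is directly verified to satisfy all four defining equations whenever the two one-sided inverses exist, but you would need to carry out that verification. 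The cleaner alternative is the one you yourself suggest at the end: the direct sum forces $A|_{\mathcal{R}(A^{\sim})}$ to be a bijection onto $\mathcal{R}(A)$, which yields both rank equalities in \eqref{minMPexistATSitem1}, and Lemma \ref{minMPexistconlem} then closes the loop; this is precisely the paper's route (it returns to \eqref{minMPexistATSitem1} rather than to \eqref{minMPexistATSitem0}). Apart from these two architectural issues, your individual arguments (the range-based rank count for \eqref{minMPexistATSitem1}$\Rightarrow$\eqref{minMPexistATSitem3} and the dimension-plus-trivial-intersection argument for the direct sum) are correct and match the paper's in substance.
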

\par
\begin{proof}
\eqref{minMPexistATSitem0} $\Leftrightarrow$ \eqref{minMPexistATSitem1}.  It is straightforward in view of  Lemma \ref{minMPexistconlem}. \par
\eqref{minMPexistATSitem1} $\Rightarrow$ \eqref{minMPexistATSitem3}. Since $\mathcal{R}(A)\cap \mathcal{N}(A^{\sim})=\{0\}$ from ${\rm rank}(A^{\sim}A)={\rm rank}(A)$, it follows from ${\rm rank}(AA^{\sim})={\rm rank}(A)$ that
\begin{align*}
{\rm rank}({A^{\sim}}AA^{\sim})&={\rm rank}(AA^{\sim})-{\rm dim}(\mathcal{R}(AA^{\sim})\cap \mathcal{N}(A^{\sim}))\\
&={\rm rank}(A)-{\rm dim}(\mathcal{R}(A)\cap \mathcal{N}(A^{\sim}))\\
&={\rm rank}(A).
\end{align*}
\par
\eqref{minMPexistATSitem3} $\Rightarrow$ \eqref{minMPexistATSitem2}.
From
\begin{equation*}
 {\rm rank}(A)={\rm rank}({A^{\sim}}AA^{\sim})={\rm rank}(AA^{\sim})-{\rm dim}(\mathcal{R}(AA^{\sim})\cap \mathcal{N}(A^{\sim}))
\end{equation*}
 and   ${\rm rank}(A)\geq {\rm rank}(AA^{\sim})$,
we have $\mathcal{R}(AA^{\sim})\cap \mathcal{N}(A^{\sim})=\{0\}$ and ${\rm rank}(AA^{\sim})={\rm rank}(A)$, which imply that $A\mathcal{R}(A^{\sim})\oplus \mathcal{N}(A^{\sim})=\mathbb{C}^{m}$.
\par
\eqref{minMPexistATSitem2} $\Rightarrow$ \eqref{minMPexistATSitem1}.
It follows from $A\mathcal{R}(A^{\sim})\oplus \mathcal{N}(A^{\sim})=\mathbb{C}^{m}$ that $\mathcal{R}(AA^{\sim})\cap \mathcal{N}(A^{\sim})=\{0\}$ and ${\rm rank}(AA^{\sim})={\rm rank}(A)$. Thus, $\mathcal{R}(A)\cap \mathcal{N}(A^{\sim})=\{0\}$, that is, ${\rm rank}(A^{\sim}A)={\rm rank}(A)$.
\end{proof}
\par
Subsequently, if $A^{\mathfrak{m}}$ exists for $A\in\mathbb{C}^{m\times n}$, applying Lemma \ref{ATS2Gchale} to
\eqref{minMPATS2} in Remark \ref{minMPATS2rem}, we directly obtain a new expression of the Minkowski inverse, $A^{\mathfrak{m}}=(A^{\sim}A)^{\#}A^{\sim}=A^{\sim}(AA^{\sim})^{\#}$, and
\begin{equation}\label{indAAeq}
{\rm Ind}(AA^{\sim})={\rm Ind}(A^{\sim}A)=1.
\end{equation}
However, for a matrix $A\in\mathbb{C}^{m\times n}$ satisfying  \eqref{indAAeq}, $A^{\mathfrak{m}}$  does not necessarily exist,  as will be shown in the following example.

\begin{example}
Let
\begin{equation*}
A= \left(
  \begin{array}{ccccc}
1&0&1&1\\0&0&1&0\\1&0&1&1\\0&0&0&0\\0&0&0&0\\
  \end{array}
\right).
\end{equation*}
It can be verified that ${\rm rank}(A^{\sim}AA^{\sim})={\rm rank}((A^{\sim}A)^{2})={\rm rank}(A^{\sim}A)={\rm rank}((AA^{\sim})^{2})={\rm rank}(AA^{\sim})=1$ and ${\rm rank}(A)=2$. Obviously, ${\rm Ind}(A^{\sim}A)={\rm Ind}(AA^{\sim})=1$, but ${\rm rank}(A^{\sim}AA^{\sim})\neq {\rm rank}(A)$, implying that $A^{\mathfrak{m}}$ does not exist.
\end{example}
\par
In the next theorem  we present  some   necessary and sufficient conditions for  the converse implication.
\begin{theorem}\label{IndchaAmMPth}
Let $A\in\mathbb{C}^{m\times n}$. Then the following statements are equivalent:
\begin{enumerate}[$(1)$]
  \item\label{IndchaAmMPit1} $A^{\mathfrak{m}}$ exists;
   \item\label{IndchaAmMPit2} ${\rm Ind}(A^{\sim}A)=1$ and $\mathcal{N}(A^{\sim}A)\subseteq \mathcal{N}(A)$;
  \item\label{IndchaAmMPit3}  ${\rm Ind}(AA^{\sim})=1$  and $\mathcal{R}(A)\subseteq \mathcal{R}(AA^{\sim}) $.
\end{enumerate}
\end{theorem}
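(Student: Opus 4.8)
The plan is to establish the two equivalences $\eqref{IndchaAmMPit1}\Leftrightarrow\eqref{IndchaAmMPit2}$ and $\eqref{IndchaAmMPit1}\Leftrightarrow\eqref{IndchaAmMPit3}$ separately, obtaining the second one for free from the first by exploiting the symmetry $A\leftrightarrow A^{\sim}$. Indeed, since $A^{\sim\sim}=A$ and ${\rm rank}(A^{\sim})={\rm rank}(A)$, Lemma \ref{minMPexistconlem} shows that $A^{\mathfrak{m}}$ exists if and only if $(A^{\sim})^{\mathfrak{m}}$ exists, and statement $\eqref{IndchaAmMPit2}$ written for $A^{\sim}$ reads ``${\rm Ind}(AA^{\sim})=1$ and $\mathcal{N}(AA^{\sim})\subseteq\mathcal{N}(A^{\sim})$''. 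Because $\mathcal{N}(A^{\sim})\subseteq\mathcal{N}(AA^{\sim})$ and $\mathcal{R}(AA^{\sim})\subseteq\mathcal{R}(A)$ hold trivially, each of the conditions $\mathcal{N}(AA^{\sim})\subseteq\mathcal{N}(A^{\sim})$ and $\mathcal{R}(A)\subseteq\mathcal{R}(AA^{\sim})$ is equivalent to ${\rm rank}(AA^{\sim})={\rm rank}(A)$; hence statement $\eqref{IndchaAmMPit3}$ for $A$ coincides with statement $\eqref{IndchaAmMPit2}$ for $A^{\sim}$, and $\eqref{IndchaAmMPit1}\Leftrightarrow\eqref{IndchaAmMPit3}$ follows by applying $\eqref{IndchaAmMPit1}\Leftrightarrow\eqref{IndchaAmMPit2}$ to $A^{\sim}$. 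So it remains only to prove $\eqref{IndchaAmMPit1}\Leftrightarrow\eqref{IndchaAmMPit2}$.

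For the direction $\eqref{IndchaAmMPit1}\Rightarrow\eqref{IndchaAmMPit2}$, I would start from Lemma \ref{minMPexistconlem}, which gives ${\rm rank}(A^{\sim}A)={\rm rank}(A)$; combined with the always-valid inclusion $\mathcal{N}(A)\subseteq\mathcal{N}(A^{\sim}A)$ this forces $\mathcal{N}(A^{\sim}A)=\mathcal{N}(A)$, in particular $\mathcal{N}(A^{\sim}A)\subseteq\mathcal{N}(A)$. The index condition ${\rm Ind}(A^{\sim}A)=1$ is then exactly \eqref{indAAeq}, already recorded just before the statement.

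The substance of the theorem lies in $\eqref{IndchaAmMPit2}\Rightarrow\eqref{IndchaAmMPit1}$, and this is where I expect the only real work. From $\mathcal{N}(A^{\sim}A)\subseteq\mathcal{N}(A)$ together with $\mathcal{N}(A)\subseteq\mathcal{N}(A^{\sim}A)$ I again get ${\rm rank}(A^{\sim}A)={\rm rank}(A)=:r$, while ${\rm Ind}(A^{\sim}A)=1$ translates into ${\rm rank}\big((A^{\sim}A)^{2}\big)={\rm rank}(A^{\sim}A)=r$. The key idea is to squeeze ${\rm rank}(A^{\sim}AA^{\sim})$ between these quantities via the elementary bound ${\rm rank}(XY)\le{\rm rank}(X)$: writing $(A^{\sim}A)^{2}=(A^{\sim}AA^{\sim})A$ and $A^{\sim}AA^{\sim}=(A^{\sim}A)A^{\sim}$ gives
\begin{equation*}
r={\rm rank}\big((A^{\sim}A)^{2}\big)\le{\rm rank}(A^{\sim}AA^{\sim})\le{\rm rank}(A^{\sim}A)=r,
\end{equation*}
so ${\rm rank}(A^{\sim}AA^{\sim})=r={\rm rank}(A)$. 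Invoking the equivalence $\eqref{minMPexistATSitem3}\Leftrightarrow\eqref{minMPexistATSitem0}$ of Theorem \ref{mMPexitscondition} then delivers the existence of $A^{\mathfrak{m}}$, completing the argument. The main obstacle is spotting this sandwich; once the factorizations $(A^{\sim}A)^{2}=(A^{\sim}AA^{\sim})A$ and $A^{\sim}AA^{\sim}=(A^{\sim}A)A^{\sim}$ are in place the rank inequalities do the rest, and any delicate bookkeeping of ${\rm rank}(AA^{\sim})$ is bypassed by routing everything through the single quantity ${\rm rank}(A^{\sim}AA^{\sim})$ supplied by Theorem \ref{mMPexitscondition}.
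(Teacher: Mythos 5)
Your proposal is correct, and its core argument is genuinely different from the paper's. The forward direction $\eqref{IndchaAmMPit1}\Rightarrow\eqref{IndchaAmMPit2}$ matches the paper (both rest on Lemma~\ref{minMPexistconlem} together with \eqref{indAAeq}, i.e.\ Lemma~\ref{ATS2Gchale}), but for the substantive converse the two routes diverge: the paper argues geometrically, identifying $\mathcal{R}(A^{\sim}A)=\mathcal{R}(A^{\sim})$ and $\mathcal{N}(A^{\sim}A)=\mathcal{N}(A)$ from the rank equality so that ${\rm Ind}(A^{\sim}A)=1$ yields $\mathcal{R}(A^{\sim})\cap\mathcal{N}(A)=\mathcal{R}(A^{\sim}A)\cap\mathcal{N}(A^{\sim}A)=\{0\}$, hence ${\rm rank}(AA^{\sim})={\rm rank}(A)$, and then invokes Lemma~\ref{minMPexistconlem}; you instead run the purely rank-arithmetic sandwich $r={\rm rank}\bigl((A^{\sim}A)^{2}\bigr)\le{\rm rank}(A^{\sim}AA^{\sim})\le{\rm rank}(A^{\sim}A)=r$ and appeal to the single criterion \eqref{minMPexistATSitem3} of Theorem~\ref{mMPexitscondition}, which is legitimate since that theorem is established before this one, so there is no circularity. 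Your handling of $\eqref{IndchaAmMPit1}\Leftrightarrow\eqref{IndchaAmMPit3}$ is also more explicit than the paper's: where the paper merely declares the proof ``similar'', you reduce statement \eqref{IndchaAmMPit3} for $A$ to statement \eqref{IndchaAmMPit2} for $A^{\sim}$ (using that each of $\mathcal{N}(AA^{\sim})\subseteq\mathcal{N}(A^{\sim})$ and $\mathcal{R}(A)\subseteq\mathcal{R}(AA^{\sim})$ is equivalent to ${\rm rank}(AA^{\sim})={\rm rank}(A)$, and that $A^{\mathfrak{m}}$ exists iff $(A^{\sim})^{\mathfrak{m}}$ exists), so the second equivalence comes for free by duality. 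What the paper's route buys is a transparent view of how both rank conditions of Lemma~\ref{minMPexistconlem} emerge from the index hypothesis; what yours buys is brevity and the avoidance of any reasoning about intersections of ranges and null spaces.
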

\par
\begin{proof}
\eqref{IndchaAmMPit1} $\Leftrightarrow$ \eqref{IndchaAmMPit2}. ``$\Rightarrow$". It is obvious by Lemmas \ref{minMPexistconlem} and \ref{ATS2Gchale}.  ``$\Leftarrow$". Since ${\rm rank}(A)={\rm rank}(A^{\sim}A)$ from $\mathcal{N}(A^{\sim}A)\subseteq \mathcal{N}(A)$, it follows from ${\rm Ind}(A^{\sim}A)=1$ that $\mathcal{R}(A^{\sim})\cap \mathcal{N}(A)=\mathcal{R}(A^{\sim}A)\cap \mathcal{N}(A^{\sim}A)=\{0\}$, which implies that ${\rm rank}(AA^{\sim})={\rm rank}(A)$. Hence $A^{\mathfrak{m}}$ exists directly by Lemma \ref{minMPexistconlem}.\par
\eqref{IndchaAmMPit1} $\Leftrightarrow$ \eqref{IndchaAmMPit3}. Its proof is similar to that of \eqref{IndchaAmMPit1} $\Leftrightarrow$ \eqref{IndchaAmMPit2}.
\end{proof}
\par
As we all know,   Moore \cite{MooredefMPre}, Penrose \cite{penrosere}, and Desoer and Whalen \cite{DWdefMPre}  defined  the Moore-Penrose inverse  from different perspectives,  respectively. Next, we  review these definitions in the following lemma, and extend this result to the Minkowski inverse.

\begin{lemma}[Desoer-Whalen's and Moore's definitions  \cite{DWdefMPre,MooredefMPre}]
Let $A\in\mathbb{C}^{m\times n}$ and $X\in\mathbb{C}^{n\times m}$. Then the following statements are equivalent:
\begin{enumerate}[$(1)$]
  \item $X=A^{\dag}$;
  \item $XAa=a$ for $a\in\mathcal{R}({A^*})$, and $Xb=0$ for $b\in\mathcal{N}(A^*)$;
  \item $AX=P_{\mathcal{R}(A)}$, $XA=P_{\mathcal{R}(X)}$.
\end{enumerate}
\end{lemma}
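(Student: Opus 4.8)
The plan is to prove the three statements equivalent through the cyclic chain $(1)\Rightarrow(2)\Rightarrow(3)\Rightarrow(1)$, using throughout the orthogonal direct-sum decompositions $\mathbb{C}^{n}=\mathcal{R}(A^{*})\oplus\mathcal{N}(A)$ and $\mathbb{C}^{m}=\mathcal{R}(A)\oplus\mathcal{N}(A^{*})$, together with the standard identities $\mathcal{R}(A^{\dag})=\mathcal{R}(A^{*})$, $\mathcal{N}(A^{\dag})=\mathcal{N}(A^{*})$, $A^{\dag}A=P_{\mathcal{R}(A^{*})}$, and $AA^{\dag}=P_{\mathcal{R}(A)}$. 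These reduce each implication to elementary bookkeeping on the two summands, since $\mathcal{R}(A^{*})=\mathcal{N}(A)^{\perp}$ and $\mathcal{R}(A)=\mathcal{N}(A^{*})^{\perp}$.

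First I would dispatch the two easy implications. For $(1)\Rightarrow(2)$, assuming $X=A^{\dag}$: every $a\in\mathcal{R}(A^{*})$ satisfies $XAa=P_{\mathcal{R}(A^{*})}a=a$, while $\mathcal{N}(A^{\dag})=\mathcal{N}(A^{*})$ forces $Xb=0$ for each $b\in\mathcal{N}(A^{*})$. For $(3)\Rightarrow(1)$, since $P_{\mathcal{R}(A)}$ and $P_{\mathcal{R}(X)}$ are orthogonal projectors and hence Hermitian, the hypotheses $AX=P_{\mathcal{R}(A)}$ and $XA=P_{\mathcal{R}(X)}$ immediately give $(AX)^{*}=AX$ and $(XA)^{*}=XA$, i.e.\ Penrose equations $(3)$ and $(4)$; then $AXA=P_{\mathcal{R}(A)}A=A$ and $XAX=P_{\mathcal{R}(X)}X=X$ supply $(1)$ and $(2)$, so uniqueness of the Moore--Penrose inverse yields $X=A^{\dag}$.

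The substantive step is $(2)\Rightarrow(3)$. From the first half of $(2)$, $XA$ fixes $\mathcal{R}(A^{*})$ pointwise; since $Av=0$ for $v\in\mathcal{N}(A)$, it annihilates $\mathcal{N}(A)$; hence along $\mathbb{C}^{n}=\mathcal{R}(A^{*})\oplus\mathcal{N}(A)$ we obtain $XA=P_{\mathcal{R}(A^{*})}$. To rewrite this projector in the form $P_{\mathcal{R}(X)}$ demanded by $(3)$, I would prove $\mathcal{R}(X)=\mathcal{R}(A^{*})$: the inclusion $\mathcal{R}(A^{*})=\mathcal{R}(XA)\subseteq\mathcal{R}(X)$ is clear, and conversely, decomposing any $y\in\mathbb{C}^{m}$ as $y=Ac+b$ with $c\in\mathcal{R}(A^{*})$ and $b\in\mathcal{N}(A^{*})$, the two halves of $(2)$ give $Xy=XAc+Xb=c\in\mathcal{R}(A^{*})$. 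The same decomposition yields the remaining identity, since $AXy=AXAc+AXb=Ac=P_{\mathcal{R}(A)}y$.

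I expect the identification $\mathcal{R}(X)=\mathcal{R}(A^{*})$ to be the only genuine obstacle, as it is precisely what converts the naturally occurring orthogonal projector $P_{\mathcal{R}(A^{*})}$ into the form $P_{\mathcal{R}(X)}$; once the two orthogonal splittings are in place, every other verification is a one-line computation on the summands.
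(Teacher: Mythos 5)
Your proof is correct, but there is nothing in the paper to compare it against line by line: the paper states this lemma as a quoted classical result of Moore and Desoer--Whalen and gives no proof of it. The cyclic chain you chose, and each individual step — $A^{\dag}A=P_{\mathcal{R}(A^*)}$ and $\mathcal{N}(A^{\dag})=\mathcal{N}(A^*)$ for $(1)\Rightarrow(2)$; the pointwise analysis on the orthogonal splittings $\mathbb{C}^{n}=\mathcal{R}(A^{*})\oplus\mathcal{N}(A)$, $\mathbb{C}^{m}=\mathcal{R}(A)\oplus\mathcal{N}(A^{*})$ for $(2)\Rightarrow(3)$, including the identification $\mathcal{R}(X)=\mathcal{R}(A^{*})$; and Hermitianness of orthogonal projectors plus uniqueness of $A^{\dag}$ for $(3)\Rightarrow(1)$ — is sound. (The only point you use silently is $\mathcal{R}(A)=A\mathcal{R}(A^{*})$, needed to write $y=Ac+b$ with $c\in\mathcal{R}(A^{*})$; it follows in one line from your first splitting, so it is a presentational remark, not a gap.) The instructive comparison is with the paper's proof of the Minkowski analogue, Theorem \ref{threedefAmMPth}: there the authors avoid orthogonal decompositions entirely, instead condensing condition $(2)$ into the matrix identity $XAA^{\sim}=A^{\sim}$, running a rank count to force $\mathcal{R}(X)=\mathcal{R}(A^{\sim})$ and $\mathcal{N}(X)=\mathcal{N}(A^{\sim})$, and then invoking the characterization $A^{\mathfrak{m}}=A^{(1,2)}_{\mathcal{R}(A^{\sim}),\mathcal{N}(A^{\sim})}$ of Remark \ref{minMPATS2rem}. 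Your argument is genuinely tied to the Hermitian setting: the crux of your $(3)\Rightarrow(1)$ is that $P_{\mathcal{R}(A)}$ and $P_{\mathcal{R}(X)}$ are \emph{orthogonal} projectors and hence Hermitian, which is precisely what fails in Minkowski space, where the corresponding projectors are oblique; this is why the paper's Theorem \ref{threedefAmMPth} must add the extra hypothesis $\mathcal{R}(X)\subseteq\mathcal{R}(A^{\sim})$, and why Example \ref{minMPmooredefce} exhibits $X\neq A^{\mathfrak{m}}$ satisfying the two projector equations alone. So your proof buys simplicity and geometric transparency in the classical case, while the paper's rank-and-$A^{(2)}_{\mathcal{T},\mathcal{S}}$ technique is the one that survives the passage to the indefinite metric.
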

 \par
There  is  an interesting   example showing that for some matrices $A\in\mathbb{C}^{m\times n}$ and $X\in\mathbb{C}^{n\times m}$,  $X$ may be not equivalent with  $A^{\mathfrak{m}}$ though $AX=P_{\mathcal{R}(A),\mathcal{N}(A^{\sim})}$ and  $XA=P_{\mathcal{R}(X),\mathcal{N}(A)}$.

\begin{example}\label{minMPmooredefce}
Let us consider the matrices
\begin{equation*}
  A=\left(
  \begin{array}{ccccc}
1&1&1&0&1\\0&1&0&1&0\\1&1&0&0&1\\0&0&0&0&0\\0&0&0&0&0
  \end{array}
\right),
X=\left(
  \begin{array}{ccccc}
0&-0.2&0.4&0&0\\0&0.4&0.2&0&0\\1&0&-1&0&0\\0&0.6&-0.2&0&0\\0&-0.2&0.4&0&0\\
  \end{array}
\right).
\end{equation*}
By calculation, we have ${\rm rank}(A^{\sim}AA^{\sim})={\rm rank}(A)=3$ and
\begin{equation*}
  A^{\mathfrak{m}}=\left(
  \begin{array}{ccccc}
  0&1&-2&0&0\\0&0&1&0&0\\1&0&-1&0&0\\0&1&-1&0&0\\0&-1&2&0&0\\
  \end{array}
\right).
\end{equation*}
Evidently, $X\neq A^{\mathfrak{m}}$. However, we can check  that  $X\in A\{1,2\}$ and $\mathcal{N}(X)=\mathcal{N}(A^{\sim})$, which imply that  $AX=P_{\mathcal{R}(A),\mathcal{N}(A^{\sim})}$ and  $XA=P_{\mathcal{R}(X),\mathcal{N}(A)}$.
\end{example}
\par
\begin{theorem}\label{threedefAmMPth}
Let $A\in\mathbb{C}^{m\times n}$ and $X\in\mathbb{C}^{n\times m}$. Then the following statements are equivalent:
\begin{enumerate}[$(1)$]
\item\label{threedefAmMPit3} $X=A^{\mathfrak{m}}$;
  \item\label{threedefAmMPit1} $XAa=a$ for $a\in\mathcal{R}({A^{\sim}})$, and $Xb=0$ for $b\in\mathcal{N}(A^{\sim})$;
  \item\label{threedefAmMPit2} $AX=P_{\mathcal{R}(A),\mathcal{N}(A^{\sim})}$, $XA=P_{\mathcal{R}(X),\mathcal{N}(A)}$ and $\mathcal{R}(X)\subseteq \mathcal{R}(A^{\sim})$.
\end{enumerate}
\end{theorem}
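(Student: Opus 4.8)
The plan is to establish the cyclic chain $(1)\Rightarrow(2)\Rightarrow(3)\Rightarrow(1)$. Throughout I would lean on three ingredients: Lemma~\ref{mMPprole}, which (once $A^{\mathfrak{m}}$ is known to exist) supplies $\mathcal{R}(A^{\mathfrak{m}})=\mathcal{R}(A^{\sim})$, $\mathcal{N}(A^{\mathfrak{m}})=\mathcal{N}(A^{\sim})$, $AA^{\mathfrak{m}}=P_{\mathcal{R}(A),\mathcal{N}(A^{\sim})}$ and $A^{\mathfrak{m}}A=P_{\mathcal{R}(A^{\sim}),\mathcal{N}(A)}$; the existence criterion of Lemma~\ref{minMPexistconlem}, which I would rephrase as the two intersection conditions $\mathcal{R}(A)\cap\mathcal{N}(A^{\sim})=\{0\}$ and $\mathcal{R}(A^{\sim})\cap\mathcal{N}(A)=\{0\}$ (jointly equivalent to ${\rm rank}(A^{\sim}A)={\rm rank}(AA^{\sim})={\rm rank}(A)$); and Lemma~\ref{mprojectormgongeth}, which computes the Minkowski adjoint of an oblique projector. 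The implication $(1)\Rightarrow(2)$ is then immediate: if $X=A^{\mathfrak{m}}$, Lemma~\ref{mMPprole} gives $XA=P_{\mathcal{R}(A^{\sim}),\mathcal{N}(A)}$, so $XAa=a$ for $a\in\mathcal{R}(A^{\sim})$, while $\mathcal{N}(X)=\mathcal{N}(A^{\sim})$ yields $Xb=0$ for $b\in\mathcal{N}(A^{\sim})$.

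For $(2)\Rightarrow(3)$ the first task is to prove that $A^{\mathfrak{m}}$ exists. The first half of (2) forces $\mathcal{R}(A^{\sim})\cap\mathcal{N}(A)=\{0\}$, since $a\in\mathcal{R}(A^{\sim})$ with $Aa=0$ gives $a=XAa=0$; a dimension count then upgrades this to $\mathcal{R}(A^{\sim})\oplus\mathcal{N}(A)=\mathbb{C}^{n}$, i.e. ${\rm rank}(AA^{\sim})={\rm rank}(A)$. The crux is the second intersection condition $\mathcal{R}(A)\cap\mathcal{N}(A^{\sim})=\{0\}$: given $z=Aw$ lying in $\mathcal{R}(A)\cap\mathcal{N}(A^{\sim})$, I would split $w=w_1+w_2$ along the direct sum just obtained, with $w_1\in\mathcal{R}(A^{\sim})$ and $w_2\in\mathcal{N}(A)$, so that $z=Aw_1$; the second half of (2) gives $Xz=0$, whereas the first half gives $Xz=XAw_1=w_1$, forcing $w_1=0$ and hence $z=0$. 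With both intersection conditions in hand, Lemma~\ref{minMPexistconlem} produces $A^{\mathfrak{m}}$. To finish I would show $X=A^{\mathfrak{m}}$: condition (2) makes $XA=P_{\mathcal{R}(A^{\sim}),\mathcal{N}(A)}=A^{\mathfrak{m}}A$, so $X$ and $A^{\mathfrak{m}}$ agree on $\mathcal{R}(A)$, and both annihilate $\mathcal{N}(A^{\sim})$; since $\mathbb{C}^{m}=\mathcal{R}(A)\oplus\mathcal{N}(A^{\sim})$, this gives $X=A^{\mathfrak{m}}$, whereupon all three assertions in (3) follow from Lemma~\ref{mMPprole} together with $\mathcal{R}(X)=\mathcal{R}(A^{\sim})$.

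For $(3)\Rightarrow(1)$ I would verify the four defining equations of Definition~\ref{minMPdefintion}. The identities $AXA=A$ and $XAX=X$ hold because $\mathcal{R}(A)$ and $\mathcal{R}(X)$ are, respectively, the ranges of the two projectors in (3). Equation $(3^{\mathfrak{m}})$ is read off directly from Lemma~\ref{mprojectormgongeth}, which gives $(P_{\mathcal{R}(A),\mathcal{N}(A^{\sim})})^{\sim}=P_{\mathcal{R}(A),\mathcal{N}(A^{\sim})}$. For $(4^{\mathfrak{m}})$ the inclusion (c) must first be sharpened to the equality $\mathcal{R}(X)=\mathcal{R}(A^{\sim})$: the projector $P_{\mathcal{R}(X),\mathcal{N}(A)}$ forces ${\rm dim}(\mathcal{R}(X))=n-{\rm dim}(\mathcal{N}(A))={\rm rank}(A)={\rm dim}(\mathcal{R}(A^{\sim}))$, so the inclusion is an equality; rewriting $XA=P_{\mathcal{R}(A^{\sim}),\mathcal{N}(A)}$ and applying Lemma~\ref{mprojectormgongeth} again yields $(XA)^{\sim}=XA$. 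Thus $X$ satisfies $(1),(2),(3^{\mathfrak{m}}),(4^{\mathfrak{m}})$, and $X=A^{\mathfrak{m}}$ by uniqueness.

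The step I expect to be the main obstacle is the existence argument inside $(2)\Rightarrow(3)$, and specifically the derivation of $\mathcal{R}(A)\cap\mathcal{N}(A^{\sim})=\{0\}$: unlike the easy intersection condition, it genuinely requires the two halves of (2) to be used together. This is precisely the gap exhibited by Example~\ref{minMPmooredefce}, where the weaker data $AX=P_{\mathcal{R}(A),\mathcal{N}(A^{\sim})}$ and $XA=P_{\mathcal{R}(X),\mathcal{N}(A)}$ hold yet $X\neq A^{\mathfrak{m}}$; the pointwise identity in (2) and the range restriction (c) in (3) are exactly what rule out such $X$, and keeping track of which condition supplies which half of the existence proof is the delicate part of the write-up.
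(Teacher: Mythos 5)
Your proof is correct, and it follows the same cyclic skeleton $(1)\Rightarrow(2)\Rightarrow(3)\Rightarrow(1)$ as the paper, but both nontrivial implications are argued with different machinery. For $(2)\Rightarrow(3)$ the paper never mentions the existence of $A^{\mathfrak{m}}$: it converts the pointwise hypotheses into $XAA^{\sim}=A^{\sim}$ and $\mathcal{N}(A^{\sim})\subseteq\mathcal{N}(X)$, deduces $\mathcal{R}(X)=\mathcal{R}(A^{\sim})$ and $\mathcal{N}(X)=\mathcal{N}(A^{\sim})$ by a rank count, shows $X\in A\{1,2\}$, and reads the projector identities off from that; you instead prove the two intersection conditions $\mathcal{R}(A^{\sim})\cap\mathcal{N}(A)=\{0\}$ and $\mathcal{R}(A)\cap\mathcal{N}(A^{\sim})=\{0\}$ (your splitting argument for the second is exactly the point the paper leaves implicit), invoke Lemma~\ref{minMPexistconlem} to obtain $A^{\mathfrak{m}}$, and identify $X=A^{\mathfrak{m}}$ on the decomposition $\mathbb{C}^m=\mathcal{R}(A)\oplus\mathcal{N}(A^{\sim})$ --- in effect you prove $(2)\Rightarrow(1)$ and then $(1)\Rightarrow(3)$ via Lemma~\ref{mMPprole}. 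For $(3)\Rightarrow(1)$ the paper shows $X\in A\{1,2\}$ with $\mathcal{R}(X)=\mathcal{R}(A^{\sim})$, $\mathcal{N}(X)=\mathcal{N}(A^{\sim})$ and appeals to the uniqueness of $A^{(1,2)}_{\mathcal{R}(A^{\sim}),\mathcal{N}(A^{\sim})}$ through Remark~\ref{minMPATS2rem}, whereas you verify the four defining equations of Definition~\ref{minMPdefintion} directly, using Lemma~\ref{mprojectormgongeth} twice (a lemma the paper deploys only for Theorem~\ref{m13ineqconditionth}) and concluding by uniqueness of the Minkowski inverse; your sharpening of the inclusion $\mathcal{R}(X)\subseteq\mathcal{R}(A^{\sim})$ to an equality by the dimension forced by $P_{\mathcal{R}(X),\mathcal{N}(A)}$ is needed and correct. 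The trade-off: your route is slightly longer but more self-contained and makes the existence question explicit, whereas the paper's appeal to Remark~\ref{minMPATS2rem} tacitly presupposes the rank conditions; the paper's route is shorter because it reuses the $A^{(2)}_{\mathcal{T},\mathcal{S}}$ uniqueness machinery already set up in Section~\ref{Preliminariessec}.
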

\par
\begin{proof}\eqref{threedefAmMPit3} $\Rightarrow$ \eqref{threedefAmMPit1}. It is obvious by Lemma \ref{mMPprole}. \par
\eqref{threedefAmMPit1} $\Rightarrow$ \eqref{threedefAmMPit2}.
It follows from $XAa=a$ for $a\in\mathcal{R}({A^{\sim}})$ that $XAA^{\sim}=A^{\sim}$, which shows that ${\rm rank}(A^{\sim})\leq {\rm rank}(X)$ and $\mathcal{R}(A^{\sim})\subseteq\mathcal{R}(X)$. And, from $Xb=0$ for $b\in\mathcal{N}(A^{\sim})$, we have $\mathcal{N}(A^{\sim})\subseteq\mathcal{N}(X)$, implying ${\rm rank}(X)\leq{\rm rank}(A^{\sim})$. Thus,  ${\rm rank}(X)={\rm rank}(A^{\sim})$, $\mathcal{R}(X)=\mathcal{R}(A^{\sim})$ and $\mathcal{N}(X)=\mathcal{N}(A^{\sim})$. Hence, again by $XAa=a$ for $a\in\mathcal{R}({A^{\sim}})=\mathcal{R}({X})$, we have $X\in A\{1,2\}$, which implies that the item \eqref{threedefAmMPit2} holds. \par
\eqref{threedefAmMPit2} $\Rightarrow$ \eqref{threedefAmMPit3}. Clearly, $AXA=P_{\mathcal{R}(A),\mathcal{N}(A^{\sim})}A=A$ and $XAX=P_{\mathcal{R}(X),\mathcal{N}(A)}X=X$, i.e., $X\in A\{1,2\}$. Then, from $AX=P_{\mathcal{R}(A),\mathcal{N}(A^{\sim})}$ and $\mathcal{R}(X)\subseteq \mathcal{R}(A^{\sim})$, we have $\mathcal{N}(X)=\mathcal{N}(A^{\sim})$ and $\mathcal{R}(X)= \mathcal{R}(A^{\sim})$. Hence, in view of
\eqref{minMPATS2} in Remark \ref{minMPATS2rem}, we see $X=A^{\mathfrak{m}}$.
\end{proof}
\par
A classic   characterization of the Moore-Penrose inverse proposed by  Bjerhammar is extended to the Minkowski inverse  in the following theorem.

\begin{theorem}\label{AmMPBjeharmmerth}
Let $A\in\mathbb{C}^{m\times n}$ with ${\rm rank}({A^{\sim}}AA^{\sim})={\rm rank}(A)$, and let $X\in\mathbb{C}^{n\times m}$. Then the following statements are equivalent:
\begin{enumerate}[$(1)$]
  \item\label{AmMPBjeharmmer1} $X=A^{\mathfrak{m}}$;
  \item\label{AmMPBjeharmmer2} There exist $B\in\mathbb{C}^{m\times m}$ and $C\in\mathbb{C}^{n\times n}$ such that  $AXA=A,X=A^{\sim}B,X=CA^{\sim}$.
\end{enumerate}
Moreover,
\begin{align*}
 B&=(A^{\sim})^{(1)}A^{\mathfrak{m}}+(I_m-(A^{\sim})^{(1)}A^{\sim})Y,\\ C&=A^{\mathfrak{m}}(A^{\sim})^{(1)}+Z(I_n-A^{\sim}(A^{\sim})^{(1)}),
\end{align*}
where $Y\in\mathbb{C}^{m\times m}$ and $Z\in\mathbb{C}^{n\times n}$ are arbitrary, and $(A^{\sim})^{(1)}\in (A^{\sim})\{1\}$.
\end{theorem}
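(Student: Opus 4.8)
The plan is to convert the two one-sided factorizations in \eqref{AmMPBjeharmmer2} into range and null-space conditions and then to match them against the identity $A^{\mathfrak{m}}=A^{(1,2)}_{\mathcal{R}(A^{\sim}),\mathcal{N}(A^{\sim})}$ from Remark \ref{minMPATS2rem}. First I would record the dictionary: $X=A^{\sim}B$ for some $B$ is equivalent to $\mathcal{R}(X)\subseteq\mathcal{R}(A^{\sim})$, while $X=CA^{\sim}$ for some $C$ is equivalent to $\mathcal{N}(A^{\sim})\subseteq\mathcal{N}(X)$. The hypothesis ${\rm rank}(A^{\sim}AA^{\sim})={\rm rank}(A)$ ensures, by Theorem \ref{mMPexitscondition}, that $A^{\mathfrak{m}}$ exists, so the equalities $\mathcal{R}(A^{\mathfrak{m}})=\mathcal{R}(A^{\sim})$ and $\mathcal{N}(A^{\mathfrak{m}})=\mathcal{N}(A^{\sim})$ from Lemma \ref{mMPprole} are at my disposal throughout.

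For \eqref{AmMPBjeharmmer1} $\Rightarrow$ \eqref{AmMPBjeharmmer2} I would set $X=A^{\mathfrak{m}}$, so that $AXA=A$ is immediate. Because $\mathcal{R}(A^{\mathfrak{m}})=\mathcal{R}(A^{\sim})$, the equation $A^{\sim}B=A^{\mathfrak{m}}$ is solvable in $B$, and because $\mathcal{N}(A^{\mathfrak{m}})=\mathcal{N}(A^{\sim})$, the equation $CA^{\sim}=A^{\mathfrak{m}}$ is solvable in $C$; the solvability test of Lemma \ref{AXDsolle} reduces here to $A^{\sim}(A^{\sim})^{(1)}A^{\mathfrak{m}}=A^{\mathfrak{m}}$, which holds since $A^{\sim}(A^{\sim})^{(1)}$ fixes $\mathcal{R}(A^{\sim})\supseteq\mathcal{R}(A^{\mathfrak{m}})$. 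Feeding $A^{\sim}B=A^{\mathfrak{m}}$ and $CA^{\sim}=A^{\mathfrak{m}}$ into the general-solution formula of Lemma \ref{AXDsolle}, with the free factor taken to be $I_m$ and $I_n$ respectively, would output exactly the stated expressions for $B$ and $C$.

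For \eqref{AmMPBjeharmmer2} $\Rightarrow$ \eqref{AmMPBjeharmmer1} the engine is a rank count. From $X=A^{\sim}B$ I get ${\rm rank}(X)\le{\rm rank}(A^{\sim})={\rm rank}(A)$, while $AXA=A$ forces ${\rm rank}(X)\ge{\rm rank}(A)$; hence ${\rm rank}(X)={\rm rank}(A)$ and, combined with $\mathcal{R}(X)\subseteq\mathcal{R}(A^{\sim})$, also $\mathcal{R}(X)=\mathcal{R}(A^{\sim})$. Next I would promote this $\{1\}$-inverse to a $\{1,2\}$-inverse: $XA$ is idempotent since $(XA)^2=X(AXA)=XA$, and ${\rm rank}(XA)={\rm rank}(X)$ (both equal ${\rm rank}(A)$) gives $\mathcal{R}(XA)=\mathcal{R}(X)$, so the idempotent $XA$ acts as the identity on $\mathcal{R}(X)$ and therefore $XAX=X$. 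Finally $X=CA^{\sim}$ yields $\mathcal{N}(A^{\sim})\subseteq\mathcal{N}(X)$, which becomes an equality once dimensions are compared; thus $X\in A\{1,2\}$ with $\mathcal{R}(X)=\mathcal{R}(A^{\sim})$ and $\mathcal{N}(X)=\mathcal{N}(A^{\sim})$, i.e. $X=A^{(1,2)}_{\mathcal{R}(A^{\sim}),\mathcal{N}(A^{\sim})}=A^{\mathfrak{m}}$ by Remark \ref{minMPATS2rem}.

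I expect the crux to be the promotion from a $\{1\}$-inverse to a $\{1,2\}$-inverse in the reverse direction. This is precisely the pitfall flagged by Example \ref{minMPmooredefce}, where a $\{1,2\}$-inverse with the correct null space still fails to equal $A^{\mathfrak{m}}$; what rescues the argument here is that the factorization $X=A^{\sim}B$ clamps ${\rm rank}(X)$ to ${\rm rank}(A)$, forcing $\mathcal{R}(X)=\mathcal{R}(A^{\sim})$ as well. A secondary point to watch is keeping the two solvability equivalences on the correct side, so that $B$ is tied to the range condition and $C$ to the null-space condition.
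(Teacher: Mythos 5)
Your proposal is correct and follows essentially the same route as the paper: the paper's (one-line) proof says the theorem "is easily obtained in terms of Remark \ref{minMPATS2rem} and Lemma \ref{AXDsolle}", and your argument is precisely a detailed execution of that plan — using Lemma \ref{AXDsolle} to solve $A^{\sim}B=A^{\mathfrak{m}}$ and $CA^{\sim}=A^{\mathfrak{m}}$ for the forward direction and the "Moreover" formulas, and the characterization $A^{\mathfrak{m}}=A^{(1,2)}_{\mathcal{R}(A^{\sim}),\mathcal{N}(A^{\sim})}$ for the converse. Your rank-counting promotion of $X$ from a $\{1\}$-inverse to a $\{1,2\}$-inverse with the prescribed range and null space is exactly the detail the paper leaves implicit, and it is sound.
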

\par
\begin{proof}
It is easily obtained in  terms of Remark \ref{minMPATS2rem} and  Lemma \ref{AXDsolle}.
\end{proof}

\begin{corollary}
Let $A\in\mathbb{C}^{m\times n}$ with ${\rm rank}({A^{\sim}}AA^{\sim})={\rm rank}(A)$, and let $X\in\mathbb{C}^{n\times m}$. Then the following statements are equivalent:
\begin{enumerate}[$(1)$]
  \item $X=A^{\mathfrak{m}}$;
  \item There exists $D\in\mathbb{C}^{m\times n}$ such that  $AXA=A,X=A^{\sim}DA^{\sim}$.
\end{enumerate}
In this case,
\begin{equation*}
  D=(A^{\sim})^{(1)}A^{\mathfrak{m}}(A^{\sim})^{(1)}+(I_m-(A^{\sim})^{(1)}A^{\sim})Y+Z(I_n-A^{\sim}(A^{\sim})^{(1)}),
\end{equation*}
where $Y,Z\in\mathbb{C}^{m\times n}$ are arbitrary, and $A^{(1)}\in A\{1\}$.
\end{corollary}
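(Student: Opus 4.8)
The plan is to deduce this corollary directly from Theorem \ref{AmMPBjeharmmerth}, by reading the single factorization $X=A^{\sim}DA^{\sim}$ as a repackaging of the pair of one-sided factorizations $X=A^{\sim}B$ and $X=CA^{\sim}$ that appear in its item (2), and then invoking the solvability criterion of Lemma \ref{AXDsolle} to recover the explicit expression for $D$.

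For the implication (2) $\Rightarrow$ (1), I would simply observe that $X=A^{\sim}DA^{\sim}$ can be split in two ways: putting $B=DA^{\sim}\in\mathbb{C}^{m\times m}$ gives $X=A^{\sim}B$, and putting $C=A^{\sim}D\in\mathbb{C}^{n\times n}$ gives $X=CA^{\sim}$. Together with the hypothesis $AXA=A$, these are exactly the three conditions in item (2) of Theorem \ref{AmMPBjeharmmerth}, whence $X=A^{\mathfrak{m}}$.

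For (1) $\Rightarrow$ (2), assume $X=A^{\mathfrak{m}}$. Theorem \ref{AmMPBjeharmmerth} already supplies $AXA=A$ and matrices with $X=A^{\sim}B=CA^{\sim}$ (equivalently, $\mathcal{R}(X)\subseteq\mathcal{R}(A^{\sim})$ and $\mathcal{N}(A^{\sim})\subseteq\mathcal{N}(X)$, which follow from Lemma \ref{mMPprole}). It then remains to produce a single $D$ with $A^{\sim}DA^{\sim}=X$. I would apply Lemma \ref{AXDsolle} to this matrix equation, with coefficient matrix $A^{\sim}$ on both sides and right-hand side $X$. The consistency condition $A^{\sim}(A^{\sim})^{(1)}X(A^{\sim})^{(1)}A^{\sim}=X$ is checked using the two one-sided factorizations: from $X=A^{\sim}B$ one gets $A^{\sim}(A^{\sim})^{(1)}X=X$, and from $X=CA^{\sim}$ one gets $X(A^{\sim})^{(1)}A^{\sim}=X$, so the condition collapses to $X=X$. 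Hence a solution $D$ exists, and the general solution furnished by Lemma \ref{AXDsolle}, after substituting $X=A^{\mathfrak{m}}$ into the term $(A^{\sim})^{(1)}X(A^{\sim})^{(1)}$, is precisely the displayed formula for $D$.

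The computations are entirely routine, so there is no genuine obstacle beyond bookkeeping; the only point worth stating with care is that the dimensions line up, namely $D\in\mathbb{C}^{m\times n}$ with the two idempotent factors $I_m-(A^{\sim})^{(1)}A^{\sim}$ and $I_n-A^{\sim}(A^{\sim})^{(1)}$ of the correct orders $m$ and $n$, so that the general-solution formula inherited from Lemma \ref{AXDsolle} is literally the one asserted in the statement.
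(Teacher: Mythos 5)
Your proposal is correct and follows essentially the same route as the paper, which simply declares the result ``a direct corollary of Theorem \ref{AmMPBjeharmmerth}'': you obtain $(2)\Rightarrow(1)$ by splitting $X=A^{\sim}DA^{\sim}$ into the two one-sided factorizations $B=DA^{\sim}$, $C=A^{\sim}D$ required by that theorem, and $(1)\Rightarrow(2)$ by solving $A^{\sim}DA^{\sim}=X$ via Lemma \ref{AXDsolle}, which is precisely the mechanism (Remark \ref{minMPATS2rem} plus Lemma \ref{AXDsolle}) behind the theorem itself. Your consistency check $A^{\sim}(A^{\sim})^{(1)}X(A^{\sim})^{(1)}A^{\sim}=X$ and the resulting general-solution formula for $D$ match the paper's displayed expression exactly, so the argument is complete.
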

\par
\begin{proof}
It is a direct corollary of Theorem \ref{AmMPBjeharmmerth}.
\end{proof}

\section{Further characterizations of  the Minkowski inverse} \label{characterminMPfursec}
As it has been stated in Section \ref{introductionsection}, a great deal of mathematical effort \cite{ringbookre,Centralizerref,syl} has been devoted to the study of the Moore-Penrose inverse in a ring with   involution. It is  observed that $\mathbb{C}^{m\times n}$  is not a ring or even a semigroup for matrix multiplication (unless $m=n$).
However, notice two interesting facts. One is that an involution \cite{Centralizerref} $a \mapsto a^*$ in a ring $R$ is a map from $R$ to $R$ such that $(a^*)^*=a$, $(a+b)^*=a^*+b^*$, and $(ab)^*=b^*a^*$ for all $a,b\in R$, and  the other one is that the Minkowski adjoint $A^{\sim}$ has  similar properties, that is, $(A^{\sim})^{\sim}=A$, $(A+C)^{\sim}=A^{\sim}+C^{\sim}$, and $(AB)^{\sim}=B^{\sim}A^{\sim}$,  where $A,C\in\mathbb{C}^{m\times n}$, and $B\in\mathbb{C}^{n\times l}$.
Based on the above considerations, the purpose of this section is to extend some characterizations of the Moore-Penrose inverse in rings, mainly mentioned in \cite{Centralizerref,syl}, to  the Minkowski inverse.  Inspired by \cite[Theorem 3.12, Corollary 3.17]{Centralizerref}, we give the following two results in the first part of this section.

\begin{theorem}\label{XYchaAmMPth}
Let $A\in\mathbb{C}^{m\times n}$. Then the following statements are equivalent:
\begin{enumerate}[$(1)$]
  \item\label{XYchaAmMPit1} $A^{\mathfrak{m}}$ exists;
  \item\label{XYchaAmMPit2} There exists $X\in \mathbb{C}^{m\times m}$ such that $A=XAA^{\sim}A$;
  \item\label{XYchaAmMPit3} There exists $Y\in \mathbb{C}^{n\times n}$ such that $A=AA^{\sim}AY$.
\end{enumerate}
      In this case, $A^{\mathfrak{m}}=(XA)^{\sim}=(AY)^{\sim}$.
\end{theorem}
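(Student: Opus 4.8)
The plan is to prove $(1)\Leftrightarrow(2)$ in full and then obtain $(1)\Leftrightarrow(3)$ from it by Minkowski-adjoint symmetry. The symmetry works because taking $\sim$ of the four defining equations of $A^{\mathfrak{m}}$ shows $(A^{\sim})^{\mathfrak{m}}=(A^{\mathfrak{m}})^{\sim}$ (and in particular $A^{\mathfrak{m}}$ exists iff $(A^{\sim})^{\mathfrak{m}}$ does), so applying the already-proved equivalence $(1)\Leftrightarrow(2)$ to $A^{\sim}$ and taking $\sim$ of the resulting identity $A^{\sim}=X'A^{\sim}AA^{\sim}$ turns it into $A=AA^{\sim}A(X')^{\sim}$, i.e.\ condition $(3)$ with $Y=(X')^{\sim}$. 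Throughout I would rely on two facts from the excerpt: Lemma \ref{minMPexistconlem}, which says existence is equivalent to ${\rm rank}(AA^{\sim})={\rm rank}(A^{\sim}A)={\rm rank}(A)$, and the representation $A^{\mathfrak{m}}=A^{\sim}(AA^{\sim})^{\#}$ together with ${\rm Ind}(AA^{\sim})=1$ recorded in \eqref{indAAeq}.

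For $(2)\Rightarrow(1)$ I would argue purely by ranks. From $A=XAA^{\sim}A$ one gets ${\rm rank}(A)\le{\rm rank}(AA^{\sim}A)\le{\rm rank}(A^{\sim}A)\le{\rm rank}(A)$, forcing ${\rm rank}(A^{\sim}A)={\rm rank}(A)$. Taking the Minkowski adjoint gives $A^{\sim}=A^{\sim}AA^{\sim}X^{\sim}$, and the same chain yields ${\rm rank}(A)\le{\rm rank}(A^{\sim}AA^{\sim})\le{\rm rank}(AA^{\sim})\le{\rm rank}(A)$, hence ${\rm rank}(AA^{\sim})={\rm rank}(A)$. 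Both conditions of Lemma \ref{minMPexistconlem} then hold, so $A^{\mathfrak{m}}$ exists.

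For $(1)\Rightarrow(2)$ I would exhibit the explicit choice $X=(AA^{\sim})^{\#}$, which is legitimate since \eqref{indAAeq} guarantees ${\rm Ind}(AA^{\sim})\le1$. Using the group-inverse identity $(AA^{\sim})^{\#}(AA^{\sim})=P_{\mathcal{R}(AA^{\sim}),\mathcal{N}(AA^{\sim})}$ and noting that ${\rm rank}(AA^{\sim})={\rm rank}(A)$ forces $\mathcal{R}(AA^{\sim})=\mathcal{R}(A)$, this projector fixes $\mathcal{R}(A)$ pointwise, so $XAA^{\sim}A=(AA^{\sim})^{\#}(AA^{\sim})A=A$. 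For the concluding formula I would show $(XA)^{\sim}=A^{\mathfrak{m}}$ for \emph{every} admissible $X$: taking $\sim$ of $A=XAA^{\sim}A$ gives $A^{\sim}A(XA)^{\sim}=A^{\sim}$, so by Theorem \ref{m13ineqconditionth} the matrix $W:=(XA)^{\sim}$ is a $\{1,3^{\mathfrak{m}}\}$-inverse of $A$ with $AW=P_{\mathcal{R}(A),\mathcal{N}(A^{\sim})}$. Since $\mathcal{R}(W)\subseteq\mathcal{R}(A^{\sim})$ while $W\in A\{1\}$ forces ${\rm rank}(W)={\rm rank}(A)$, I conclude that $W$ is reflexive with $\mathcal{R}(W)=\mathcal{R}(A^{\sim})$ and $\mathcal{N}(W)=\mathcal{N}(A^{\sim})$, whence $W=A^{(1,2)}_{\mathcal{R}(A^{\sim}),\mathcal{N}(A^{\sim})}=A^{\mathfrak{m}}$ by Remark \ref{minMPATS2rem}. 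The identity $A^{\mathfrak{m}}=(AY)^{\sim}$ drops out of the same argument applied to $A^{\sim}$.

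The step I expect to be the real obstacle is the last one: establishing $(XA)^{\sim}=A^{\mathfrak{m}}$ for an \emph{arbitrary} $X$ satisfying $(2)$, not merely for the distinguished choice $(AA^{\sim})^{\#}$. This requires the careful range, null-space, and rank bookkeeping above to upgrade the easy conclusion ``$W$ is a $\{1,3^{\mathfrak{m}}\}$-inverse'' to ``$W$ is the reflexive inverse with the prescribed range and kernel.'' The two implications $(2)\Rightarrow(1)$ and $(1)\Rightarrow(2)$ themselves are short, being a rank inequality chain and a one-line verification, respectively.
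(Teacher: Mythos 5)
Your proposal is correct, but it reaches the conclusion by a noticeably different route than the paper, so a comparison is worth recording. For the equivalence $(1)\Leftrightarrow(2)$, the paper argues non-constructively: solvability of $A=X(AA^{\sim}A)$ is equivalent to $\mathcal{N}(AA^{\sim}A)\subseteq\mathcal{N}(A)$, i.e.\ to ${\rm rank}(AA^{\sim}A)={\rm rank}(A)$, which is then matched against item \eqref{minMPexistATSitem3} of Theorem \ref{mMPexitscondition}; you instead prove $(2)\Rightarrow(1)$ by the same rank chain but handle $(1)\Rightarrow(2)$ constructively with the explicit witness $X=(AA^{\sim})^{\#}$, using \eqref{indAAeq}. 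The bigger divergence is in the formula $A^{\mathfrak{m}}=(XA)^{\sim}$ for an \emph{arbitrary} admissible $X$: the paper verifies by direct substitution (repeatedly inserting $A=XAA^{\sim}A$) that $(XA)^{\sim}\in A\{1,3^{\mathfrak{m}},4^{\mathfrak{m}}\}$ --- including a fairly long chain for the $4^{\mathfrak{m}}$ property ending in $A^{\sim}X^{2}(AA^{\sim})^{3}(X^{\sim})^{2}A$ --- and then invokes the representation \eqref{minMPA13mA14meq}, $A^{\mathfrak{m}}=A^{(1,4^{\mathfrak{m}})}AA^{(1,3^{\mathfrak{m}})}$, with both factors equal to $(XA)^{\sim}$; you only verify the $\{1,3^{\mathfrak{m}}\}$ properties (via Theorem \ref{m13ineqconditionth}) and then finish by subspace bookkeeping, identifying $W=(XA)^{\sim}$ as $A^{(1,2)}_{\mathcal{R}(A^{\sim}),\mathcal{N}(A^{\sim})}$ through Remark \ref{minMPATS2rem}. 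Finally, the paper treats condition $(3)$ by repeating the whole argument in parallel, whereas you derive it from $(2)$ by the duality $(A^{\sim})^{\mathfrak{m}}=(A^{\mathfrak{m}})^{\sim}$. Your route avoids the paper's heaviest computation and the duplicated argument, at the cost of leaning on the $A^{(2)}_{\mathcal{T},\mathcal{S}}$ machinery and the standard fact that a $\{1\}$-inverse of rank equal to ${\rm rank}(A)$ is automatically reflexive (which you should spell out, since the paper never states it explicitly); the paper's proof is purely equational and self-contained modulo \eqref{minMPA13mA14meq}. Both are valid proofs of the theorem.
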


\begin{proof}
It is easy to see that there exists $X\in \mathbb{C}^{m\times m}$ such that $A=XAA^{\sim}A$ if and only if $\mathcal{N}(AA^{\sim}A)\subseteq \mathcal{N}(A)$ if and only if $ {\rm rank}(A)={\rm rank}(AA^{\sim}A)$.
Then the equivalence of \eqref{XYchaAmMPit1}  and  \eqref{XYchaAmMPit2} is obvious by the item  \eqref{minMPexistATSitem3} in Theorem  \ref{mMPexitscondition}. And, the proof of the   equivalence of \eqref{XYchaAmMPit1}  and  \eqref{XYchaAmMPit3} can be completed by the method analogous to that used above. \par
Moreover, if $A^{\mathfrak{m}}$ exists,  we first claim that $(XA)^{\sim}\in A\{1,3^{\mathfrak{m}},4^{\mathfrak{m}}\}$. In fact, using $A=XAA^{\sim}A$ we infer that
\begin{align*}
(A(XA)^{\sim})^{\sim}&=XAA^{\sim}=XA(XAA^{\sim}A)^{\sim}
=XAA^{\sim}AA^{\sim}X^{\sim}=A(XA)^{\sim},\\
A(XA)^{\sim}A&=(A(XA)^{\sim})^{\sim}A=XAA^{\sim}A=A,\\
((XA)^{\sim}A)^{\sim}&=(A^{\sim}X^{\sim}A)^{\sim}=((XAA^{\sim}A)^{\sim}X^{\sim}A)^{\sim}=(A^{\sim}AA^{\sim}(X^{\sim})^{2}A
)^{\sim}\\&=(A^{\sim}XAA^{\sim}AA^{\sim}(X^{\sim})^2A)^{\sim}
=(A^{\sim}XXAA^{\sim}AA^{\sim}AA^{\sim}(X^{\sim})^2A)^{\sim}\\
&=(A^{\sim}(X)^2(AA^{\sim})^3(X^{\sim})^2A)^{\sim}
= A^{\sim}(X)^2(AA^{\sim})^3(X^{\sim})^2A=(XA)^{\sim}A,
\end{align*}
which imply that $(XA)^{\sim}\in A\{1,3^{\mathfrak{m}},4^{\mathfrak{m}}\}$. Finally, according to \eqref{minMPA13mA14meq},  we obtain that
\begin{align*}
A^{\mathfrak{m}}&=(XA)^{\sim}A(XA)^{\sim}=((XA)^{\sim}A)^{\sim}(XA)^{\sim}=A^{\sim}XAA^{\sim}X^{\sim}\\
&=(A(XA)^{\sim}A)^{\sim}X^{\sim}=(XA)^{\sim}.
\end{align*}
Using the same way as in the above proof, we can  carry out the proof of    $(AY)^{\sim}\in A\{1,3^{\mathfrak{m}},4^{\mathfrak{m}}\}$  and $A^{\mathfrak{m}}=(AY)^{\sim}$.
\end{proof}
\par
A well-known result is given directly in the following lemma, which will be useful in the proof of the next theorem.

\begin{lemma}\label{mattacobson}
Let $A\in\mathbb{C}^{m\times n}$ and $B\in\mathbb{C}^{n\times m}$. Then $I_m-AB$ is nonsingular if and only if $I_n-BA$ is nonsingular, in which case, $(I_m-AB)^{-1}=I_m+A(I_n-BA)^{-1}B$.
\end{lemma}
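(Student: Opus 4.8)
Lemma \ref{mattacobson} asserts that for $A\in\mathbb{C}^{m\times n}$ and $B\in\mathbb{C}^{n\times m}$, the matrix $I_m-AB$ is nonsingular if and only if $I_n-BA$ is nonsingular, and in that case $(I_m-AB)^{-1}=I_m+A(I_n-BA)^{-1}B$. This is the classical Jacobson-lemma type identity (the Weinstein--Aronszajn / Sylvester determinant relation at the level of invertibility), so the cleanest route is to exhibit the inverse directly and then invoke symmetry.

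**Plan.** The plan is to prove one implication by producing the candidate inverse explicitly and verifying it, and then obtain the converse for free by swapping the roles of $A$ and $B$. First I would assume $I_n-BA$ is nonsingular and set $M=I_m+A(I_n-BA)^{-1}B$. I would then compute $(I_m-AB)M$ directly. Expanding gives
\begin{equation*}
(I_m-AB)M=I_m+A(I_n-BA)^{-1}B-AB-ABA(I_n-BA)^{-1}B.
\end{equation*}
The key algebraic manipulation is to factor the last two $A\cdots B$ terms on the left by $A$ and on the right by $B$, writing the middle factor as
\begin{equation*}
(I_n-BA)^{-1}-I_n-BA(I_n-BA)^{-1}=(I_n-BA)^{-1}-\big(I_n+BA(I_n-BA)^{-1}\big).
\end{equation*}
Since $I_n+BA(I_n-BA)^{-1}=(I_n-BA)(I_n-BA)^{-1}+BA(I_n-BA)^{-1}=(I_n-BA)^{-1}$, this middle factor collapses to $0$, so $(I_m-AB)M=I_m$. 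The symmetric computation $M(I_m-AB)=I_m$ runs identically, establishing that $M$ is a genuine two-sided inverse of $I_m-AB$; hence $I_m-AB$ is nonsingular and the stated formula holds.

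**The converse and the honest obstacle.** For the reverse direction I would exploit the fact that the hypothesis and conclusion are symmetric under interchanging $(A,B)\mapsto(B,A)$, which swaps $I_m-AB$ and $I_n-BA$. Thus assuming $I_m-AB$ nonsingular, the same argument with the roles reversed shows $I_n-BA$ is nonsingular with inverse $I_n+B(I_m-AB)^{-1}A$, giving the full equivalence. There is no real obstacle here beyond bookkeeping; the only point requiring a moment's care is the collapsing identity $I_n+BA(I_n-BA)^{-1}=(I_n-BA)^{-1}$, which must be applied with the factors in the correct order since matrices need not commute. For that reason I would keep $(I_n-BA)^{-1}$ consistently on the right throughout the expansion and avoid any step that silently assumes $AB$ commutes with $A$ or $B$.
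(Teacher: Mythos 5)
Your proof is correct: the identity $I_n+BA(I_n-BA)^{-1}=(I_n-BA)^{-1}$ is applied with the correct factor order, both products $(I_m-AB)M$ and $M(I_m-AB)$ collapse to $I_m$, and the converse follows by the $(A,B)\mapsto(B,A)$ symmetry. The paper itself offers no proof of this lemma (it is stated as a well-known result), and your direct verification is exactly the standard argument one would supply in its place.
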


\begin{theorem}\label{inverchamMPth}
Let $A\in\mathbb{C}^{m\times n}$ and  $A^{(1)}\in A\{1\}$. Then the following statements are equivalent:
\begin{enumerate}[$(1)$]
  \item\label{inverchamMPit1} $A^{\mathfrak{m}}$ exists;
    \item\label{inverchamMPit3} $A^{\sim}A+I_{n}-A^{(1)}A$ is nonsingular;
  \item\label{inverchamMPit2} $AA^{\sim}+I_{m}-AA^{(1)}$ is nonsingular.
\end{enumerate}
In this case,
\begin{align*}
 A^{\mathfrak{m}}&=(A(A^{\sim}A+I_{n}-A^{(1)}A)^{-1})^{\sim}\\
 &=((AA^{\sim}+I_{m}-AA^{(1)})^{-1}A)^{\sim}.
\end{align*}
\end{theorem}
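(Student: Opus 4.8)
The plan is to establish the cycle (1) $\Rightarrow$ (2), (2) $\Rightarrow$ (1) (this second implication also delivering the first displayed formula), and then to get (2) $\Leftrightarrow$ (3) essentially for free from Jacobson's Lemma \ref{mattacobson}. The two technical engines are the elementary identity
\[
A\bigl(A^{\sim}A+I_{n}-A^{(1)}A\bigr)=AA^{\sim}A=\bigl(AA^{\sim}+I_{m}-AA^{(1)}\bigr)A,
\]
both equalities being immediate from $AA^{(1)}A=A$, together with Theorem \ref{XYchaAmMPth}.

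For (2) $\Leftrightarrow$ (3) I would set $C=A^{(1)}-A^{\sim}\in\mathbb{C}^{n\times m}$ and observe that $A^{\sim}A+I_{n}-A^{(1)}A=I_{n}-CA$ while $AA^{\sim}+I_{m}-AA^{(1)}=I_{m}-AC$. Lemma \ref{mattacobson} then says $I_{n}-CA$ is nonsingular if and only if $I_{m}-AC$ is, which is exactly (2) $\Leftrightarrow$ (3). For (2) $\Rightarrow$ (1) together with the formulas: if $M:=A^{\sim}A+I_{n}-A^{(1)}A$ is nonsingular, the identity above gives $A=AA^{\sim}A\,M^{-1}$, so Theorem \ref{XYchaAmMPth} (with $Y=M^{-1}$) shows $A^{\mathfrak{m}}$ exists and $A^{\mathfrak{m}}=(AM^{-1})^{\sim}$, the first formula; dually, if $N:=AA^{\sim}+I_{m}-AA^{(1)}$ is nonsingular then $A=N^{-1}AA^{\sim}A$, and Theorem \ref{XYchaAmMPth} (with $X=N^{-1}$) gives $A^{\mathfrak{m}}=(N^{-1}A)^{\sim}$, the second formula.

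The remaining and main step is (1) $\Rightarrow$ (2), where I would prove that $M$ is injective. Suppose $Mx=0$, i.e. $A^{\sim}Ax=-(I_{n}-A^{(1)}A)x$. The right-hand side lies in $\mathcal{R}(I_{n}-A^{(1)}A)=\mathcal{N}(A)$ (since $A(I_{n}-A^{(1)}A)=0$ and the ranks match), while the left-hand side lies in $\mathcal{R}(A^{\sim}A)$. The crux, and the genuinely Minkowski-specific obstacle, is that — unlike for the Euclidean adjoint — the identities $\mathcal{N}(A^{\sim}A)=\mathcal{N}(A)$ and $\mathcal{R}(A^{\sim}A)\cap\mathcal{N}(A)=\{0\}$ are not automatic and must be extracted from the existence hypothesis. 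Granting existence, Lemma \ref{minMPexistconlem} gives ${\rm rank}(A^{\sim}A)={\rm rank}(AA^{\sim})={\rm rank}(A)$, whence $\mathcal{R}(A^{\sim}A)=\mathcal{R}(A^{\sim})$, $\mathcal{N}(A^{\sim}A)=\mathcal{N}(A)$, and $\mathcal{R}(A^{\sim})\cap\mathcal{N}(A)=\{0\}$ by rank bookkeeping.

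With these in hand the injectivity is formal: $A^{\sim}Ax=-(I_{n}-A^{(1)}A)x\in\mathcal{R}(A^{\sim}A)\cap\mathcal{N}(A)=\{0\}$ forces $A^{\sim}Ax=0$ and $(I_{n}-A^{(1)}A)x=0$; the first yields $x\in\mathcal{N}(A^{\sim}A)=\mathcal{N}(A)$, hence $A^{(1)}Ax=0$, and adding the two relations gives $x=A^{(1)}Ax+(I_{n}-A^{(1)}A)x=0$. Thus $M$ is nonsingular, completing (1) $\Rightarrow$ (2). I expect this injectivity argument — precisely the recovery of the range and kernel identities from the existence of $A^{\mathfrak{m}}$ — to be the delicate point, since in $\mathbb{C}^{m\times n}$ with the Minkowski adjoint these identities genuinely fail without that hypothesis; everything else reduces to the single identity $AA^{(1)}A=A$ and the already-proved Theorem \ref{XYchaAmMPth} and Lemma \ref{mattacobson}.
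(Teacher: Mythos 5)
Your proposal is correct, and it genuinely diverges from the paper's proof at the one implication you yourself identified as the crux, namely $(1)\Rightarrow(2)$. The paper proves it equationally: from Theorem \ref{XYchaAmMPth} it writes $A=XAA^{\sim}A$, then verifies by direct multiplication that $A^{(1)}XA+I_n-A^{(1)}A$ is an explicit inverse of $D:=A^{(1)}AA^{\sim}A+I_n-A^{(1)}A$, rewrites $D=I_n-A^{(1)}A(I_n-A^{\sim}A)$, and transfers nonsingularity from $D$ to $B=A^{\sim}A+I_n-A^{(1)}A$ via Jacobson's Lemma \ref{mattacobson} (using $A^{\sim}AA^{(1)}A=A^{\sim}A$). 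You instead prove injectivity of $M$ directly, by extracting $\mathcal{N}(A^{\sim}A)=\mathcal{N}(A)$, $\mathcal{R}(A^{\sim}A)=\mathcal{R}(A^{\sim})$ and $\mathcal{R}(A^{\sim})\cap\mathcal{N}(A)=\{0\}$ from Lemma \ref{minMPexistconlem} and then splitting $x$ along $A^{(1)}A$; this is more elementary and arguably more transparent, but it leans on finite-dimensional facts (rank bookkeeping, injective square matrix $\Rightarrow$ invertible), so unlike the paper's purely equational argument it would not survive in the ring-with-involution setting that motivates this section. Your other improvement is in the formulas: you harvest $A^{\mathfrak{m}}=(AM^{-1})^{\sim}$ and $A^{\mathfrak{m}}=(N^{-1}A)^{\sim}$ for free from the formula clause of Theorem \ref{XYchaAmMPth} during the $(2)\Rightarrow(1)$ and $(3)\Rightarrow(1)$ steps, whereas the paper runs a separate computation showing $B^{\sim}A^{\mathfrak{m}}=A^{\sim}$ via Lemma \ref{mMPprole}. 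The remaining pieces coincide with the paper: the identity $AM=AA^{\sim}A=NA$ plus Theorem \ref{XYchaAmMPth} for $(2)\Rightarrow(1)$, and the rewriting $M=I_n-(A^{(1)}-A^{\sim})A$, $N=I_m-A(A^{(1)}-A^{\sim})$ with Jacobson's lemma for $(2)\Leftrightarrow(3)$.
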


\begin{proof}
Denote $B=A^{\sim}A+I_{n}-A^{(1)}A$ and $C=AA^{\sim}+I_{m}-AA^{(1)}$. \par
\eqref{inverchamMPit1} $\Rightarrow$ \eqref{inverchamMPit3}. If $A^{\mathfrak{m}}$ exists,  using   items \eqref{XYchaAmMPit1} and \eqref{XYchaAmMPit2} in Theorem \ref{XYchaAmMPth},  we have $A=XAA^{\sim}A$ for some $X\in\mathbb{C}^{m\times m}$. It can be easily verified that
\begin{equation*}
 (A^{(1)}XA+I_{n}-A^{(1)}A)(A^{(1)}AA^{\sim}A+I_n-A^{(1)}A)=I_{n},
\end{equation*}
which  shows the nonsingularity of $D:=A^{(1)}AA^{\sim}A+I_n-A^{(1)}A$. And, $D$ can be rewritten as $D=I_n-A^{(1)}A(I_n-A^{\sim}A)$. Thus, by  Lemma \ref{mattacobson}, it is easy to see that $B$ is nonsingular. \par
\eqref{inverchamMPit3} $\Rightarrow$ \eqref{inverchamMPit1}.
Since $B$ is nonsingular, from $AB=AA^{\sim}A$ we have that $A=AA^{\sim}AB^{-1}$.
Therefore, $A^{\mathfrak{m}}$ exists by  items \eqref{XYchaAmMPit1} and \eqref{XYchaAmMPit3} in  Theorem \ref{XYchaAmMPth}.\par
\eqref{inverchamMPit2} $\Leftrightarrow$ \eqref{inverchamMPit3}. Since $B$  and $C$ can be rewritten as  $B=I_n-(A^{(1)}-A^{\sim})A$ and $C=I_{m}-A(A^{(1)}-A^{\sim})$, from Lemma \ref{mattacobson} we  have the equivalence of \eqref{inverchamMPit2} and \eqref{inverchamMPit3} immediately.\par
In this case, from items \eqref{mMPRN} and \eqref{AAmMPeq} in Lemma \ref{mMPprole},  we infer that
\begin{align*}
B^{\sim}A^{\mathfrak{m}}&=(A^{\sim}A+I_{n}-A^{(1)}A)^{\sim}A^{\mathfrak{m}}\\
&=A^{\sim}AA^{\mathfrak{m}}+A^{\mathfrak{m}}-A^{\sim}(A^{\sim})^{(1)}A^{\mathfrak{m}}\\
&=A^{\sim},
\end{align*}
which, together with the item \eqref{inverchamMPit3}, gives $A^{\mathfrak{m}}=(AB^{-1})^{\sim}$. Analogously, we can derive  $A^{\mathfrak{m}}=(C^{-1}A)^{\sim}$. This completes the proof.
\end{proof}
\par
The Sylvester matrix equation  \cite{sylorignre} has numerous applications in neural networks, robust control, graph theory, and other areas of system and control theory.
Motivated by \cite[Theorem 2.3]{syl}, in the following theorem, we use the solvability of a certain Sylverster matrix equation to characterize the existence of  the Minkowski inverse, and apply its solutions to represent the Minkowski inverse.

\begin{theorem}\label{SylvestermMPth}
Let $A\in\mathbb{C}^{m\times n}$. Then the following statements are equivalent:
\begin{enumerate}[$(1)$]
  \item\label{SylvestermMPitem1} $A^{\mathfrak{m}}$ exists;
  \item\label{SylvestermMPitem2} ${\rm rank}(AA^{\sim})={\rm rank}(A^{\sim})$ and there exist $X\in\mathbb{C}^{m\times m}$ and a projector $Y\in\mathbb{C}^{m\times m}$ such that
      \begin{equation}\label{XAAYAEq}
      XAA^{\sim}-YX=I_m,
      \end{equation}
 $AA^{\sim}X=XAA^{\sim}$ and $AA^{\sim}Y=0$.
\end{enumerate}
In this case,
\begin{equation}\label{MinMPAeAwXeq}
A^{\mathfrak{m}}=A^{\sim}X.
\end{equation}
\end{theorem}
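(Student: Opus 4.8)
The plan is to set $M=AA^{\sim}$ (note $M^{\sim}=M$ and, since $A^{\sim}=GA^{*}F$ with $G,F$ invertible, ${\rm rank}(A^{\sim})={\rm rank}(A)$), to prove \eqref{SylvestermMPitem1}$\Rightarrow$\eqref{SylvestermMPitem2} by exhibiting an explicit pair $(X,Y)$, and to prove \eqref{SylvestermMPitem2}$\Rightarrow$\eqref{SylvestermMPitem1} together with the formula \eqref{MinMPAeAwXeq} by a single algebraic manipulation of \eqref{XAAYAEq}. Throughout I would lean on the group-inverse identities for $M^{\#}$ and on Theorem~\ref{IndchaAmMPth}.

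For \eqref{SylvestermMPitem1}$\Rightarrow$\eqref{SylvestermMPitem2}: assuming $A^{\mathfrak{m}}$ exists, Lemma~\ref{minMPexistconlem} gives ${\rm rank}(M)={\rm rank}(A)={\rm rank}(A^{\sim})$, so the rank hypothesis holds; moreover \eqref{indAAeq} yields ${\rm Ind}(M)=1$, whence $M^{\#}$ exists, and the rank equality forces $\mathcal{N}(A^{\sim})=\mathcal{N}(M)$. Writing $P=MM^{\#}=P_{\mathcal{R}(M),\mathcal{N}(M)}$, I would take
\[
Y=I_{m}-P,\qquad X=M^{\#}+P-I_{m}.
\]
Using $MP=PM=M$, $M^{\#}P=PM^{\#}=M^{\#}$ and $P^{2}=P$, one checks routinely that $Y^{2}=Y$, that $MY=M-MP=0$, that $MX=XM=P$ (so $M$ and $X$ commute), and finally that $XM-YX=P-(P-I_{m})=I_{m}$, which is exactly \eqref{XAAYAEq}.

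For \eqref{SylvestermMPitem2}$\Rightarrow$\eqref{SylvestermMPitem1} the decisive step is to left-multiply \eqref{XAAYAEq} by $M$: since $MX=XM$ and $MY=0$, the product collapses to $XM^{2}=M$. This immediately gives ${\rm rank}(M^{2})={\rm rank}(M)$, i.e. ${\rm Ind}(AA^{\sim})=1$. On the other hand the rank hypothesis ${\rm rank}(M)={\rm rank}(A^{\sim})={\rm rank}(A)$ combined with the trivial inclusion $\mathcal{R}(M)\subseteq\mathcal{R}(A)$ gives $\mathcal{R}(A)=\mathcal{R}(M)$, hence $\mathcal{R}(A)\subseteq\mathcal{R}(AA^{\sim})$; Theorem~\ref{IndchaAmMPth}\,\eqref{IndchaAmMPit3} then shows $A^{\mathfrak{m}}$ exists. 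For the formula \eqref{MinMPAeAwXeq}, right-multiplying $XM^{2}=M$ by $M^{\#}$ and using $M^{2}M^{\#}=M$ yields $XM=MM^{\#}$, so $M(X-M^{\#})=MX-MM^{\#}=0$; thus every column of $X-M^{\#}$ lies in $\mathcal{N}(M)=\mathcal{N}(A^{\sim})$, whence $A^{\sim}(X-M^{\#})=0$ and $A^{\sim}X=A^{\sim}M^{\#}=A^{\mathfrak{m}}$, the last equality being the representation $A^{\mathfrak{m}}=A^{\sim}(AA^{\sim})^{\#}$ recorded after Theorem~\ref{mMPexitscondition}.

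The main obstacle is not the verification but the discovery of the two algebraic tricks. In the forward direction the naive guess $X=M^{\#}$ fails \eqref{XAAYAEq}, and one must see that the correction term $P-I_{m}$ (equivalently $X=M^{\#}-Y$) is forced; a block decomposition along $\mathcal{R}(M)\oplus\mathcal{N}(M)$ makes this transparent and in fact shows $(X,Y)$ is essentially unique. In the converse the key is the left-multiplication by $M$ that converts the Sylvester relation into the index-one identity $XM^{2}=M$, which simultaneously delivers ${\rm Ind}(AA^{\sim})=1$ and, after one further multiplication by $M^{\#}$, the representation $A^{\mathfrak{m}}=A^{\sim}X$. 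The role of the separate rank hypothesis ${\rm rank}(AA^{\sim})={\rm rank}(A^{\sim})$ is precisely to supply the range and null-space inclusions needed to invoke Theorem~\ref{IndchaAmMPth} and to identify $\mathcal{N}(M)$ with $\mathcal{N}(A^{\sim})$.
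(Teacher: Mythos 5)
Your proof is correct, and it is worth recording how it differs from the paper's. Write $M=AA^{\sim}$. In the forward direction the paper never mentions the group inverse: it takes $Y=I_m-AA^{\mathfrak{m}}$, proves $Q=M+Y$ nonsingular by exhibiting an explicit inverse, and sets $X=AA^{\mathfrak{m}}Q^{-1}-Y$. In fact your pair coincides with the paper's: since $AA^{\mathfrak{m}}=P_{\mathcal{R}(M),\mathcal{N}(M)}=MM^{\#}$ and $Q^{-1}=M^{\#}+Y$, one gets $AA^{\mathfrak{m}}Q^{-1}=M^{\#}$, so the paper's $X$ is also $M^{\#}-Y$; you simply build it from $(AA^{\sim})^{\#}$ (legitimately, via \eqref{indAAeq}) rather than from $Q^{-1}$. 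In the converse, both proofs share the decisive step of multiplying \eqref{XAAYAEq} by $M$ to kill $Y$ and reach $XM^{2}=M$; after that the routes diverge. The paper rewrites this (using $\mathcal{N}(M)=\mathcal{N}(A^{\sim})$) as $A^{\sim}=A^{\sim}AA^{\sim}X$, takes the Minkowski adjoint to get $A=X^{\sim}AA^{\sim}A$, and invokes Theorem \ref{XYchaAmMPth}, which delivers existence and the formula $A^{\mathfrak{m}}=(X^{\sim}A)^{\sim}=A^{\sim}X$ in a single stroke. You instead get existence from Theorem \ref{IndchaAmMPth}\eqref{IndchaAmMPit3} and then need a separate group-inverse computation ($XM=MM^{\#}$, columns of $X-M^{\#}$ lying in $\mathcal{N}(M)=\mathcal{N}(A^{\sim})$) together with the identity $A^{\mathfrak{m}}=A^{\sim}(AA^{\sim})^{\#}$ recorded after Theorem \ref{mMPexitscondition} to recover \eqref{MinMPAeAwXeq}. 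Your route buys conceptual clarity: it explains where the pair $(X,Y)$ comes from and identifies $X$ with $(AA^{\sim})^{\#}$ modulo $\mathcal{N}(A^{\sim})$. The paper's route buys economy, and it also sidesteps one small wrinkle in yours: $XM^{2}=M$ only yields ${\rm rank}(M^{2})={\rm rank}(M)$, i.e.\ ${\rm Ind}(M)\leq 1$, and when $M$ is nonsingular one has ${\rm Ind}(M)=0$, so Theorem \ref{IndchaAmMPth}\eqref{IndchaAmMPit3} does not literally apply. That case is trivial (then ${\rm rank}(A)=m$ and existence is immediate), and the paper itself reads ``index one'' loosely in Lemma \ref{ATS2Gchale} and Theorem \ref{IndchaAmMPth}, but Theorem \ref{XYchaAmMPth} carries no index hypothesis at all, so the paper's argument needs no such caveat.
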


\begin{proof}
\eqref{SylvestermMPitem1} $\Rightarrow$ \eqref{SylvestermMPitem2}.
If $A^{\mathfrak{m}}$ exists, it is clear by Lemma \ref{minMPexistconlem} to see that ${\rm rank}(AA^{\sim})={\rm rank}(A^{\sim})$. Let $Q=AA^{\sim}+I_m-AA^{\mathfrak{m}}$.
By items \eqref{mMPRN} and \eqref{AAmMPeq} in Lemma \ref{mMPprole}, it is easily be verified that $Q((A^{\sim})^{\mathfrak{m}}A^{\mathfrak{m}}+I_m-AA^{\mathfrak{m}})=I_m$, showing   the nonsingularity of $Q$.
And, $AA^{\mathfrak{m}}Q=QAA^{\mathfrak{m}}=AA^{\sim}$.
Denote $Y=I_m-AA^{\mathfrak{m}}$. Clearly, $Y^2=Y$ and $AA^{\sim}Y=YAA^{\sim}=0$.
Let $X=AA^{\mathfrak{m}}Q^{-1}-Y$. Hence,
\begin{align*}
XAA^{\sim}&=(AA^{\mathfrak{m}}Q^{-1}-Y)AA^{\mathfrak{m}}Q =AA^{\mathfrak{m}}Q^{-1}QAA^{\mathfrak{m}}=AA^{\mathfrak{m}},\\
AA^{\sim}X&=QAA^{\mathfrak{m}}(AA^{\mathfrak{m}}Q^{-1}-Y) =QAA^{\mathfrak{m}}Q^{-1}=AA^{\mathfrak{m}}QQ^{-1}=AA^{\mathfrak{m}},\\
-YX&=-Y(AA^{\mathfrak{m}}Q^{-1}-Y)=-YAA^{\mathfrak{m}}Q^{-1}+Y=Y.
\end{align*}
Evidently, $XAA^{\sim}=AA^{\sim}X$ and $XAA^{\sim}-YX=I_m$. \par
\eqref{SylvestermMPitem2} $\Rightarrow$ \eqref{SylvestermMPitem1}.  Premultiplying \eqref{XAAYAEq}  by $AA^{\sim}$, we have
$AA^{\sim}XAA^{\sim}-AA^{\sim}YX=AA^{\sim}$,  which, together with $AA^{\sim}X=XAA^{\sim}$ and $AA^{\sim}Y=0$,  yields that $AA^{\sim}AA^{\sim}X=AA^{\sim}$ if and only if $\mathcal{R}(AA^{\sim}X-I_m)\subseteq \mathcal{N}(AA^{\sim})$. Since $\mathcal{N}(AA^{\sim})=\mathcal{N}(A^{\sim})$ from ${\rm rank}(AA^{\sim})={\rm rank}(A^{\sim})$, we get $A^{\sim}=A^{\sim}AA^{\sim}X$, i.e.,
\begin{equation}\label{AAAXeA}
A=X^{\sim}AA^{\sim}A.
\end{equation}
Consequently, $A^{\mathfrak{m}}$ exists in terms of  items \eqref{inverchamMPit1} and \eqref{inverchamMPit3} in Theorem \ref{XYchaAmMPth}.\par
Finally, if $A^{\mathfrak{m}}$ exists, applying Theorem \ref{XYchaAmMPth} to \eqref{AAAXeA}  we have \eqref{MinMPAeAwXeq} directly.
\end{proof}

\begin{remark}
Let $A\in\mathbb{C}^{m\times n}$. Using a easy result that $A^{\mathfrak{m}}$ exists if and only if $(A^{\sim})^{\mathfrak{m}}$ exists, by Theorem \ref{SylvestermMPth} we conclude that the following statements are equivalent:
\begin{enumerate}[$(1)$]
  \item $A^{\mathfrak{m}}$ exists;
  \item ${\rm rank}(A^{\sim}A)={\rm rank}(A)$ and there exist $X\in\mathbb{C}^{n\times n}$ and a projector $Y\in\mathbb{C}^{n\times n}$ such that $XA^{\sim}A-YX=I_n$,
 $A^{\sim}AX=XA^{\sim}A$ and $A^{\sim}AY=0$.
\end{enumerate}
In this case, $A^{\mathfrak{m}}=(AX)^{\sim}$.
\end{remark}

\section{Characterizing Minkowski inverse by rank equation}\label{characterrankminMPsec}
It is well known that the Hartwig-Spindelb\"{o}ck decomposition is an effective and basic tool for finding representations of various generalized inverses and matrix classes (see \cite{BakHSdec,HSdec}).
A new condition for the existence of the Minkowski inverse is given by the Hartwig-Spindelb\"{o}ck decomposition in this section. Under this condition, we present a new representation of the Minkowski inverse. We first  introduce the following notations used in the section.
 \par
For $A\in\mathbb{C}^{n\times n}$ given by \eqref{HSAdec} in   Lemma \ref{HSth}, let
\begin{equation*}
\left(
           \begin{array}{cc}
             G_1 & G_2 \\
             G_3 & G_4 \\
           \end{array}
         \right)= U^* GU,
 \end{equation*}
 where $G_1\in\mathbb{C}^{r\times r}$, $G_2\in\mathbb{C}^{r\times (n-r)}$, $G_3\in\mathbb{C}^{(n-r)\times r}$ and $G_4\in\mathbb{C}^{(n-r)\times (n-r)}$, and let
 \begin{equation*}
 \Delta= \left(
            \begin{array}{cc}
              K & L \\
            \end{array}
          \right)
         U^* GU
         \left(
            \begin{array}{c}
              K^* \\ L^* \\
            \end{array}
          \right).
 \end{equation*}
\begin{theorem}\label{HSmMpdecth}
Let $A$ be given in \eqref{HSAdec}.
\begin{enumerate}[$(1)$]
  \item\label{HSmMpdecitem1} ${\rm rank}(A)={\rm rank}(AA^{\sim})$ if and only if $ \Delta$  is nonsingular.
  \item\label{HSmMpdecitem2} ${\rm rank}(A)={\rm rank}(A^{\sim}A)$ if and only if $G_1$ is nonsingular.
 \item\label{HSmMpdecitem3} If $ \Delta$  and $G_1$ are nonsingular, then
 \begin{align}
A^{\mathfrak{m}}&=GU\left(
                     \begin{array}{cc}
                       K^*(G_1\Sigma\Delta)^{-1} & 0 \\
                       L^*(G_1\Sigma\Delta)^{-1} & 0 \\
                     \end{array}
                   \right)U^*G\label{HSmMpdec}\\
        & =   U\left(
            \begin{array}{cc}
            (G_1K^*+G_2L^*)(\Sigma\Delta)^{-1}& (G_1K^*+G_2L^*)(G_1\Sigma\Delta)^{-1}G_2 \\
             (G_3K^*+G_4L^*)(\Sigma\Delta)^{-1}      & (G_3K^*+G_4L^*)(G_1\Sigma\Delta)^{-1}G_2 \\
          \end{array}
          \right)U^*.\label{HSmMpdecUU}
\end{align}
\end{enumerate}
\end{theorem}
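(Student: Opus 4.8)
The plan is to push everything through the block structure of the decomposition \eqref{HSAdec}, using repeatedly that $U$ is unitary and that $G$ and $\Sigma$ are invertible. Throughout I abbreviate $M=U^{*}GU$, whose blocks are the $G_i$, and I use $A^{\sim}=GA^{*}G$ (both metric matrices have order $n$ here).

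First I would compute the two relevant products by block multiplication:
\begin{equation*}
AA^{\sim}=U\begin{pmatrix}\Sigma\Delta\Sigma&0\\0&0\end{pmatrix}U^{*}G,\qquad A^{\sim}A=GU\begin{pmatrix}K^{*}\\L^{*}\end{pmatrix}\Sigma G_1\Sigma\begin{pmatrix}K&L\end{pmatrix}U^{*}G .
\end{equation*}
For the first product, expanding the top-left block gives $\Sigma\big[(KG_1+LG_3)K^{*}+(KG_2+LG_4)L^{*}\big]\Sigma$, and the bracketed factor is exactly $\Delta$. Since $U$, $G$ and $\Sigma$ are invertible, ${\rm rank}(AA^{\sim})={\rm rank}(\Delta)$, so ${\rm rank}(A)=r={\rm rank}(AA^{\sim})$ holds iff $\Delta$ is nonsingular, which is assertion (1). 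For assertion (2), the hypothesis $KK^{*}+LL^{*}=I_r$ says that $\left(\begin{smallmatrix}K&L\end{smallmatrix}\right)$ has full row rank $r$ and $\left(\begin{smallmatrix}K^{*}\\L^{*}\end{smallmatrix}\right)$ full column rank $r$; cancelling these outer full-rank factors gives ${\rm rank}(A^{\sim}A)={\rm rank}(\Sigma G_1\Sigma)={\rm rank}(G_1)$, so ${\rm rank}(A)={\rm rank}(A^{\sim}A)$ iff $G_1$ is nonsingular.

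For assertion (3), nonsingularity of both $\Delta$ and $G_1$ forces ${\rm rank}(AA^{\sim})={\rm rank}(A^{\sim}A)={\rm rank}(A)$, so $A^{\mathfrak{m}}$ exists by Lemma \ref{minMPexistconlem} and Lemma \ref{minMPfullreprelemma} is available. I would use the full rank factorization $A=BC$ with $B=U\left(\begin{smallmatrix}\Sigma\\0\end{smallmatrix}\right)$ and $C=\left(\begin{smallmatrix}K&L\end{smallmatrix}\right)U^{*}$ (here $B$ has rank $r$ since $\Sigma$ is invertible, and $C$ has rank $r$ since $\left(\begin{smallmatrix}K&L\end{smallmatrix}\right)$ does). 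The two key evaluations are $CGC^{*}=\Delta$ and $B^{*}GB=\Sigma G_1\Sigma$; moreover, in the combinations $C^{\sim}(CC^{\sim})^{-1}$ and $(B^{\sim}B)^{-1}B^{\sim}$ the auxiliary metric of order $r$ cancels, so Lemma \ref{minMPfullreprelemma} collapses to $A^{\mathfrak{m}}=GC^{*}(CGC^{*})^{-1}(B^{*}GB)^{-1}B^{*}G$. Substituting and using $\Delta^{-1}\Sigma^{-1}G_1^{-1}=(G_1\Sigma\Delta)^{-1}$ gives \eqref{HSmMpdec}; then rewriting $GU=UM$ and $U^{*}G=MU^{*}$, performing the resulting block product, and simplifying with $(G_1\Sigma\Delta)^{-1}G_1=(\Sigma\Delta)^{-1}$ gives \eqref{HSmMpdecUU}.

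The main obstacle I anticipate is bookkeeping rather than ideas: Lemma \ref{minMPfullreprelemma} involves Minkowski adjoints of $B$ and $C$ taken with respect to three metric matrices of different orders, and one must verify carefully that the order-$r$ metric drops out so that the stated formula is independent of it. Once that reduction is in place the proof is just disciplined block multiplication, the only genuinely substantive algebraic observations being the identities $CGC^{*}=\Delta$, $B^{*}GB=\Sigma G_1\Sigma$ and $(G_1\Sigma\Delta)^{-1}G_1=(\Sigma\Delta)^{-1}$ that make the two displayed forms coincide.
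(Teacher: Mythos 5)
Your proposal is correct and follows essentially the same route as the paper's proof: the same block computations through $U^{*}GU$ establish items (1) and (2), and item (3) uses exactly the paper's full rank factorization $B=U\left(\begin{smallmatrix}\Sigma\\ 0\end{smallmatrix}\right)$, $C=\left(\begin{smallmatrix}K & L\end{smallmatrix}\right)U^{*}$ together with Lemma \ref{minMPfullreprelemma}, with the order-$r$ metric cancelling precisely as you anticipate and the same simplifications $\Delta^{-1}\Sigma^{-1}G_1^{-1}=(G_1\Sigma\Delta)^{-1}$ and $(G_1\Sigma\Delta)^{-1}G_1=(\Sigma\Delta)^{-1}$. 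The only slip is a harmless typo: your display for $A^{\sim}A$ carries a spurious trailing $G$, since $A^{\sim}A=GU\left(\begin{smallmatrix}K^{*}\\ L^{*}\end{smallmatrix}\right)\Sigma G_1\Sigma\left(\begin{smallmatrix}K & L\end{smallmatrix}\right)U^{*}$, which does not affect the rank argument.
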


\begin{proof}
\eqref{HSmMpdecitem1}. Using the Hartwig-Spindelb\"{o}ck decomposition, we have \begin{align*}
{\rm rank}(A)={\rm rank}(AA^{\sim}) &\Leftrightarrow {\rm rank}(A)= {\rm rank}\left(
   \left(
     \begin{array}{cc}
       \Sigma K & \Sigma L \\
       0 & 0 \\
     \end{array}
   \right)
U^*GU
   \left(
     \begin{array}{cc}
       (\Sigma K)^* & 0 \\
       (\Sigma L)^* & 0 \\
     \end{array}
   \right)
   \right)\\
 &\Leftrightarrow  {\rm rank}(A)={\rm rank}\left(
 \left(
   \begin{array}{cc}
        K    &    L \\
   \end{array}
 \right)
U^*GU
 \left(
   \begin{array}{c}
    K^*  \\
     L^* \\
   \end{array}
 \right)
 \right),
\end{align*}
which is equivalent to that $\Delta$  is nonsingular.
\par
\eqref{HSmMpdecitem2}.
Since $\left(
  \begin{array}{cc}
    \Sigma K & \Sigma L\\
  \end{array}
\right)$ is of full row rank by \eqref{HSKLconditioneq},
using again the Hartwig-Spindelb\"{o}ck decomposition we derive   that
\begin{align*}
{\rm rank}(A)={\rm rank}(A^{\sim}A) &\Leftrightarrow
{\rm rank}(A) ={\rm rank}\left(
   \left(
     \begin{array}{cc}
       (\Sigma K)^* & 0 \\
       (\Sigma L)^* & 0 \\
     \end{array}
   \right)\left(
           \begin{array}{cc}
             G_1 & G_2 \\
             G_3 & G_4 \\
           \end{array}
         \right)
         \left(
     \begin{array}{cc}
       \Sigma K & \Sigma L \\
       0 & 0 \\
     \end{array}
   \right)
\right)\\
&\Leftrightarrow
{\rm rank}(A) ={\rm rank}\left(
\left(
  \begin{array}{cc}
    \Sigma K & \Sigma L\\
  \end{array}
\right)^*G_1
\left(
  \begin{array}{cc}
    \Sigma K & \Sigma L\\
  \end{array}
\right)
\right)\\
&\Leftrightarrow
{\rm rank}(A)={\rm rank}(G_1),
\end{align*}
which is equivalent to that $G_1$ is nonsingular.
\par
\eqref{HSmMpdecitem3}.
Note that $A$ given in \eqref{HSAdec} can be rewritten as
\begin{equation}\label{AHSdecrefullrankeq}
A=U\left(
     \begin{array}{c}
        \Sigma \\
       0 \\
     \end{array}
   \right)
   \left(\begin{array}{cc}
                K & L \\
            \end{array}
          \right)U^*,
\end{equation}
where
$B:=U\left(\begin{array}{c}
        \Sigma \\0 \\
     \end{array}
   \right)$
   and
$C:=\left(\begin{array}{cc}
                K & L \\
            \end{array}
          \right)U^*$
are of  full column rank and full  row rank, respectively.
If $ \Delta$  and $G_1$ are nonsingular, by items \eqref{HSmMpdecitem1} and \eqref{HSmMpdecitem2} and Lemma \ref{minMPexistconlem} we see that $A^{\mathfrak{m}}$ exists.
Therefore, applying Lemma \ref{minMPfullreprelemma} to \eqref{AHSdecrefullrankeq} yields that
 \begin{align*}
  A^{\mathfrak{m}}= & C^{\sim}(CC^{\sim})^{-1}(B^{\sim}B)^{-1}B^{\sim}\\
    =& GU\left(
     \begin{array}{c}
        K^* \\
       L^* \\
     \end{array}
   \right)G\left(
   \left(\begin{array}{cc}
                K & L \\
            \end{array}
          \right)U^*GU\left(
     \begin{array}{c}
        K^* \\
       L^* \\
     \end{array}
   \right)G \right)^{-1}
   \\&
   \left(G\left(\begin{array}{cc}
               {\Sigma} & 0 \\
            \end{array}
          \right)U^*G
          U\left(\begin{array}{c}
        \Sigma \\0 \\
     \end{array}
   \right)
   \right)^{-1}G\left(\begin{array}{cc}
               {\Sigma} & 0 \\
            \end{array}
          \right)U^*G\\
  =&GU\left(
     \begin{array}{c}
        K^* \\
       L^* \\
     \end{array}
   \right){\Delta}^{-1}(\Sigma G_1 \Sigma)^{-1}\left(\begin{array}{cc}
               {\Sigma} & 0 \\
            \end{array}
          \right)U^*G\\
   =&GU\left(
                     \begin{array}{cc}
                       K^*(G_1\Sigma\Delta)^{-1} & 0 \\
                       L^*(G_1\Sigma\Delta)^{-1} & 0 \\
                     \end{array}
                   \right)U^*G\\
   =&U\left(
           \begin{array}{cc}
             G_1 & G_2 \\
             G_3 & G_4 \\
           \end{array}
         \right)
         \left(
            \begin{array}{cc}
                  K^*(G_1\Sigma\Delta)^{-1} & 0 \\
                 L^*(G_1\Sigma\Delta)^{-1} & 0 \\
          \end{array}
          \right)
           \left(
           \begin{array}{cc}
             G_1 & G_2 \\
             G_3 & G_4 \\
           \end{array}
         \right)U^*\\
       =& U\left(
            \begin{array}{cc}
            (G_1K^*+G_2L^*)(\Sigma\Delta)^{-1}& (G_1K^*+G_2L^*)(G_1\Sigma\Delta)^{-1}G_2 \\
             (G_3K^*+G_4L^*)(\Sigma\Delta)^{-1}      & (G_3K^*+G_4L^*)(G_1\Sigma\Delta)^{-1}G_2 \\
          \end{array}
          \right)U^*,
 \end{align*}
 which completes the proof of this theorem.
\end{proof}

\begin{example}
In order to illustrate Theorem \ref{HSmMpdecth}, let us consider the matrix $A$ given in Example \ref{minMPmooredefce}. Then the Hartwig-Spindelb\"{o}ck decomposition of $A$ is
\begin{equation*}
  A=U\left(
     \begin{array}{cc}
       \Sigma K & \Sigma L \\
       0 & 0 \\
     \end{array}
   \right)U^*,
\end{equation*}
where
\begin{align*}
  U&=\left(
     \begin{array}{ccccc}
-0.73056&0.27137&-0.62661&0&0\\-0.27429&-0.95698&-0.094654&0&0\\-0.62534&0.10272&0.77356&0&0\\0&0&0&0&1\\0&0&0&1&0\\
     \end{array}
   \right),
\Sigma =\left(
          \begin{array}{ccc}
2.635&0&0\\0&1.2685&0\\0&0&0.66897\\
          \end{array}
        \right),\\
        K&=\left(
          \begin{array}{ccc}
0.71899&0.42393&0.16652\\-0.22319&0.54174&0.024188\\0.40383&-0.11142&-0.86962\\
          \end{array}
        \right),
L=\left(
          \begin{array}{ccc}
-0.51457&-0.10409\\0.29491&-0.75442\\0.21966&-0.14149\\
          \end{array}
        \right).
\end{align*}
And, we have that
\begin{equation*}
  G_1=
  \left(
    \begin{array}{ccc}
  0.06743&-0.3965&0.91556\\-0.3965&-0.85272&-0.34009 \\0.91556&-0.34009&-0.21471\\
    \end{array}
  \right),
  \Delta=
   \left(
    \begin{array}{ccc}
-0.47044&-0.3035&-0.22606\\-0.3035&-0.82606&0.12956\\-0.22606&0.12956&-0.9035\\
    \end{array}
  \right).
\end{equation*}
Thus, it is easy to check that ${\rm rank}(G_1)={\rm rank}(\Delta)=3$. Moreover, $A^{\mathfrak{m}}$ calculated by \eqref{HSmMpdec} or \eqref{HSmMpdecUU} is the same as that in Example \ref{minMPmooredefce}, so it is omitted.
\end{example}
\par
Gro{\ss} \cite{solution} considered an interesting problem what characterizations of $B$ and $C$ are when $X=A^{\dag}$ is assumed to be the unique solution of \eqref{rankeq} in Lemma \ref{raneconle}.
This issue was once more revisited by \cite{solutionDinversere} and \cite{solutionCoreinversere} on the Drazin inverse and the core inverse, respectively.
Subsequently, we apply Theorem \ref{HSmMpdecth} to provide another  characterization  of the Minkowski inverse.

\begin{theorem}\label{rankmMPth1}
Let $A$ be given in \eqref{HSAdec} with ${\rm rank}(A^{\sim}AA^{\sim})={\rm rank}(A)$, and let $X\in\mathbb{C}^{n\times n}$. Then $X=A^{\mathfrak{m}}$ is the unique solution of  the rank equation   \eqref{rankeq}
if and only if
\begin{equation}\label{BandCdec}
B=U\left(
     \begin{array}{cc}
       B_1 & B_2 \\
       0 & 0 \\
     \end{array}
   \right)
U^*G \text{ and  }
C=GUTU^*,
\end{equation}
where
\begin{align*}
     T&=\left(
                \begin{array}{cc}
                  J_1\Sigma K &  J_1\Sigma L \\
                   J_3\Sigma K &  J_3\Sigma L \\
                \end{array}
              \right),\\
B_1&=\left(
       \begin{array}{cc}
         \Sigma K& \Sigma L \\
       \end{array}
     \right)
     \left[
       \begin{array}{c}
         {T}^{(1)}\left(
                         \begin{array}{c}
                           K^*(G_1\Sigma\Delta)^{-1} \\
                            L^*(G_1\Sigma\Delta)^{-1}  \\
                         \end{array}
                       \right)\\
       \end{array}+(I_n-T^{(1)}T)Y_1
     \right],\\
     B_2&=\left(
       \begin{array}{cc}
         \Sigma K& \Sigma L \\
       \end{array}
     \right)(I_n-T^{(1)}T)Y_2,
\end{align*}
where $J_1\in\mathbb{C}^{r\times r}$ and $J_3\in\mathbb{C}^{(n-r)\times r}$ satisfy $\mathcal{N}(T^*)\subseteq \mathcal{N} \left(\left(\begin{array}{cc} K & L \\ \end{array}\right)\right)$,  $Y_1\in\mathbb{C}^{n\times r}$ and $Y_2\in\mathbb{C}^{n\times (n-r)}$ are arbitrary, and ${T}^{(1)}\in T\{1\}$.
\end{theorem}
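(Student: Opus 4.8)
The plan is to reduce the rank-equation condition to the Gro{\ss} criterion (Lemma \ref{raneconle}) and then translate everything into Hartwig-Spindelb\"ock coordinates. Since ${\rm rank}\begin{pmatrix} A & B \\ C & X\end{pmatrix}={\rm rank}(A)+{\rm rank}(X-CA^{\dag}B)$ whenever $\mathcal{R}(B)\subseteq\mathcal{R}(A)$ and $\mathcal{R}(C^{*})\subseteq\mathcal{R}(A^{*})$, Lemma \ref{raneconle} shows that ``$X=A^{\mathfrak{m}}$ is the unique solution of \eqref{rankeq}'' is equivalent to the three conditions $\mathcal{R}(B)\subseteq\mathcal{R}(A)$, $\mathcal{R}(C^{*})\subseteq\mathcal{R}(A^{*})$, and $CA^{\dag}B=A^{\mathfrak{m}}$. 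The whole proof then amounts to parametrizing all pairs $(B,C)$ meeting these three conditions, so that the forward and converse implications are obtained together.

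First I would record the two ingredients coming from \eqref{HSAdec}. Using $KK^{*}+LL^{*}=I_{r}$ one checks that $A=B_0C_0$ with $B_0=U\begin{pmatrix}\Sigma\\0\end{pmatrix}$ of full column rank and $C_0=\begin{pmatrix}K & L\end{pmatrix}U^{*}$ of full row rank is a full-rank factorization, whence $A^{\dag}=U\begin{pmatrix}K^{*}\Sigma^{-1} & 0\\ L^{*}\Sigma^{-1} & 0\end{pmatrix}U^{*}$, while $A^{\mathfrak{m}}$ is given by \eqref{HSmMpdec}. The hypothesis ${\rm rank}(A^{\sim}AA^{\sim})={\rm rank}(A)$ guarantees through Theorem \ref{mMPexitscondition} and Theorem \ref{HSmMpdecth} that $A^{\mathfrak{m}}$ exists and that $G_1,\Delta$ are invertible, so $(G_1\Sigma\Delta)^{-1}$ is meaningful. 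Because $\mathcal{R}(A)$ is spanned by the first $r$ columns of $U$ and $\mathcal{R}(A^{*})=U\,\mathcal{R}\begin{pmatrix}K^{*}\\L^{*}\end{pmatrix}$, the two range inclusions are equivalent to the existence of block representations $B=U\begin{pmatrix}B_1 & B_2\\0 & 0\end{pmatrix}U^{*}G$ and $C=GUTU^{*}$ with $T=\begin{pmatrix}J_1\\J_3\end{pmatrix}\Sigma\begin{pmatrix}K & L\end{pmatrix}$; the trailing factor $U^{*}G$ in $B$ and the $G$'s in $C$ are harmless invertible reparametrizations arranged so that the later products telescope.

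Next I would substitute these forms into $CA^{\dag}B=A^{\mathfrak{m}}$. A short computation using $\begin{pmatrix}K & L\end{pmatrix}\begin{pmatrix}K^{*}\\L^{*}\end{pmatrix}=I_r$ gives $CA^{\dag}=GU\begin{pmatrix}J_1 & 0\\ J_3 & 0\end{pmatrix}U^{*}$, so that, after cancelling the invertible factors $GU$ and $U^{*}G$, the identity $CA^{\dag}B=A^{\mathfrak{m}}$ becomes the pair of equations $JB_1=\begin{pmatrix}K^{*}\\L^{*}\end{pmatrix}(G_1\Sigma\Delta)^{-1}$ and $JB_2=0$, where $J=\begin{pmatrix}J_1\\J_3\end{pmatrix}$. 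Writing $B_1=\begin{pmatrix}\Sigma K & \Sigma L\end{pmatrix}\xi$ (legitimate since $\Sigma\begin{pmatrix}K & L\end{pmatrix}$ is surjective, with $\begin{pmatrix}K^{*}\\L^{*}\end{pmatrix}\Sigma^{-1}$ a right inverse) and using $J\Sigma\begin{pmatrix}K & L\end{pmatrix}=T$ turns the first equation into $T\xi=\begin{pmatrix}K^{*}(G_1\Sigma\Delta)^{-1}\\ L^{*}(G_1\Sigma\Delta)^{-1}\end{pmatrix}$; the correspondence $\xi\mapsto B_1$ is a bijection between solutions of this $T$-equation and admissible $B_1$, and analogously $B_2=\begin{pmatrix}\Sigma K & \Sigma L\end{pmatrix}\xi$ with $T\xi=0$. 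Describing the general solutions by Lemma \ref{AXDsolle} as $\xi=T^{(1)}(\cdots)+(I_n-T^{(1)}T)Y_1$ and $\xi=(I_n-T^{(1)}T)Y_2$ then reproduces the stated formulas for $B_1$ and $B_2$.

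The step I expect to be the crux is identifying the side condition on $J_1,J_3$. The equation $T\xi=\begin{pmatrix}K^{*}(G_1\Sigma\Delta)^{-1}\\ L^{*}(G_1\Sigma\Delta)^{-1}\end{pmatrix}$ is consistent if and only if its right-hand side lies in $\mathcal{R}(T)$. Since $\Sigma\begin{pmatrix}K & L\end{pmatrix}$ has full row rank, one has $\mathcal{R}(T)=\mathcal{R}(J)$ and $\mathcal{N}(T^{*})=\mathcal{N}(J^{*})$, while invertibility of $(G_1\Sigma\Delta)^{-1}$ gives $\mathcal{R}\begin{pmatrix}K^{*}(G_1\Sigma\Delta)^{-1}\\ L^{*}(G_1\Sigma\Delta)^{-1}\end{pmatrix}=\mathcal{R}\begin{pmatrix}K^{*}\\L^{*}\end{pmatrix}$. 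Thus consistency reads $\mathcal{R}\begin{pmatrix}K^{*}\\L^{*}\end{pmatrix}\subseteq\mathcal{R}(J)$, and passing to orthogonal complements converts this into $\mathcal{N}(J^{*})\subseteq\mathcal{N}\begin{pmatrix}K & L\end{pmatrix}$, i.e.\ precisely the requirement $\mathcal{N}(T^{*})\subseteq\mathcal{N}\begin{pmatrix}K & L\end{pmatrix}$ imposed in the statement. Verifying that this complement-swap is exactly the recorded condition, and that it is \emph{necessary} for consistency rather than merely sufficient, is the delicate bookkeeping; once it is in place, collecting the forward and converse inclusions established above completes both directions at once.
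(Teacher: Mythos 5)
Your proposal is correct and follows essentially the same route as the paper's own proof: reduction via Gro{\ss}'s Lemma \ref{raneconle} to the three conditions $\mathcal{R}(B)\subseteq\mathcal{R}(A)$, $\mathcal{R}(C^{*})\subseteq\mathcal{R}(A^{*})$, $CA^{\dag}B=A^{\mathfrak{m}}$, passage to Hartwig--Spindelb\"{o}ck block coordinates (the paper writes $B=AH$, $C=JA$, and the blocks $\bigl(\begin{smallmatrix}H_1\\H_3\end{smallmatrix}\bigr)$, $\bigl(\begin{smallmatrix}H_2\\H_4\end{smallmatrix}\bigr)$ of $H$ play exactly the role of your $\xi$), comparison with \eqref{HSmMpdec}, and solution of the resulting system $T\xi=D$, $T\xi=0$ by Lemma \ref{AXDsolle}, with the same consistency condition $\mathcal{N}(T^{*})\subseteq\mathcal{N}\bigl(\begin{smallmatrix}K&L\end{smallmatrix}\bigr)$. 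The only inaccuracy is calling $\xi\mapsto B_1=\bigl(\begin{smallmatrix}\Sigma K&\Sigma L\end{smallmatrix}\bigr)\xi$ a bijection---it is only a surjective parametrization, since solutions differing by an element of $\mathcal{N}\bigl(\Sigma\bigl(\begin{smallmatrix}K&L\end{smallmatrix}\bigr)\bigr)\subseteq\mathcal{N}(T)$ give the same $B_1$---but only surjectivity onto the admissible $B_1$ is actually used, so the argument stands.
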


\begin{proof}
We first prove the ``only if" part. If $X=A^{\mathfrak{m}}$ is the unique solution of   \eqref{rankeq},  from Lemma \ref{raneconle} we have
$ B=AH$ and $ C=JA$
for some $H,J\in\mathbb{C}^{n\times n}$. Put
\begin{equation*}
\left(
          \begin{array}{cc}
            H_1 & H_2 \\
            H_3 & H_4 \\
          \end{array}
        \right)= U^*HGU,
\left(
          \begin{array}{cc}
            J_1 & J_2 \\
            J_3 & J_4 \\
          \end{array}
        \right)= U^*GJU,
\end{equation*}
where $H_1,J_1\in\mathbb{C}^{r\times r}$, $H_2,J_2\in\mathbb{C}^{r\times (n-r)}$, $H_3,J_3\in\mathbb{C}^{(n-r)\times r}$, and $H_4,J_4\in\mathbb{C}^{(n-r)\times (n-r)}$.  Thus
\begin{equation}\label{BHSdec}
B=AH=U\left(
          \begin{array}{cc}
            \Sigma KH_1+\Sigma L H_3 &  \Sigma KH_2+\Sigma L H_4 \\
            0 & 0 \\
          \end{array}
        \right)U^*G,
\end{equation}
\begin{equation}\label{CHSdec}
C=JA=GU\left(
          \begin{array}{cc}
          J_1 \Sigma K& J_1 \Sigma L \\
            J_3   \Sigma K &   J_3   \Sigma L \\
          \end{array}
        \right)U^*.
\end{equation}
Note that \cite[Formula (1.4)]{BakHSdec} has  shown  that
\begin{equation}\label{AMPHSdec}
A^{\dag}=U\left(
          \begin{array}{cc}
            K^*\Sigma^{-1} & 0 \\
            L^*\Sigma^{-1} & 0 \\
          \end{array}
        \right)U^*.
\end{equation}
Then inserting  \eqref{BHSdec}, \eqref{CHSdec} and \eqref{AMPHSdec} to \eqref{ranksolution} gives
\begin{equation}\label{XHSdec}
X=GU
\left(
  \begin{array}{cc}
    J_1\Sigma K H_1+J_1\Sigma L H_3 &   J_1\Sigma K H_2+J_1\Sigma L H_4 \\
      J_3\Sigma K H_1+J_3\Sigma L H_3  &  J_3\Sigma K H_2+J_3\Sigma L H_4  \\
  \end{array}
\right)
U^*G.
\end{equation}
By a comparison of  \eqref{HSmMpdec} in Theorem  \ref{HSmMpdecth} with  \eqref{XHSdec}, we see that
\begin{align*}
X=A^{\mathfrak{m}}&\Leftrightarrow{\left\{ {\begin{array}{*{20}{l}}
 J_3\Sigma K H_1+J_3\Sigma L H_3 =   L^*(G_1\Sigma\Delta)^{-1},\\
 J_3\Sigma K H_1+J_3\Sigma L H_3=   L^*(G_1\Sigma\Delta)^{-1},\\
 J_1\Sigma K H_2+J_1\Sigma L H_4=0,\\
   J_3\Sigma K H_2+J_3\Sigma L H_4=0,
\end{array}} \right.}
\end{align*}
which can be rewritten as
\begin{equation}\label{H1H3H2H4}
T\left(
          \begin{array}{c}
                H_1 \\
            H_3 \\
          \end{array}
        \right)=\left(
          \begin{array}{c}
                K^*(G_1\Sigma\Delta)^{-1} \\
           L^*(G_1\Sigma\Delta)^{-1} \\
          \end{array}
        \right),
    T\left(
          \begin{array}{c}
                H_2 \\
            H_4 \\
          \end{array}
        \right)=  \left(
          \begin{array}{c}
               0 \\
           0 \\
          \end{array}
        \right),
\end{equation}
where $T=\left(
                \begin{array}{cc}
                  J_1\Sigma K &  J_1\Sigma L \\
                   J_3\Sigma K &  J_3\Sigma L \\
                \end{array}
              \right)$.
Applying Lemma \ref{AXDsolle} to \eqref{H1H3H2H4}, we conclude that
$J_1\in\mathbb{C}^{r\times r}$ and $J_3\in\mathbb{C}^{(n-r)\times r}$ satisfy
\begin{align*}
TT^{(1)}\left(
                         \begin{array}{c}
                           K^*(G_1\Sigma\Delta)^{-1} \\
                            L^*(G_1\Sigma\Delta)^{-1}  \\
                         \end{array}
                       \right)=
                       \left(
                         \begin{array}{c}
                           K^*(G_1\Sigma\Delta)^{-1} \\
                            L^*(G_1\Sigma\Delta)^{-1}  \\
                         \end{array}
                       \right)
                       &\Leftrightarrow
                       \mathcal{R}\left(\left(
                         \begin{array}{c}
                           K^*(G_1\Sigma\Delta)^{-1} \\
                            L^*(G_1\Sigma\Delta)^{-1}  \\
                         \end{array}
                       \right)\right) \subseteq \mathcal{R}(T)\\
           &  \Leftrightarrow
            \mathcal{R}\left(\left(
                         \begin{array}{c}
                           K^* \\
                            L^*  \\
                         \end{array}
                       \right)\right) \subseteq \mathcal{R}(T) \\
                          &  \Leftrightarrow
        \mathcal{N}(T^*)    \subseteq  \mathcal{N}\left(\left(
                         \begin{array}{cc}
                           K & L
                         \end{array}
                       \right)\right),
\end{align*}
 and
\begin{align}
\left(
  \begin{array}{c}
    H_1 \\
    H_3 \\
  \end{array}
\right)&=
T^{(1)}
\left(
  \begin{array}{c}
 K^*(G_1\Sigma\Delta){-1} \\
  L^*(G_1\Sigma\Delta){-1}  \\
  \end{array}
\right)
+(I_n-T^{(1)}T)Y_1,\label{B1HSdec}
\\
\left(
  \begin{array}{c}
    H_2 \\
    H_4 \\
  \end{array}
\right)&=(I_n-T^{(1)}T)Y_2,\label{B2HSdec}
\end{align}
where $Y_1\in\mathbb{C}^{n\times r}$ and $Y_2\in\mathbb{C}^{n\times (n-r)}$ are arbitrary, and ${T}^{(1)}\in T\{1\}$. Hence, premultiplying \eqref{B1HSdec} and \eqref{B2HSdec} by
$\left(
       \begin{array}{cc}
         \Sigma K& \Sigma L \\
       \end{array}
     \right)$,
from   \eqref{BHSdec} and \eqref{CHSdec} we infer that \eqref{BandCdec} holds. Conversely, the ``if" part is easy  and is therefore omitted.
\end{proof}
\par
Notice that in the proof of Theorem \ref{rankmMPth1},  the first equation in \eqref{H1H3H2H4} can be replaced   by
\begin{equation}\label{F1F3H1H3re}
\left(\begin{array}{c}
                J_1 \\
           J_3 \\
          \end{array}
        \right)
\left(
  \begin{array}{cc}
    \Sigma K & \Sigma L \\
  \end{array}
\right)
\left(\begin{array}{c}
                H_1 \\
            H_3 \\
          \end{array}
        \right)=\left(
          \begin{array}{c}
                K^*(G_1\Sigma\Delta)^{-1} \\
           L^*(G_1\Sigma\Delta)^{-1} \\
          \end{array}
        \right),
\end{equation}
which is a second order matrix equation. Then, by applying  Lemma \ref{XAYBsolle} to \eqref{F1F3H1H3re}, different characterizations of $B$ and $C$ given by \eqref{BandCdec} are shown in the next theorem.

\begin{theorem}\label{rankchaminMPsecth}
Let $A$ be given in \eqref{HSAdec} with ${\rm rank}(A^{\sim}AA^{\sim})={\rm rank}(A)$, and let $X\in\mathbb{C}^{n\times n}$. Then $X=A^{\mathfrak{m}}$ is the unique solution of  the rank equation  \eqref{rankeq}
if and only if
\begin{equation}\label{Bdeco2}
B=AU\left(
      \begin{array}{cc}
        Q^{-1}\left(
                \begin{array}{c}
                    X_{1}^{-1} \\
                  Y_{3} \\
                \end{array}
              \right)W
         & \left[I_n-\left({\hat C}\left(
                               \begin{array}{cc}
                                 \Sigma L & \Sigma L \\
                               \end{array}
                             \right)\right)^{(1)}{\hat C}\left(
                               \begin{array}{cc}
                                 \Sigma L & \Sigma L \\
                               \end{array}
                             \right)
         \right]Z
          \\
      \end{array}
    \right)
U^*G,
\end{equation}
\begin{equation}\label{Cdeco2}
C=GU{\hat C}\left(\begin{array}{cc}
           \Sigma L & \Sigma L \\
       \end{array}\right)U^*,
\end{equation}
where
 ${\hat C}=S\left(
             \begin{array}{c}
                X_{1} \\
               0\\
             \end{array}
           \right)P^{-1}$,
 $\left({\hat C}\left( \begin{array}{cc}\Sigma L & \Sigma L \\
\end{array} \right)\right)^{(1)}\in \left({\hat C}\left(
 \begin{array}{cc} \Sigma L & \Sigma L \\\end{array}\right)\right)\{1\}$,
$X_{1}\in\mathbb{C}^{r\times r}$ is an arbitrary nonsingular   matrix, $Y_{3}\in\mathbb{C}^{(n-r)\times r}$ and $Z\in\mathbb{C}^{n\times (n-r)}$ are arbitrary,
and  $P,W\in\mathbb{C}^{r\times r}$ and $Q,S\in\mathbb{C}^{n\times n}$ are all nonsingular matrices such that
\begin{equation}\label{PQSTcon}
  P\left(
  \begin{array}{cc}
    I_r & 0  \\
  \end{array}
\right)Q=\left(\begin{array}{cc}
    \Sigma K & \Sigma L  \\
  \end{array}
\right)
,
S\left(
  \begin{array}{c}
 I_r \\
 0  \\
  \end{array}
\right)W=\left(
  \begin{array}{c}
 K^*(G_1\Sigma\Delta){-1} \\
  L^*(G_1\Sigma\Delta){-1}  \\
  \end{array}
\right).
\end{equation}
\end{theorem}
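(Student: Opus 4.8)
The plan is to reuse, almost verbatim, the reduction already performed in the proof of Theorem~\ref{rankmMPth1}, and to replace only its concluding appeal to Lemma~\ref{AXDsolle} by the two‑sided solution formula of Lemma~\ref{XAYBsolle}. Since ${\rm rank}(A^{\sim}AA^{\sim})={\rm rank}(A)$, the inverse $A^{\mathfrak{m}}$ exists, and Lemma~\ref{raneconle} tells us that the rank equation \eqref{rankeq} admits $X=A^{\mathfrak{m}}$ as its unique solution exactly when $\mathcal{R}(B)\subseteq\mathcal{R}(A)$ and $\mathcal{R}(C^{*})\subseteq\mathcal{R}(A^{*})$, i.e.\ when $B=AH$ and $C=JA$ for some $H,J\in\mathbb{C}^{n\times n}$. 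Passing to Hartwig--Spindelb\"{o}ck coordinates as in \eqref{BHSdec} and \eqref{CHSdec}, and comparing with the representation \eqref{HSmMpdec} of $A^{\mathfrak{m}}$, the whole problem collapses to the block system \eqref{H1H3H2H4} for $H_1,H_3,H_2,H_4,J_1,J_3$. So the first thing I would do is restate this equivalence, so that from here on everything is a matter of solving \eqref{H1H3H2H4}.

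The decisive step is to read the first equation of \eqref{H1H3H2H4} in its second‑order form \eqref{F1F3H1H3re}, treating $\bigl(\begin{smallmatrix}J_1\\J_3\end{smallmatrix}\bigr)$ and $\bigl(\begin{smallmatrix}H_1\\H_3\end{smallmatrix}\bigr)$ as two independent unknowns in a single equation of the type $XAY=B$ handled by Lemma~\ref{XAYBsolle}, with coefficient $\bigl(\begin{smallmatrix}\Sigma K&\Sigma L\end{smallmatrix}\bigr)$ and right‑hand side $\bigl(\begin{smallmatrix}K^{*}(G_1\Sigma\Delta)^{-1}\\ L^{*}(G_1\Sigma\Delta)^{-1}\end{smallmatrix}\bigr)$. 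To invoke that lemma I must first check its rank hypothesis: $\bigl(\begin{smallmatrix}\Sigma K&\Sigma L\end{smallmatrix}\bigr)$ has full row rank $r$ because $\bigl(\begin{smallmatrix}K&L\end{smallmatrix}\bigr)$ does by \eqref{HSKLconditioneq} and $\Sigma$ is nonsingular, while the right‑hand side has rank $r$ because $\bigl(\begin{smallmatrix}K^{*}\\L^{*}\end{smallmatrix}\bigr)$ has full column rank and $G_1\Sigma\Delta$ is nonsingular; hence both have the common rank $r$, which is exactly the condition $r=r_1$ of Lemma~\ref{XAYBsolle}. Fixing the rank factorizations \eqref{PQSTcon}, the lemma then yields $\bigl(\begin{smallmatrix}J_1\\J_3\end{smallmatrix}\bigr)=\hat C$ together with the stated parametrization of $\bigl(\begin{smallmatrix}H_1\\H_3\end{smallmatrix}\bigr)$; the point to watch is that, because $\bigl(\begin{smallmatrix}\Sigma K&\Sigma L\end{smallmatrix}\bigr)$ is $r\times n$ of rank $r$, the blocks $X_2,X_4,Y_4$ in the lemma's solution have zero columns and disappear, collapsing the two parametrizing matrices to $\bigl(\begin{smallmatrix}X_1\\0\end{smallmatrix}\bigr)$ and $\bigl(\begin{smallmatrix}X_1^{-1}\\Y_3\end{smallmatrix}\bigr)$.

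It then remains to solve the second, homogeneous equation of \eqref{H1H3H2H4}, which after substituting $\bigl(\begin{smallmatrix}J_1\\J_3\end{smallmatrix}\bigr)=\hat C$ becomes $M\bigl(\begin{smallmatrix}H_2\\H_4\end{smallmatrix}\bigr)=0$ with $M:=\hat C\,\bigl(\begin{smallmatrix}\Sigma K&\Sigma L\end{smallmatrix}\bigr)$; Lemma~\ref{AXDsolle} gives $\bigl(\begin{smallmatrix}H_2\\H_4\end{smallmatrix}\bigr)=(I_n-M^{(1)}M)Z$ for arbitrary $Z$. I would then reinsert these four blocks and $\hat C$ into \eqref{BHSdec} and \eqref{CHSdec}, equivalently factoring out $A=U\bigl(\begin{smallmatrix}\Sigma K&\Sigma L\\0&0\end{smallmatrix}\bigr)U^{*}$, which assembles precisely into \eqref{Bdeco2} and \eqref{Cdeco2} and finishes the ``only if'' direction. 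The ``if'' direction is the reverse substitution: any $B,C$ of the form \eqref{Bdeco2}--\eqref{Cdeco2} are manifestly of the type $B=AH$, $C=JA$ whose blocks satisfy \eqref{H1H3H2H4}, so Theorem~\ref{rankmMPth1} (equivalently Lemma~\ref{raneconle}) returns $X=A^{\mathfrak{m}}$ as the unique solution, and this part is a direct verification.

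I expect the genuine obstacle to be neither lemma application on its own but the dimensional bookkeeping that links them. One must keep straight that $J_1,J_3$ are simultaneously the ``$X$'' of Lemma~\ref{XAYBsolle} and the left factor $\hat C$ entering the homogeneous equation, verify that the degenerate blocks truly vanish so that the collapsed solution matches the factorizations \eqref{PQSTcon}, and finally carry out the block multiplications turning \eqref{BHSdec}--\eqref{CHSdec} into the compact forms \eqref{Bdeco2}--\eqref{Cdeco2} without losing the freedom encoded in the parameters $X_1,Y_3,Z$.
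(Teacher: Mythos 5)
Your proposal is correct and takes essentially the same route as the paper's own proof: reduce to the block system \eqref{H1H3H2H4} exactly as in Theorem \ref{rankmMPth1}, solve its first equation in the second-order form \eqref{F1F3H1H3re} via Lemma \ref{XAYBsolle} under the factorizations \eqref{PQSTcon}, then substitute $\bigl(\begin{smallmatrix}J_1\\ J_3\end{smallmatrix}\bigr)=\hat C$ into the homogeneous second equation and finish with Lemma \ref{AXDsolle} before reassembling through \eqref{BHSdec}--\eqref{CHSdec}. Your added bookkeeping (verifying $r=r_1$ and the collapse of the degenerate blocks in Lemma \ref{XAYBsolle}) only makes explicit what the paper leaves implicit, and your coefficient $M=\hat C\,(\Sigma K\ \ \Sigma L)$ is what the derivation actually produces --- the repeated $(\Sigma L\ \ \Sigma L)$ appearing in \eqref{Bdeco2}, \eqref{Cdeco2} and \eqref{H2H4dec2} is evidently a typographical slip for it.
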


\begin{proof}
For convenience, we use the same  notations as in the proof of Theorem \ref{rankmMPth1}. First, it is clear  to see the existence of  nonsingular matrices $P,W\in\mathbb{C}^{r\times r}$ and $Q,S\in\mathbb{C}^{n\times n}$ satisfying \eqref{PQSTcon}.
To prove the ``only if" part,
applying Lemma \ref{XAYBsolle} to \eqref{F1F3H1H3re}, we have that
\begin{equation}\label{F1F3H1H3sol}
\left(\begin{array}{c}
               J_1 \\
            J_3 \\
          \end{array}
        \right)=S\left(\begin{array}{c}
                X_1 \\
            0 \\
          \end{array}
        \right)P^{-1} ,
\left(\begin{array}{c}
                H_1 \\
            H_3 \\
          \end{array}
        \right)=Q^{-1}\left(\begin{array}{c}
                X_1^{-1} \\
            Y_3 \\
          \end{array}
        \right)W,
\end{equation}
where $X_{1}\in\mathbb{C}^{r\times r}$ is an arbitrary nonsingular   matrix, $Y_{3}\in\mathbb{C}^{(n-r)\times r}$ is arbitrary.
Note that the second equation in \eqref{H1H3H2H4} can be rewritten as
\begin{equation}\label{F1F3H2H4re}
\left(\begin{array}{c}
                J_1 \\
            J_3 \\
          \end{array}
        \right)
\left(
  \begin{array}{cc}
    \Sigma K & \Sigma L \\
  \end{array}
\right)
\left(\begin{array}{c}
                H_2 \\
            H_4 \\
          \end{array}
        \right)=\left(\begin{array}{c}
                0 \\
            0 \\
          \end{array}
        \right).
\end{equation}
Then, substituting first equation in \eqref{F1F3H1H3sol} to \eqref{F1F3H2H4re}, again by Lemma \ref{AXDsolle} we obtain that
\begin{equation}\label{H2H4dec2}
 \left(\begin{array}{c}
                H_2 \\
            H_4 \\
          \end{array}
        \right)=\left[I_n-\left({\hat C}\left(
                               \begin{array}{cc}
                                 \Sigma L & \Sigma L \\
                               \end{array}
                             \right)\right)^{(1)}{\hat C}\left(
                               \begin{array}{cc}
                                 \Sigma L & \Sigma L \\
                               \end{array}
                             \right)
         \right]Z,
\end{equation}
where ${\hat C}=S\left(
             \begin{array}{c}
                X_{1} \\
               0\\
             \end{array}
           \right)P^{-1}$ and
$Z\in\mathbb{C}^{n\times (n-r)}$ is arbitrary. Therefore, applying  \eqref{F1F3H1H3sol} and \eqref{H2H4dec2} to    \eqref{BHSdec} and \eqref{CHSdec},  we infer that \eqref{Bdeco2} and \eqref{Cdeco2} hold. Conversely, the ``if" part is easy.
\end{proof}
\par
It has also drawn a lot of interest to characterize the generalized inverse by using a specific rank equation (see \cite{rankeqchaMPre,rankatsre,solutionDinversere}). At the end of this section, we turn our attention on this consideration.

\begin{theorem}
Let $A\in\mathbb{C}^{m\times n}_r$ with ${\rm rank}({A^{\sim}}AA^{\sim})={\rm rank}(A)$. Then there exist a unique matrix $X\in\mathbb{C}^{n\times n}$ such that
\begin{equation}\label{Xconditioneq}
  AX=0,X^{\sim}=X, X^2=X, {\rm rank}(X)=n-r,
\end{equation}
a unique matrix $Y\in\mathbb{C}^{m\times m}$ such that
\begin{equation}\label{Yconditioneq}
  YA=0,Y^{\sim}=Y, Y^2=Y, {\rm rank}(Y)=m-r,
\end{equation}
and a unique matrix $Z\in\mathbb{C}^{n\times m}$ such that
\begin{equation}\label{rankeqXYZeq}
{\rm rank}\left(
            \begin{array}{cc}
              A & I_m-Y \\
              I_n-X & Z \\
            \end{array}
          \right)={\rm rank}(A).
\end{equation}
Furthermore,  $X=I_n-A^{\mathfrak{m}}A$, $Y=I_m-A A^{\mathfrak{m}}$ and  $Z=A^{\mathfrak{m}}$.
\end{theorem}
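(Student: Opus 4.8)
The plan is to first record that the hypothesis ${\rm rank}(A^{\sim}AA^{\sim})={\rm rank}(A)$ guarantees, by the equivalence of items \eqref{minMPexistATSitem0} and \eqref{minMPexistATSitem3} in Theorem \ref{mMPexitscondition}, that $A^{\mathfrak{m}}$ exists; consequently all four defining equations of $A^{\mathfrak{m}}$ together with Lemma \ref{mMPprole} are available. I would then handle the three matrices $X$, $Y$, $Z$ in turn, in each case producing the proposed explicit form, verifying the stated conditions, and then proving uniqueness. Throughout I will use the elementary facts that $(\cdot)^{\sim}$ is an anti-multiplicative involution and that $I^{\sim}=I$ (since the metric squares to the identity).

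For $X$ I would take $X=I_n-A^{\mathfrak{m}}A$ and check \eqref{Xconditioneq}: $AX=A-AA^{\mathfrak{m}}A=0$; idempotency of $X$ follows from that of the projector $A^{\mathfrak{m}}A$; ${\rm rank}(X)=n-{\rm rank}(A^{\mathfrak{m}}A)=n-r$; and $X^{\sim}=X$ follows from $I_n^{\sim}=I_n$ together with $(A^{\mathfrak{m}}A)^{\sim}=A^{\mathfrak{m}}A$. For uniqueness, suppose $X_1,X_2$ both satisfy \eqref{Xconditioneq}. From $AX_i=0$ and ${\rm rank}(X_i)=n-r={\rm dim}(\mathcal{N}(A))$ I obtain $\mathcal{R}(X_i)=\mathcal{N}(A)$; since each $X_i$ is idempotent and acts as the identity on its own range, $X_1X_2=X_2$ and $X_2X_1=X_1$. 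Taking the Minkowski adjoint of $X_1X_2=X_2$ and using $X_i^{\sim}=X_i$ gives $X_2X_1=X_2$; combined with $X_2X_1=X_1$ this yields $X_1=X_2$.

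The matrix $Y=I_m-AA^{\mathfrak{m}}$ is treated symmetrically: $YA=0$, idempotency and ${\rm rank}(Y)=m-r$ are immediate, and $Y^{\sim}=Y$ uses $(AA^{\mathfrak{m}})^{\sim}=AA^{\mathfrak{m}}$. Here $\mathcal{N}(Y_i)=\mathcal{R}(A)$, so $I-Y_i$ is the projector onto $\mathcal{R}(A)=\mathcal{N}(Y_j)$, giving $Y_jY_i=Y_j$ and then $Y_1=Y_2$ by the same adjoint argument. For $Z$ I would observe that, with $X$ and $Y$ fixed, $I_n-X=A^{\mathfrak{m}}A$ and $I_m-Y=AA^{\mathfrak{m}}$, so \eqref{rankeqXYZeq} is exactly the rank equation of Lemma \ref{raneconle} with $B=AA^{\mathfrak{m}}$ and $C=A^{\mathfrak{m}}A$. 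I then verify its two range hypotheses: $\mathcal{R}(AA^{\mathfrak{m}})=\mathcal{R}(A)\subseteq\mathcal{R}(A)$, and $\mathcal{R}((A^{\mathfrak{m}}A)^{*})=\mathcal{N}(A^{\mathfrak{m}}A)^{\perp}=\mathcal{N}(A)^{\perp}=\mathcal{R}(A^{*})$, the middle equality coming from $A^{\mathfrak{m}}A=P_{\mathcal{R}(A^{\sim}),\mathcal{N}(A)}$ in item \eqref{AmAMPeq} of Lemma \ref{mMPprole}. Lemma \ref{raneconle} then delivers a unique $Z=(A^{\mathfrak{m}}A)A^{\dag}(AA^{\mathfrak{m}})$, which collapses to $A^{\mathfrak{m}}$ via $AA^{\dag}A=A$ and $A^{\mathfrak{m}}AA^{\mathfrak{m}}=A^{\mathfrak{m}}$.

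The main obstacle I anticipate is the uniqueness half for $X$ and $Y$: the listed conditions only fix the range (respectively the null space) of the idempotent, and one must invoke Minkowski self-adjointness to pin down the complementary subspace. The cleanest route avoids computing $\mathcal{N}(X)$ explicitly in terms of the metric $G$; instead it exploits the purely algebraic identities $X_1X_2=X_2$, $X_2X_1=X_1$ and then applies the anti-multiplicative involution, so that the only structural inputs are that $(\cdot)^{\sim}$ reverses products and fixes the identity. Everything else is routine verification.
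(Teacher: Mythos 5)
Your proposal is correct, and for the matrix $Z$ it coincides with the paper's proof: both of you check $\mathcal{R}(AA^{\mathfrak{m}})\subseteq\mathcal{R}(A)$ and $\mathcal{R}((A^{\mathfrak{m}}A)^*)=\mathcal{N}(A)^{\perp}=\mathcal{R}(A^*)$ and then invoke Gro{\ss}'s rank-equation result (Lemma \ref{raneconle}) to obtain the unique solution $Z=(A^{\mathfrak{m}}A)A^{\dag}(AA^{\mathfrak{m}})=A^{\mathfrak{m}}$. Where you genuinely diverge is the uniqueness argument for $X$ and $Y$. The paper uses $AX=0$ together with $X^{\sim}=X$ at the subspace level: these give $\mathcal{R}(X)\subseteq\mathcal{N}(A)$ and $\mathcal{R}(A^{\sim})\subseteq\mathcal{N}(X)$, the rank condition upgrades both inclusions to equalities, and idempotency then forces $X=P_{\mathcal{N}(A),\mathcal{R}(A^{\sim})}=I_n-A^{\mathfrak{m}}A$ via Lemma \ref{mMPprole}, uniqueness coming from the fact that an idempotent is determined by its range and null space. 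You instead pin down only $\mathcal{R}(X_i)=\mathcal{N}(A)$, deduce $X_1X_2=X_2$ and $X_2X_1=X_1$ from idempotency, and close by applying the involution to get $X_2X_1=X_2$, hence $X_1=X_2$; this is the standard ``two $\sim$-self-adjoint idempotents with the same range coincide'' argument. Your route never needs to identify $\mathcal{N}(X)$ with $\mathcal{R}(A^{\sim})$ and would transfer verbatim to any ring with involution, whereas the paper's subspace computation has the side benefit of exhibiting $X$ explicitly as the projector $P_{\mathcal{N}(A),\mathcal{R}(A^{\sim})}$, which is what connects it to Lemma \ref{mMPprole}. Note also that your explicit verification that $I_n-A^{\mathfrak{m}}A$ and $I_m-AA^{\mathfrak{m}}$ satisfy the four conditions is not redundant: it supplies the existence half of the claim, which the paper's proof leaves largely implicit.
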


\begin{proof}
From $AX=0 $ and $X^{\sim}=X$, we have $\mathcal{R}(X)\subseteq \mathcal{N}(A)$ and $\mathcal{R}(A^{\sim})\subseteq \mathcal{N}(X)$, which, together with ${\rm rank}(X)=n-r$, show $\mathcal{R}(X)= \mathcal{N}(A)$ and $\mathcal{N}(X)=\mathcal{R}(A^{\sim})$. Hence, by $X^2=X$ and the item \eqref{AmAMPeq} in  Lemma \ref{mMPprole}, it follows that the unique solution of \eqref{Xconditioneq} is $X=P_{\mathcal{N}(A),\mathcal{R}(A^{\sim})}=I_n-A^{\mathfrak{m}}A$.
Analogously, we can have that $Y=I_m-A A^{\mathfrak{m}}$ is the unique matrix satisfying \eqref{Yconditioneq}.
Next, it is clear that $\mathcal{R}(I_m-Y) = \mathcal{R}(A)$ and $\mathcal{R}(I_n-X^*)=\mathcal{R}(A^*)$. Thus, applying Lemma \ref{raneconle}, we have that $Z=(I_n-X)A^{\dag}(I_m-Y)= A^{\mathfrak{m}}AA^{\dag}A A^{\mathfrak{m}}= A^{\mathfrak{m}}$ is the unique matrix such that  \eqref{rankeqXYZeq}.
\end{proof}

\section{New  representations of the Minkowski inverse}
\label{generalizingZminMPsec}
Zlobec \cite{ZlobecMPref} established an explicit form of the Moore-Penrose inverse,   also known as Zlobec formula, that is, $A^{\dag}=A^*(A^*AA^*)^{(1)}A^*$,  where $A\in\mathbb{C}^{m\times n}$ and $(A^*AA^*)^{(1)}\in (A^*AA^*)\{1\}$. In this section, we first  present a  more general representation of the Minkowski inverse similar to Zlobec formula.

\begin{theorem}\label{genzlobecmMpth}
Let $A\in\mathbb{C}^{m\times n}$ be such that ${\rm rank}(AA^{\sim})={\rm rank}(A^{\sim}A)={\rm rank}(A)$. Then
\begin{equation*}
A^{\mathfrak{m}}=(A^{\sim}A)^{k}A^{\sim}[(A^{\sim}A)^{k+l+1}A^{\sim}]^{(1)}(A^{\sim}A)^{l}A^{\sim},
\end{equation*}
where $k$ and $l$ are arbitrary nonnegative integers, and $[(A^{\sim}A)^{k+l+1}A^{\sim}]^{(1)}\in [(A^{\sim}A)^{k+l+1}A^{\sim}]\{1\}$.
\end{theorem}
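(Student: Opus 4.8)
The plan is to recognize the asserted expression as an instance of the Urquhart formula (Lemma \ref{Urquhartth}) and then identify the resulting $\{1,2\}$-inverse with the Minkowski inverse through Remark \ref{minMPATS2rem}. First I would set $U=(A^{\sim}A)^{k}A^{\sim}$ and $V=(A^{\sim}A)^{l}A^{\sim}$. A direct multiplication gives $VAU=(A^{\sim}A)^{l}A^{\sim}A(A^{\sim}A)^{k}A^{\sim}=(A^{\sim}A)^{k+l+1}A^{\sim}$, so the right-hand side of the claimed identity is precisely $U(VAU)^{(1)}V$ with $(VAU)^{(1)}\in(VAU)\{1\}$. Hence Lemma \ref{Urquhartth} will apply as soon as the rank condition ${\rm rank}(VAU)={\rm rank}(U)={\rm rank}(V)={\rm rank}(A)$ is verified.

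To prepare for that verification, I would record two consequences of the hypothesis ${\rm rank}(AA^{\sim})={\rm rank}(A^{\sim}A)={\rm rank}(A)$: it guarantees that $A^{\mathfrak{m}}$ exists (Lemma \ref{minMPexistconlem}) and, by \eqref{indAAeq}, that ${\rm Ind}(A^{\sim}A)=1$. The index-one property yields ${\rm rank}((A^{\sim}A)^{j})={\rm rank}(A^{\sim}A)={\rm rank}(A)$ and $\mathcal{R}((A^{\sim}A)^{j})=\mathcal{R}(A^{\sim}A)$ for every $j\ge 1$. The rank condition then follows from the sandwich ${\rm rank}(A)={\rm rank}((A^{\sim}A)^{k+1})={\rm rank}((A^{\sim}A)^{k}A^{\sim}A)\le{\rm rank}((A^{\sim}A)^{k}A^{\sim})\le{\rm rank}(A^{\sim})={\rm rank}(A)$, which forces ${\rm rank}(U)={\rm rank}(A)$; the identical computation applied to $V$ and to $VAU=(A^{\sim}A)^{k+l+1}A^{\sim}$ gives ${\rm rank}(V)={\rm rank}(VAU)={\rm rank}(A)$. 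Lemma \ref{Urquhartth} therefore delivers $X=A^{(1,2)}_{\mathcal{R}(U),\mathcal{N}(V)}$.

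It then remains to identify the defining subspaces. For the range, $\mathcal{R}((A^{\sim}A)^{k}A^{\sim})\subseteq\mathcal{R}((A^{\sim}A)^{k})=\mathcal{R}(A^{\sim})$ for $k\ge 1$ (the last equality because $\mathcal{R}(A^{\sim}A)\subseteq\mathcal{R}(A^{\sim})$ together with equal ranks forces $\mathcal{R}(A^{\sim}A)=\mathcal{R}(A^{\sim})$), and the rank equality already established upgrades this inclusion to $\mathcal{R}(U)=\mathcal{R}(A^{\sim})$, the case $k=0$ being trivial. For the null space, $\mathcal{N}(A^{\sim})\subseteq\mathcal{N}((A^{\sim}A)^{l}A^{\sim})=\mathcal{N}(V)$ is immediate, and the dimension count $\dim\mathcal{N}(V)=m-{\rm rank}(V)=m-{\rm rank}(A)=\dim\mathcal{N}(A^{\sim})$ upgrades it to $\mathcal{N}(V)=\mathcal{N}(A^{\sim})$. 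Substituting these into $X=A^{(1,2)}_{\mathcal{R}(U),\mathcal{N}(V)}$ gives $X=A^{(1,2)}_{\mathcal{R}(A^{\sim}),\mathcal{N}(A^{\sim})}=A^{\mathfrak{m}}$ by \eqref{minMPATS2}, finishing the argument.

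The step I expect to be the main obstacle is the rank and subspace bookkeeping: one must use the index-one property of $A^{\sim}A$ in an essential way to see that ${\rm rank}((A^{\sim}A)^{k}A^{\sim})$ is independent of $k$, and one must check that the inclusion-plus-equal-rank (respectively, inclusion-plus-equal-dimension) arguments legitimately pin down both $\mathcal{R}(U)$ and $\mathcal{N}(V)$. Everything else reduces to the single matrix identity $VAU=(A^{\sim}A)^{k+l+1}A^{\sim}$ and a direct appeal to the Urquhart formula.
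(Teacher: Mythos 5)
Your proposal is correct and takes essentially the same approach as the paper: check that ${\rm rank}(U)={\rm rank}(V)={\rm rank}(VAU)={\rm rank}(A)$ for $U=(A^{\sim}A)^{k}A^{\sim}$ and $V=(A^{\sim}A)^{l}A^{\sim}$, apply the Urquhart formula (Lemma \ref{Urquhartth}), and identify the resulting $A^{(1,2)}_{\mathcal{R}(U),\mathcal{N}(V)}$ with $A^{\mathfrak{m}}=A^{(1,2)}_{\mathcal{R}(A^{\sim}),\mathcal{N}(A^{\sim})}$ via \eqref{minMPATS2}. The only minor differences are that the paper proves ${\rm rank}((A^{\sim}A)^{s})={\rm rank}(A)$ by induction using the rank-of-product dimension formula and $\mathcal{R}(A^{\sim})\cap\mathcal{N}(A)=\{0\}$, whereas you invoke the index-one property \eqref{indAAeq}, and that you spell out the subspace identifications $\mathcal{R}(U)=\mathcal{R}(A^{\sim})$ and $\mathcal{N}(V)=\mathcal{N}(A^{\sim})$ which the paper leaves implicit in its final chain of equalities.
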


\begin{proof}
First we  use induction on an arbitrary positive integer $s$ to prove that
 ${\rm rank}((A^{\sim}A)^{s})={\rm rank}(A)$. Clearly, ${\rm rank}(A^{\sim}A)={\rm rank}(A)$. Suppose that ${\rm rank}((A^{\sim}A)^{s})={\rm rank}(A)$. Since $\mathcal{R}(A^{\sim})\cap \mathcal{N}(A)=\{0\}$ from ${\rm rank}(AA^{\sim})={\rm rank}(A)$, we infer that
\begin{align*}
{\rm rank}((A^{\sim}A)^{s+1})&={\rm rank}(A^{\sim}A(A^{\sim}A)^{s})={\rm rank}((A^{\sim}A)^{s})-{\rm dim}(\mathcal{R}((A^{\sim}A)^{s})\cap \mathcal{N}(A^{\sim}A))\\
&={\rm rank}(A)-{\rm dim}(\mathcal{R}(A^{\sim})\cap \mathcal{N}(A))={\rm rank}(A),
\end{align*}
which  completes the induction. Hence,  for an arbitrary nonnegative integer $k$, we have that
\begin{equation*}
{\rm rank}(A)={\rm rank}((A^{\sim}A)^{k+1})\leq{\rm rank}((A^{\sim}A)^{k}A^{\sim})\leq {\rm rank}(A),
\end{equation*}
which implies ${\rm rank}((A^{\sim}A)^{k}A^{\sim})= {\rm rank}(A)$. Thus,  ${\rm rank}((A^{\sim}A)^{k+l+1}A^{\sim})={\rm rank}((A^{\sim}A)^{k}A^{\sim})={\rm rank}((A^{\sim}A)^{l}A^{\sim})={\rm rank}(A)$, where $l$ is an  an arbitrary nonnegative integer.
Therefore,  by Lemma \ref{Urquhartth} and \eqref{minMPATS2} in Remark \ref{minMPATS2rem}, it follows that
\begin{align*}
(A^{\sim}A)^{k}A^{\sim}[(A^{\sim}A)^{k+l+1}A^{\sim}]^{(1)}(A^{\sim}A)^{l}A^{\sim}   &=A^{(1,2)}_{\mathcal{R}((A^{\sim}A)^{k}A^{\sim}),\mathcal{N}((A^{\sim}A)^{l}A^{\sim})}\\
   &=A^{(1,2)}_{\mathcal{R}(A^{\sim}),\mathcal{N}(A^{\sim})}\\
   &=A^{\mathfrak{m}},
\end{align*}
where $[(A^{\sim}A)^{k+l+1}A^{\sim}]^{(1)}\in [(A^{\sim}A)^{k+l+1}A^{\sim}]\{1\}$. This now completes the proof.
\end{proof}
\par
Under the hypotheses of Theorem \ref{genzlobecmMpth}, when $k=l=0$, we directly give an explicit expression of the Minkowski inverse in the following corollary.
It is worth mentioning that this result can also be obtained by applying Lemma \ref{Urquhartth} to \eqref{minMPATS2} in Remark \ref{minMPATS2rem}.

\begin{corollary}\label{mMPzlobecremark}
Let $A\in\mathbb{C}^{m\times n}$ be such that ${\rm rank}(AA^{\sim})={\rm rank}(A^{\sim}A)={\rm rank}(A)$. Then
\begin{equation}\label{mMPzlobeceq}
A^{\mathfrak{m}}=A^{\sim}(A^{\sim}AA^{\sim})^{(1)}A^{\sim},
\end{equation}
where $(A^{\sim}AA^{\sim})^{(1)}\in (A^{\sim}AA^{\sim})\{1\}$.
\end{corollary}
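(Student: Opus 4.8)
The plan is to obtain \eqref{mMPzlobeceq} as the trivial specialization $k=l=0$ of Theorem \ref{genzlobecmMpth}, and—to keep the corollary self-contained—to reprove it directly via Urquhart's formula, which is the route I would actually write out. Specializing is immediate: putting $k=l=0$ collapses both factors $(A^{\sim}A)^{k}$ and $(A^{\sim}A)^{l}$ to $I_n$, the middle bracket becomes $[(A^{\sim}A)^{1}A^{\sim}]^{(1)}=(A^{\sim}AA^{\sim})^{(1)}$, and the formula of Theorem \ref{genzlobecmMpth} reduces at once to $A^{\mathfrak{m}}=A^{\sim}(A^{\sim}AA^{\sim})^{(1)}A^{\sim}$. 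Nothing further is required once the theorem is available.

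For the independent verification I would invoke Lemma \ref{Urquhartth} with the choice $U=V=A^{\sim}$, so that $VAU=A^{\sim}AA^{\sim}$ and $X=U(VAU)^{(1)}V=A^{\sim}(A^{\sim}AA^{\sim})^{(1)}A^{\sim}$ is exactly the right-hand side of \eqref{mMPzlobeceq}. Since $A^{\sim}=GA^{*}F$ with $G$ and $F$ nonsingular Minkowski metric matrices, one has ${\rm rank}(U)={\rm rank}(V)={\rm rank}(A^{\sim})={\rm rank}(A)$. The only remaining rank condition in Urquhart's formula is ${\rm rank}(VAU)={\rm rank}(A^{\sim}AA^{\sim})={\rm rank}(A)$, which follows from the standing hypothesis ${\rm rank}(AA^{\sim})={\rm rank}(A^{\sim}A)={\rm rank}(A)$ through the equivalence \eqref{minMPexistATSitem1}$\Leftrightarrow$\eqref{minMPexistATSitem3} of Theorem \ref{mMPexitscondition}. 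Lemma \ref{Urquhartth} then gives $X=A^{(1,2)}_{\mathcal{R}(A^{\sim}),\mathcal{N}(A^{\sim})}$, and comparison with \eqref{minMPATS2} in Remark \ref{minMPATS2rem} yields $X=A^{\mathfrak{m}}$.

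I anticipate essentially no obstacle here, since the statement is a corollary by construction. The single point deserving care is verifying ${\rm rank}(A^{\sim}AA^{\sim})={\rm rank}(A)$; this is not an additional assumption but a recorded consequence of the hypothesis via Theorem \ref{mMPexitscondition}. Finally, the apparent dependence on the particular choice of $(A^{\sim}AA^{\sim})^{(1)}$ is harmless: the "if and only if" character of Urquhart's formula shows that \emph{every} admissible $\{1\}$-inverse produces the same $A^{(1,2)}_{\mathcal{R}(A^{\sim}),\mathcal{N}(A^{\sim})}=A^{\mathfrak{m}}$, so no separate well-definedness check is needed.
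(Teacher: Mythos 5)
Your proof is correct and takes essentially the same approach as the paper: the paper obtains the corollary precisely as the $k=l=0$ specialization of Theorem \ref{genzlobecmMpth}, and it explicitly remarks that the same formula can also be obtained by applying Lemma \ref{Urquhartth} to \eqref{minMPATS2} in Remark \ref{minMPATS2rem} --- exactly the two routes you wrote out, with the rank condition ${\rm rank}(A^{\sim}AA^{\sim})={\rm rank}(A)$ supplied by Theorem \ref{mMPexitscondition} as you indicate.
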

\par
Another corollary of Theorem \ref{genzlobecmMpth} given below  shows a different representation of the Minkowski inverse.

\begin{corollary}\label{otherrepminicor}
Let $A\in\mathbb{C}^{m\times n}$ be such that ${\rm rank}(AA^{\sim})={\rm rank}(A^{\sim}A)={\rm rank}(A)$. Then
\begin{equation*}
A^{\mathfrak{m}}=(A^{\sim}A)^{k}A^{\sim}\left[(AA^{\sim})^{k+1}\right]^{(1)} A\left[(A^{\sim}A)^{l+1}\right]^{(1)}(A^{\sim}A)^lA^{\sim},
\end{equation*}
where $k$ and $l$ are arbitrary nonnegative integers,
$\left[(AA^{\sim})^{k+1}\right]^{(1)} \in \left[(AA^{\sim})^{k+1}\right]\{1\}$,  and $\left[(A^{\sim}A)^{l+1}\right]^{(1)}\in \left[(A^{\sim}A)^{l+1}\right]\{1\}$.
\end{corollary}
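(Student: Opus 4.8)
The plan is to deduce this directly from Theorem \ref{genzlobecmMpth} by recognizing the inner factor $\left[(AA^{\sim})^{k+1}\right]^{(1)} A\left[(A^{\sim}A)^{l+1}\right]^{(1)}$ as one particular choice of $\{1\}$-inverse of $(A^{\sim}A)^{k+l+1}A^{\sim}$. Writing $N = \left[(AA^{\sim})^{k+1}\right]^{(1)} A\left[(A^{\sim}A)^{l+1}\right]^{(1)}$ and $P = (A^{\sim}A)^{k+l+1}A^{\sim}$, the target expression is exactly $(A^{\sim}A)^{k}A^{\sim}\, N\, (A^{\sim}A)^l A^{\sim}$, so once I show $PNP = P$ the formula of Theorem \ref{genzlobecmMpth} delivers the result immediately.

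To verify $PNP = P$, first I would record the commutation identity $A^{\sim}(AA^{\sim})^{k+1} = (A^{\sim}A)^{k+1}A^{\sim}$, which lets me rewrite $P = (A^{\sim}A)^l A^{\sim}(AA^{\sim})^{k+1}$. Substituting this form on the left and using that $(AA^{\sim})^{k+1}\left[(AA^{\sim})^{k+1}\right]^{(1)}$ is idempotent reduces the product to an expression containing $A^{\sim}(AA^{\sim})^{k+1}\left[(AA^{\sim})^{k+1}\right]^{(1)}A$. The decisive simplification is $(AA^{\sim})^{k+1}\left[(AA^{\sim})^{k+1}\right]^{(1)}A = A$, which holds because $(AA^{\sim})^{k+1}\left[(AA^{\sim})^{k+1}\right]^{(1)}$ is a projector onto $\mathcal{R}((AA^{\sim})^{k+1})$ and this range equals $\mathcal{R}(A)$. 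After this collapse the middle of the product becomes $(A^{\sim}A)^{l+1}\left[(A^{\sim}A)^{l+1}\right]^{(1)} P$, and since $(A^{\sim}A)^{l+1}\left[(A^{\sim}A)^{l+1}\right]^{(1)}$ is a projector onto $\mathcal{R}((A^{\sim}A)^{l+1}) = \mathcal{R}(A^{\sim}) = \mathcal{R}(P)$, it acts as the identity on $P$, giving $PNP = P$.

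The main obstacle is justifying the two range equalities, namely $\mathcal{R}((AA^{\sim})^{k+1}) = \mathcal{R}(A)$ and $\mathcal{R}((A^{\sim}A)^{l+1}) = \mathcal{R}(A^{\sim}) = \mathcal{R}(P)$. The inclusions $\mathcal{R}((AA^{\sim})^{k+1}) \subseteq \mathcal{R}(A)$ and $\mathcal{R}((A^{\sim}A)^{l+1}) \subseteq \mathcal{R}(A^{\sim})$ are immediate, so everything rests on the corresponding rank equalities. For the powers of $A^{\sim}A$ these were already established by the induction inside the proof of Theorem \ref{genzlobecmMpth}, where ${\rm rank}((A^{\sim}A)^s) = {\rm rank}(A)$ and ${\rm rank}((A^{\sim}A)^{s}A^{\sim}) = {\rm rank}(A)$ were shown for every nonnegative integer $s$; the equality $\mathcal{R}(P) = \mathcal{R}(A^{\sim})$ follows from the latter together with ${\rm rank}(A^{\sim}) = {\rm rank}(A)$. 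For the powers of $AA^{\sim}$ I would run the same induction with the roles of $A$ and $A^{\sim}$ interchanged, which is legitimate because the hypothesis ${\rm rank}(AA^{\sim}) = {\rm rank}(A^{\sim}A) = {\rm rank}(A)$ is symmetric under $A \mapsto A^{\sim}$. With both range identities in hand, the computation of the preceding paragraph goes through and the corollary follows.
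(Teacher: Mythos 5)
Your proposal is correct, and it takes a genuinely different route from the paper's. You exploit the freedom built into Theorem \ref{genzlobecmMpth} --- its formula holds for an \emph{arbitrary} $\{1\}$-inverse of $P=(A^{\sim}A)^{k+l+1}A^{\sim}$ --- and reduce the corollary to the single verification $PNP=P$ for $N=\left[(AA^{\sim})^{k+1}\right]^{(1)} A\left[(A^{\sim}A)^{l+1}\right]^{(1)}$, which you carry out via the commutation $A^{\sim}(AA^{\sim})^{k+1}=(A^{\sim}A)^{k+1}A^{\sim}$, the idempotency of $MM^{(1)}$, and the range equality $\mathcal{R}\left((AA^{\sim})^{k+1}\right)=\mathcal{R}(A)$, the latter justified by rerunning the paper's induction with $A$ and $A^{\sim}$ interchanged (legitimate, as you note, since the hypothesis together with ${\rm rank}(A^{\sim})={\rm rank}(A)$ is symmetric under $A\mapsto A^{\sim}$). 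The paper instead derives the formula by a chain of identities: it starts from the theorem's expression with a generic $\{1\}$-inverse of $P$, sandwiches it between $A^{\mathfrak{m}}A$ and $AA^{\mathfrak{m}}$ (which act as identities on the relevant ranges by Lemma \ref{mMPprole}), inserts factors of the form $N_0\left[MN_0\right]^{(1)}MN_0=N_0$ justified by the same rank equalities, recognizes the resulting middle block as $AA^{\mathfrak{m}}A=A$ via the theorem once more, and finally collapses $A(A^{\sim}A)^{k}A^{\sim}=(AA^{\sim})^{k+1}$ and $(A^{\sim}A)^{l}A^{\sim}A=(A^{\sim}A)^{l+1}$. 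Your argument is the more economical and conceptual of the two: one algebraic check plus a citation, with no need for the properties of $A^{\mathfrak{m}}$ from Lemma \ref{mMPprole}; indeed your final step needs only the trivial inclusion $\mathcal{R}(P)\subseteq\mathcal{R}\left((A^{\sim}A)^{l+1}\right)$ rather than the full equality $\mathcal{R}\left((A^{\sim}A)^{l+1}\right)=\mathcal{R}(A^{\sim})$, since $P=(A^{\sim}A)^{l+1}(A^{\sim}A)^{k}A^{\sim}$ and the defining property of the $\{1\}$-inverse already gives $(A^{\sim}A)^{l+1}\left[(A^{\sim}A)^{l+1}\right]^{(1)}(A^{\sim}A)^{l+1}=(A^{\sim}A)^{l+1}$. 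What the paper's computational route buys is that it never has to identify $N$ as a $\{1\}$-inverse at all: it manufactures the new representation directly from the old one. Both proofs rest on the same two pillars, Theorem \ref{genzlobecmMpth} and the rank equalities for powers of $A^{\sim}A$ and $AA^{\sim}$.
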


\begin{proof}
Using Theorem \ref{genzlobecmMpth} and Lemma \ref{mMPprole}, we have that
\begin{align*}
A^{\mathfrak{m}}=&(A^{\sim}A)^{k}A^{\sim}\left[(A^{\sim}A)^{k+l+1}A^{\sim}\right]^{(1)}(A^{\sim}A)^{l}A^{\sim}\\
=&A^{\mathfrak{m}}A(A^{\sim}A)^{k}A^{\sim}\left[(A^{\sim}A)^{k+l+1}A^{\sim}\right]^{(1)}(A^{\sim}A)^{l}A^{\sim}AA^{\mathfrak{m}}\\
=&A^{\mathfrak{m}}A(A^{\sim}A)^{k}A^{\sim}\left[A(A^{\sim}A)^{k}A^{\sim}\right]^{(1)}A(A^{\sim}A)^{k}A^{\sim}\left[(A^{\sim}A)^{k+l+1}A^{\sim}\right]^{(1)}\\ &(A^{\sim}A)^{l}A^{\sim}A\left[(A^{\sim}A)^{l}A^{\sim}A\right]^{(1)}(A^{\sim}A)^{l}A^{\sim}AA^{\mathfrak{m}}\\
=&A^{\mathfrak{m}}A(A^{\sim}A)^{k}A^{\sim}\left[A(A^{\sim}A)^{k}A^{\sim}\right]^{(1)}A\left[(A^{\sim}A)^{l}A^{\sim}A\right]^{(1)}(A^{\sim}A)^{l}A^{\sim}AA^{\mathfrak{m}}\\
=&(A^{\sim}A)^{k}A^{\sim}\left[(AA^{\sim})^{k+1}\right]^{(1)} A\left[(A^{\sim}A)^{l+1}\right]^{(1)}(A^{\sim}A)^lA^{\sim},
\end{align*}
which completes the proof.
\end{proof}

\begin{remark}
Under the hypotheses of Corollary \ref{otherrepminicor}, when $k=l=0$, we have immediately  \cite[Theorem 5]{indef}, that is,
\begin{equation*}
A^{\mathfrak{m}}=A^{\sim}(AA^{\sim})^{(1)}A(A^{\sim}A)^{(1)}A^{\sim},
\end{equation*}
where $(AA^{\sim})^{(1)}\in(AA^{\sim})\{1\}$ and $(A^{\sim}A)^{(1)}\in (A^{\sim}A)\{1\}$.
\end{remark}
\par
This section concludes with  showing  the Minkowski inverse of a class of block matrices  by using  Corollary \ref{mMPzlobecremark}, which extends \cite[Corollary 1]{ZlobecMPref} to the Minkowski inverse.

\begin{theorem}
Let $A\in\mathbb{C}^{m\times n}_r$ be such that ${\rm rank}(AA^{\sim})={\rm rank}(A^{\sim}A)={\rm rank}(A)$ and
\begin{equation}\label{Ablockeq}
A=\left(
    \begin{array}{cc}
      A_{1} & A_{2} \\
      A_{3} & A_{4} \\
    \end{array}
  \right),
\end{equation}
where $A_{1}\in\mathbb{C}^{r\times r}$ is  nonsingular, $A_{2}\in\mathbb{C}^{r\times (n-r)}$, $A_{3}\in\mathbb{C}^{(m-r)\times r}$, and $A_{4}\in\mathbb{C}^{(m-r)\times (n-r)}$. Then
\begin{equation}\label{mMpsepmateq}
A^{\mathfrak{m}}=
\left(
  \begin{array}{cc}
   A_{1} & A_{2} \\
  \end{array}
\right)^{\sim}
\left[
\left(
  \begin{array}{c}
   A_{1} \\
    A_{3} \\
  \end{array}
\right)^{\sim}
A
\left(
  \begin{array}{cc}
   A_{1} & A_{2} \\
  \end{array}
\right)^{\sim}
\right]^{-1}
\left(
  \begin{array}{c}
   A_{1} \\
    A_{3} \\
  \end{array}
\right)^{\sim}.
\end{equation}
\end{theorem}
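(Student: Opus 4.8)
The plan is to realise the asserted formula as a single instance of the Urquhart formula (Lemma~\ref{Urquhartth}) applied to the pair $U=P^{\sim}$, $V=R^{\sim}$, where $P=(A_{1}\ A_{2})$ denotes the first block row of $A$ and $R=\left(\begin{smallmatrix}A_{1}\\ A_{3}\end{smallmatrix}\right)$ its first block column, and then to identify the resulting $\{1,2\}$-inverse with $A^{\mathfrak{m}}$ through \eqref{minMPATS2}. First I would record a full rank factorization of $A$. Since $A_{1}$ is nonsingular and ${\rm rank}(A)=r$, a Schur-complement reduction shows that the Schur complement $A_{4}-A_{3}A_{1}^{-1}A_{2}$ has rank zero, i.e. $A_{4}=A_{3}A_{1}^{-1}A_{2}$, whence
\begin{equation*}
A=\left(\begin{array}{c}A_{1}\\ A_{3}\end{array}\right)A_{1}^{-1}(A_{1}\ A_{2})=RA_{1}^{-1}P,
\end{equation*}
with $R\in\mathbb{C}^{m\times r}_{r}$ of full column rank and $P\in\mathbb{C}^{r\times n}_{r}$ of full row rank (both of rank $r$ because $A_{1}$ is invertible). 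As the Minkowski metrics are nonsingular, ${\rm rank}(P^{\sim})={\rm rank}(P)={\rm rank}(R^{\sim})={\rm rank}(R)=r$.

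Next, applying the antihomomorphism property $(XYZ)^{\sim}=Z^{\sim}Y^{\sim}X^{\sim}$ to this factorization gives $A^{\sim}=P^{\sim}(A_{1}^{-1})^{\sim}R^{\sim}$. Because $(A_{1}^{-1})^{\sim}R^{\sim}\in\mathbb{C}^{r\times m}$ has full row rank and $P^{\sim}(A_{1}^{-1})^{\sim}\in\mathbb{C}^{n\times r}$ has full column rank, the nonsingular middle factor can be absorbed, yielding $\mathcal{R}(A^{\sim})=\mathcal{R}(P^{\sim})$ and $\mathcal{N}(A^{\sim})=\mathcal{N}(R^{\sim})$. These are exactly the range and null-space data needed to match $A^{(1,2)}_{\mathcal{R}(P^{\sim}),\mathcal{N}(R^{\sim})}$ against $A^{\mathfrak{m}}=A^{(1,2)}_{\mathcal{R}(A^{\sim}),\mathcal{N}(A^{\sim})}$ from Remark~\ref{minMPATS2rem}.

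The main obstacle is the nonsingularity of the inner matrix $R^{\sim}AP^{\sim}$, that is, the hypothesis ${\rm rank}(R^{\sim}AP^{\sim})={\rm rank}(A)$ demanded by Urquhart's formula. In the Euclidean setting this would be automatic, but in Minkowski space the indefinite Gram-type products $R^{\sim}R$ and $PP^{\sim}$ can drop rank, and this is precisely where the standing assumption enters. Substituting $A=RA_{1}^{-1}P$ gives $R^{\sim}AP^{\sim}=(R^{\sim}R)A_{1}^{-1}(PP^{\sim})$. Using that $R$ (resp.\ $P^{\sim}$) is left-invertible and $R^{\sim}$ (resp.\ $P$) is right-invertible, the rank identities ${\rm rank}(AA^{\sim})={\rm rank}(PP^{\sim})$ and ${\rm rank}(A^{\sim}A)={\rm rank}(R^{\sim}R)$ follow from $AA^{\sim}=R[A_{1}^{-1}(PP^{\sim})(A_{1}^{-1})^{\sim}]R^{\sim}$ and $A^{\sim}A=P^{\sim}[(A_{1}^{-1})^{\sim}(R^{\sim}R)A_{1}^{-1}]P$; the assumption ${\rm rank}(AA^{\sim})={\rm rank}(A^{\sim}A)=r$ then forces both $PP^{\sim}$ and $R^{\sim}R$ to be nonsingular $r\times r$ matrices, so $R^{\sim}AP^{\sim}$ is nonsingular and $(R^{\sim}AP^{\sim})^{(1)}=(R^{\sim}AP^{\sim})^{-1}$.

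Finally, with $U=P^{\sim}$ and $V=R^{\sim}$ all four rank conditions ${\rm rank}(R^{\sim}AP^{\sim})={\rm rank}(P^{\sim})={\rm rank}(R^{\sim})={\rm rank}(A)=r$ hold, so Lemma~\ref{Urquhartth} gives $P^{\sim}(R^{\sim}AP^{\sim})^{-1}R^{\sim}=A^{(1,2)}_{\mathcal{R}(P^{\sim}),\mathcal{N}(R^{\sim})}$. Combining this with $\mathcal{R}(P^{\sim})=\mathcal{R}(A^{\sim})$, $\mathcal{N}(R^{\sim})=\mathcal{N}(A^{\sim})$ and \eqref{minMPATS2} identifies the right-hand side with $A^{\mathfrak{m}}$, which is the claimed formula \eqref{mMpsepmateq}. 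Alternatively one could feed the factorization into the Zlobec-type representation of Corollary~\ref{mMPzlobecremark}, but the Urquhart route keeps the bookkeeping of the indefinite adjoint cleanest.
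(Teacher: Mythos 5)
Your proof is correct, but it takes a genuinely different route from the paper's. Both arguments share the same opening moves: the Schur-complement step forcing $A_4=A_3A_1^{-1}A_2$, hence the factorization $A=RA_1^{-1}P$ with $R=\left(\begin{smallmatrix}A_1\\A_3\end{smallmatrix}\right)$ and $P=\left(\begin{smallmatrix}A_1 & A_2\end{smallmatrix}\right)$, and the fact that the standing rank hypotheses force the $r\times r$ matrices $PP^{\sim}$ and $R^{\sim}R$ to be nonsingular, so that $R^{\sim}AP^{\sim}=(R^{\sim}R)A_1^{-1}(PP^{\sim})$ is invertible. (The paper proves this for $T_1=PA^{\sim}R$, which is exactly $(R^{\sim}AP^{\sim})^{\sim}$, by a dimension count on $\mathcal{R}(A^{\sim})\cap\mathcal{N}(A)$; you transfer ranks through the factorization instead — both are sound.) After that the proofs diverge. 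The paper stays within the theme of its final section: it partitions $A^{\sim}AA^{\sim}$ into blocks, shows by a second Schur-complement argument that $\left(\begin{smallmatrix}(T_1^{\sim})^{-1} & 0\\ 0 & 0\end{smallmatrix}\right)$ is a $\{1\}$-inverse of $A^{\sim}AA^{\sim}$, and substitutes this into the Zlobec-type representation $A^{\mathfrak{m}}=A^{\sim}(A^{\sim}AA^{\sim})^{(1)}A^{\sim}$ of Corollary \ref{mMPzlobecremark}, obtaining \eqref{mMpsepmateq} by direct computation; this costs an extra block manipulation but exhibits an explicit $\{1\}$-inverse of $A^{\sim}AA^{\sim}$, which has some independent interest. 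You instead apply the Urquhart formula (Lemma \ref{Urquhartth}) with $U=P^{\sim}$, $V=R^{\sim}$, verify the four rank equalities, and identify $A^{(1,2)}_{\mathcal{R}(P^{\sim}),\mathcal{N}(R^{\sim})}$ with $A^{\mathfrak{m}}=A^{(1,2)}_{\mathcal{R}(A^{\sim}),\mathcal{N}(A^{\sim})}$ from \eqref{minMPATS2} via $\mathcal{R}(P^{\sim})=\mathcal{R}(A^{\sim})$ and $\mathcal{N}(R^{\sim})=\mathcal{N}(A^{\sim})$. This bypasses all block computations on $A^{\sim}AA^{\sim}$ and is shorter and more conceptual; indeed, since the paper itself notes that Corollary \ref{mMPzlobecremark} follows from Urquhart's formula, your argument effectively short-circuits the corollary and uses the underlying lemma directly.
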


\begin{proof}
Let
\begin{equation}\label{TA1A2AA1A2eq}
  T_{1}=\left(
  \begin{array}{cc}
   A_{1} & A_{2} \\
  \end{array}
\right)
A^{\sim}
\left(
  \begin{array}{c}
   A_{1} \\
    A_{3} \\
  \end{array}
\right).
\end{equation}
Since $A_{1}\in\mathbb{C}^{r\times r}$ is  nonsingular, we have
\begin{align}
{\rm rank}(A)&=
{\rm rank}\left(\left(
    \begin{array}{cc}
      I_{r}& 0 \\
     -A_{3}A_{1}^{-1} & I_{m-r} \\
    \end{array}
  \right)\left(
    \begin{array}{cc}
      A_{1} & A_{2} \\
      A_{3} & A_{4} \\
    \end{array}
  \right)\left(
    \begin{array}{cc}
      I_{r}& -A_{1}^{-1}A_{2} \\
     0 & I_{n-r} \\
    \end{array}
  \right)
  \right)\nonumber\\
  &={\rm rank}
  \left(
    \begin{array}{cc}
      A_{1} & 0 \\
     0 & A_{4}-A_{3}A_{1}^{-1}A_{2} \\
    \end{array}
  \right),\label{A22A21A11A12eq}
\end{align}
which, together with ${\rm rank}(A)={\rm rank}(A_{1})$, gives $A_{4}=A_{3}A_{1}^{-1}A_{2}$. Then, it can  be easily verified that
\begin{equation}\label{T11redeceq}
T_{1}=\left(
  \begin{array}{cc}
   A_{1} & A_{2} \\
  \end{array}
\right)
\left(
  \begin{array}{cc}
   A_{1} & A_{2} \\
  \end{array}
\right)^{\sim}{(A_{1}^{\sim})}^{-1}
\left(
  \begin{array}{c}
   A_{1} \\
    A_{3} \\
  \end{array}
\right)^{\sim}
\left(
  \begin{array}{c}
   A_{1} \\
    A_{3} \\
  \end{array}
\right).
\end{equation}
 It is sufficient to prove that $T_1$ is nonsingular, i.e.,  ${\rm rank}(T_{1})=r$. In fact, since $\mathcal{N}(A)=\mathcal{N}\left(\left(
  \begin{array}{cc}
   A_{1} & A_{2} \\
  \end{array}
\right)\right)$ from the nonsingularity of $A_{1}$, we have $\mathcal{R}(A^{\sim})=\mathcal{R}\left(\left(
  \begin{array}{cc}
   A_{1} & A_{2} \\
  \end{array}\right)^{\sim}\right)$.
  Then, since $\mathcal{R}(A^{\sim})\cap\mathcal{N}(A)=\{0\}$ from ${\rm rank}(AA^{\sim})={\rm rank}(A)$, we infer that
\begin{align}
  &{\rm rank}\left(\left(
  \begin{array}{cc}
   A_{1} & A_{2} \\
  \end{array}
\right)\left(
  \begin{array}{cc}
   A_{1} & A_{2} \\
  \end{array}
\right)^{\sim}\right)\nonumber \\
=&{\rm rank}\left(\left(
  \begin{array}{cc}
   A_{1} & A_{2} \\
  \end{array}
\right)\right)-{\rm dim}\left(\mathcal{R}\left(\left(
  \begin{array}{cc}
   A_{1} & A_{2} \\
  \end{array}\right)^{\sim}\right)\cap\mathcal{N}\left(\left(
  \begin{array}{cc}
   A_{1} & A_{2} \\
  \end{array}
\right)\right)\right)  \nonumber\\
=& {\rm rank}(A_{1})-{\rm dim}\left(\mathcal{R}(A^{\sim})\cap\mathcal{N}(A)\right)=r.\label{rankA11A12}
\end{align}
Analogously, we can obtain that
\begin{equation}\label{rankA11A21}
{\rm rank}\left(\left(
  \begin{array}{c}
   A_{1} \\
    A_{3} \\
  \end{array}
\right)^{\sim}
\left(
  \begin{array}{c}
   A_{1} \\
    A_{3} \\
  \end{array}
\right)\right)=r.
\end{equation}
In terms of \eqref{T11redeceq}, \eqref{rankA11A12} and \eqref{rankA11A21}, it is clear that ${\rm rank}(T_{1})=r$. Then using the item \eqref{minMPexistATSitem3} in Theorem  \ref{mMPexitscondition}, we get ${\rm rank}(T_{1})={\rm rank}(A^{\sim}AA^{\sim})$.
Denote
  \begin{equation*}
\left(
  \begin{array}{cc}
   B_{1} & B_{2} \\
    B_{3}  & B_{4}  \\
  \end{array}
\right)=A^{\sim}AA^{\sim},
  \end{equation*}
where $B_{1}\in\mathbb{C}^{r\times r}$, $B_{2}\in\mathbb{C}^{r\times (m-r)}$, $B_{3}\in\mathbb{C}^{(n-r)\times r}$ and $B_{4}\in\mathbb{C}^{(n-r)\times (m-r)}$.
In view of  \eqref{TA1A2AA1A2eq}, we see that $B_{1}=T_{1}^{\sim}$. Thus, by the same way of \eqref{A22A21A11A12eq}, we have   that $B_{4}=B_{3}(T_{1}^{\sim})^{-1}B_{2}$. Then it is easy to prove that
  \begin{equation}\label{T1AAAAeq}
\left(
  \begin{array}{cc}
    (T_{1}^{\sim})^{-1} & 0 \\
    0 & 0 \\
  \end{array}
\right)\in \left(A^{\sim}AA^{\sim}\right)\{1\}.
  \end{equation}
Substituting \eqref{T1AAAAeq} and \eqref{Ablockeq} to \eqref{mMPzlobeceq} in Corollary \ref{mMPzlobecremark},  we have \eqref{mMpsepmateq} by direct calculation. This completes the proof.
\end{proof}

\section{Conclusion}\label{conlusionsec}
This paper shows some different characterizations and representations of the Minkowski inverse in Minkowski space, mainly by extending some known results of the Moore-Penrose inverse to the Minkowski inverse.
In addition,  we are convinced that  the study of generalized inverses in Minkowski space will maintain its popularity for years to come.
Several possible directions  for further research can be described as follows:
\begin{enumerate}[$(1)$]
  \item It is difficult but interesting to explore the representation of the Minkowski inverse by using core-EP decomposition \cite{coreepdecre}.
  \item  One possibility is to establish more characterizations and representations of the Minkowski inverse in terms of  the results of
      the Moore-Penrose inverse in other mathematical fields introduced in Section \ref{introductionsection}.
  \item As we know,  the study of the Minkowski inverse originates from the simplification of polarized light  problems \cite{minSVDre}. A   meaningful research topic   is to find out new applications of the Minkowski inverse in the study on   polarization of light by using its existing mathematical  results.
\end{enumerate}

\end{document}